\tikzstyle{v} = [circle, draw, inner sep=2pt, minimum size=3pt, fill=black]
\tikzstyle{e} = [fill, opacity=.2, 
    \pgfmathsetlengthmacro\lw{.3pt+.5\pgflinewidth}
      \pgfmathtruncatemacro\dashnum{%
        round((\pgfdecoratedinputsegmentlength-3pt)/6pt)
      }
      \pgfmathsetmacro\dashscale{%
        \pgfdecoratedinputsegmentlength/(\dashnum*6pt + 3pt)
      }
      \pgfmathsetlengthmacro\dashunit{3pt*\dashscale}
\theoremstyle{plain}
\newtheorem{theorem}{Theorem}[section]
\newtheorem{lemma}[theorem]{Lemma}
\newtheorem{proposition}[theorem]{Proposition}
\newtheorem{corollary}[theorem]{Corollary}
\theoremstyle{definition}
\newtheorem{definition}[theorem]{Definition}
\newtheorem{example}[theorem]{Example}
\newtheorem{remark}[theorem]{Remark}
\newtheorem{question}[theorem]{Question}
\DeclareMathOperator{\Der}{Der}
\DeclareMathOperator{\expon}{exp}
\DeclareMathOperator{\Cox}{Cox}
\DeclareMathOperator{\Shi}{Shi}
\DeclareMathOperator{\Cat}{Cat}
 \newcommand \A{{\mathcal A}}
  \newcommand \B{{\mathcal B}}
\title{Freeness of hyperplane arrangements associated with gain graphs}
\author{
Daisuke Suyama
\thanks{
Department of Electronics and Information Engineering, Hokkai-Gakuen University, Hokkaido 062-8605, Japan. 
E-mail: dsuyama@hgu.jp
} \and
Michele Torielli
\thanks{Department of Mathematics \& Statistics, Northern Arizona University, 801 S Osborne Drive, Flagstaff, AZ 86011, USA. 
E-mail: michele.torielli@nau.edu
} \and
Shuhei Tsujie
\thanks{Department of Mathematics, Hokkaido University of Education, Asahikawa, Hokkaido 070-8621, Japan. 
E-mail: tsujie.shuhei@a.hokkyodai.ac.jp}
}
\date{}
\begin{document}
\maketitle
	
\begin{abstract}
Athanasiadis studied arrangements obtained by adding shifted hyperplanes to the braid arrangement. 
Similarly, Bailey studied arrangements obtained by adding tilted hyperplanes to the braid arrangement. 
These two kinds of arrangements are associated with directed graphs and their freeness was characterized in terms of the associated graphs. 
In addition, there is coincidence of freeness. 
Namely, if Athanasiadis' arrangement is free, then the corresponding Bailey's arrangement is free, and vice versa. 

In this paper, we generalize this phenomenon by using gain graphs. 
\end{abstract}

{\footnotesize \textit{Keywords}: 
Hyperplane arrangement, 
Free arrangement,
Finite characteristic,
Signed graph,
Gain graph
}

{\footnotesize \textit{2020 MSC}: 
52C35, 
32S22,  
05C22, 
13N15 
}

\section{Introduction}\label{sec:introduction}

A \textbf{hyperplane arrangement} is a finite collection of affine hyperplanes in a finite dimensional vector space. 
If every hyperplane in an arrangement goes through the origin, we call the arrangement \textbf{central}. 

One of the interesting properties in the study of central hyperplane arrangements is their \textbf{freeness} (see Definition \ref{definition freeness}). 
Although freeness is a highly algebraic property, it is closely related to combinatorial properties. 
Terao's conjecture, whether freeness is determined by the combinatorial data of intersections of hyperplanes, remains open. 
There are two important known classes of free arrangements called \textbf{inductively free} and \textbf{divisionally free} arrangements. 
The conditions for inductive and divisional freeness are combinatorial and hence Terao's conjecture hold for these subclasses.

Another remarkable property of hyperplane arrangements concerning freeness is supersolvability. 
A central arrangement is said to be \textbf{supersolvable} if its intersection poset contains a maximal chain consisting of modular elements (see \cite{stanley1972supersolvable-au} for more details). 
Note that there are the following inclusions for central arrangements (see \cite[Theorem 4.2]{jambu1984free-aim} and \cite[Theorem 4.4(2)]{abe2016divisionally-im} ). 
\begin{align*}
\{\text{supersolvable}\} 
\subsetneq \{\text{inductively free}\}
\subsetneq \{\text{divisionally free}\}
\subsetneq \{\text{free}\}
\end{align*}

Let $\Gamma=([\ell], E_{\Gamma})$ be an acyclic digraph (directed graph) on $[\ell] \coloneqq \{1, \dots, \ell\}$ with directed edge set $E_{\Gamma}$.  
Athanasiadis and Bailey studied (inductive) freeness and supersolvability of $\mathcal{A}(\Gamma)$ and $\mathcal{B}(\Gamma)$ in $\mathbb{C}^{\ell}$ respectively, which are defined by 
\begin{align*}
\mathcal{A}(\Gamma) &\coloneqq \Cox(\ell) \cup \Set{\{x_{i}-x_{j} = 1\} | (i,j) \in E_{\Gamma}}, \\
\mathcal{B}(\Gamma) &\coloneqq \Cox(\ell) \cup \Set{\{x_{i} = 0\} | i \in [\ell]} \cup \Set{\{x_{i}-q x_{j} = 0\} | (i,j) \in E_{\Gamma}}, 
\end{align*}
where $q$ is a fixed element in $\mathbb{C}^{\times}$ which is not a root of unity and $\Cox(\ell)$ denotes the \textbf{Coxeter arrangement of type $\mathrm{A}_{\ell-1}$} (or the \textbf{braid arrangement}) defined by 
\begin{align*}
\Cox(\ell) \coloneqq \Set{\{x_{i}-x_{j} = 0\} | 1 \leq i < j \leq \ell}. 
\end{align*}

Note that when $E_{\Gamma} = \Set{(i,j) | 1 \leq i < j \leq \ell }$ the arrangement $\mathcal{A}(\Gamma)$ is called the \textbf{Shi arrangement} and if $E_{\Gamma}$ a subset of $\Set{(i,j) | 1 \leq i < j \leq \ell }$, then $\mathcal{A}(\Gamma)$ interpolates between the Coxeter arrangement $\Cox(\ell)$ and the Shi arrangement. 

Given an arrangement $\mathcal{A}$ in $\ell$-dimensional space, we can obtain its \textbf{cone} $\mathbf{c}\mathcal{A}$ in a $(\ell+1)$-dimensional space by adding the hyperplane at infinity and homogenizing all the defining equations of the hyperplanes in $\mathcal{A}$. 
For example 
\begin{align*}
\mathbf{c}\mathcal{A}(\Gamma) &\coloneqq \{z=0\} \cup \Set{\{x_{i}-x_{j} = 0\} | 1 \leq i < j \leq \ell} \cup \Set{\{x_{i}-x_{j} = z\} | (i,j) \in E_{\Gamma}},
\end{align*}
where $z$ denotes an additional coordinate and $\{z=0\}$ is the hyperplane at infinity. 

Athanasiadis and Bailey characterized freeness and supersolvability of $\mathbf{c}\mathcal{A}(\Gamma)$ and $\mathcal{B}(\Gamma)$ as follows. 

\begin{theorem}[{\cite[Theorem 4.1]{athanasiadis1998free-ejoc}}, {\cite[Corollary 7.4]{bailey1997tilings}}]\label{Athanasiadis-Bailey free}
The following are equivalent. 
\begin{enumerate}[(1)]
\item $\mathbf{c}\mathcal{A}(\Gamma)$ is (inductively) free. 
\item $\mathcal{B}(\Gamma)$ is (inductively) free. 
\item $\Gamma$ does not have any of two digraphs in Figure \ref{fig:Athanasiadis-Bailey obstruction to freeness} as an induced subgraph. 
\end{enumerate}
\end{theorem}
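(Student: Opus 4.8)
The plan is to prove the equivalences $(1)\Leftrightarrow(3)$ and $(2)\Leftrightarrow(3)$ separately --- as in the two source papers --- so that $(1)\Leftrightarrow(2)$ follows by transitivity, and to arrange the argument so that both the ``free'' and the ``inductively free'' readings fall out at once. Two preliminary reductions help. First, by definition the freeness of an affine arrangement is that of its cone, so $(1)$ is literally the (inductive) freeness of $\mathcal{A}(\Gamma)$; and relabelling the vertices of $\Gamma$ is realized by a linear automorphism of the ambient space permuting the hyperplanes of both $\mathcal{A}(\Gamma)$ and $\mathcal{B}(\Gamma)$, so I may assume $\Gamma$ has been topologically sorted with all arcs increasing. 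Second --- and this is the combinatorial pivot --- \emph{induced sub-digraphs show up as localizations}: if $\Gamma[S]\cong D$, then the localization of $\mathcal{B}(\Gamma)$ at the flat $\bigcap_{k\notin S}\{x_{k}=0\}$ is $\mathcal{B}(\Gamma[S])=\mathcal{B}(D)$, and the localization of $\mathbf{c}\mathcal{A}(\Gamma)$ at the flat $\{z=0\}\cap\bigcap_{i<j,\ i,j\in S}\{x_{i}-x_{j}=0\}$ is $\mathbf{c}\mathcal{A}(\Gamma[S])=\mathbf{c}\mathcal{A}(D)$. Both identifications are just a matter of inspecting which defining equations become identities on the flat.

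For $\neg(3)\Rightarrow\neg(1)$ and $\neg(3)\Rightarrow\neg(2)$: if $\Gamma$ contains one of the two digraphs $D$ of Figure \ref{fig:Athanasiadis-Bailey obstruction to freeness} as an induced subgraph, then, since a localization of a free arrangement is free (Orlik--Terao) and an inductively free arrangement is in particular free, it suffices to check that neither $\mathbf{c}\mathcal{A}(D)$ nor $\mathcal{B}(D)$ is free for the two $D$ in the figure. These are small explicit arrangements, and the cleanest non-freeness certificate is Terao's factorization theorem --- compute the characteristic polynomial (by deletion--restriction, or by the finite-field method) and observe that it does not split into linear factors over $\mathbb{Z}$ --- or, alternatively, a direct degree count in the module of logarithmic derivations. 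Incidentally this already exhibits $\mathbf{c}\mathcal{A}(D)$ and $\mathcal{B}(D)$ failing to be free \emph{together}, which is the germ of the coincidence in the theorem.

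The substance is $(3)\Rightarrow(1)$ and $(3)\Rightarrow(2)$. I would first isolate a structural lemma: a digraph avoiding both forbidden induced subgraphs admits a ``perfect-elimination-like'' vertex order $v_{1},\dots,v_{\ell}$ in which, for each $k$, the in- and out-neighbourhoods of $v_{k}$ inside $\{v_{1},\dots,v_{k-1}\}$ are chains (nested under inclusion), and --- crucially --- this property is inherited by the digraph operations that occur below (deletion of a vertex, and identification of two vertices / contraction of an arc). With the order fixed, induct on $\ell$, in parallel for the two arrangements: build $\mathbf{c}\mathcal{A}(\Gamma)$ from $\mathbf{c}\mathcal{A}(\Gamma-v_{\ell})$ (free by induction) by adjoining the hyperplanes through $x_{v_{\ell}}$ --- either one at a time via Terao's Addition--Deletion Theorem, or all at once via Abe's Multiple Addition Theorem --- and likewise for $\mathcal{B}(\Gamma)$. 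At each step the relevant restriction is again a digraph arrangement of the same type, still inside the class: restricting $\mathbf{c}\mathcal{A}(\Gamma)$ to $\{z=0\}$ returns the braid arrangement $\Cox(\ell)$ (inductively free, being supersolvable), restricting to $\{x_{i}-x_{j}=0\}$ identifies the two vertices, and analogously for $\mathcal{B}(\Gamma)$ with $\{x_{i}=0\}$ and $\{x_{i}-qx_{j}=0\}$; the structural lemma then guarantees it is free with exactly the exponents the addition theorem demands. Since the base cases ($\Cox(\ell)$, the empty arrangement) are inductively free and every step is an addition step, the output is inductive freeness, hence freeness.

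The main obstacle is this third paragraph, and it splits into two coupled issues: (a) finding a structural description of forbidden-subgraph-free digraphs strong enough to drive the induction yet closed under the vertex-identification/contraction moves produced by restriction --- recall that restriction does not preserve freeness in general, so freeness of the restrictions is genuinely something to prove; and (b) carrying the exponent multiset accurately through the entire chain of additions, so that the hypotheses of the Addition--Deletion (or Multiple Addition) Theorem hold at every stage. The necessity direction, by contrast, is essentially immediate once the localization identification is recorded. Finally, I would note that on this route the coincidence $(1)\Leftrightarrow(2)$ is no accident: both arrangements are governed by the same digraph $\Gamma$ via the same elimination order --- which is precisely the combinatorial mechanism the rest of the paper abstracts and then generalizes to gain graphs.
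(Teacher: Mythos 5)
Two framing remarks before the substance. First, the paper does not prove Theorem \ref{Athanasiadis-Bailey free}: it is quoted from Athanasiadis and Bailey, and the authors explicitly note that Theorems \ref{main theorem IF} and \ref{main theorem DF} do \emph{not} recover it, precisely because free need not imply divisionally free. So your attempt is to be measured against the cited originals and against the paper's own analogue for signed graphs (Theorem \ref{main theorem signed}, Section \ref{sec:signed}), whose proof has exactly the shape you describe. Second, your architecture --- prove $(1)\Leftrightarrow(3)$ and $(2)\Leftrightarrow(3)$ separately; get necessity of $(3)$ from ``induced subdigraphs are localizations'' plus Proposition \ref{free localization} and Theorem \ref{factorization theorem}; get sufficiency from an elimination order and Addition--Deletion --- is the right one. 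The necessity half is essentially complete: for the directed path one computes $\chi(\mathcal{A}(D),t)=t(t^{2}-5t+7)$ and for the two disjoint arcs $\chi(\mathcal{A}(D),t)=t(t-3)(t^{2}-5t+7)$, and the irreducible quadratic factor rules out freeness of both $\mathbf{c}\mathcal{A}(D)$ and $\mathcal{B}(D)$ (whose characteristic polynomial differs only by the shift $t\mapsto t+1$, Lemma \ref{lem:equalbetweencharpoly}). One indexing slip: the flat realizing $\mathcal{B}(\Gamma[S])$ as a localization is $\bigcap_{k\in S}\{x_{k}=0\}$, not $\bigcap_{k\notin S}\{x_{k}=0\}$; as written, your flat picks out the induced subgraph on the complement of $S$.

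The genuine gap is the direction $(3)\Rightarrow(1),(2)$, which carries all the content of the theorem and which your third paragraph only outlines. The ``structural lemma'' is neither stated precisely nor proved, and it is not a formality: you must (a) translate the two forbidden induced subdigraphs into an explicit elimination structure and prove the equivalence; (b) check that this structure is preserved by the vertex identifications that restriction produces --- note, for example, that restricting $\mathbf{c}\mathcal{A}(\Gamma)$ to $\{x_{i}-x_{j}=0\}$ when $(i,k)$ and $(k,j)$ are both arcs creates the pair $x_{i}-x_{k}=z$ and $x_{k}-x_{i}=z$, so the restrictions leave the class of Athanasiadis digraph arrangements and land in the gain-graph class (with gains in $\{-1,0,1\}$), and one has to know freeness there too; and (c) verify at every single addition step that the exponents of the restriction sit inside those of the deletion, which is exactly the bookkeeping that the original proofs spend their effort on. You correctly identify these as ``the main obstacle,'' but identifying an obstacle is not the same as removing it. As it stands, the proposal is a correct and well-chosen proof of $\neg(3)\Rightarrow\neg(1)\wedge\neg(2)$ together with a credible program --- matching the one actually carried out in the cited sources --- for the converse, rather than a proof of the full equivalence.
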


\begin{theorem}[{\cite[Theorem 4.2]{athanasiadis1998free-ejoc}}, {\cite[Theorem 7.8]{bailey1997tilings}}]\label{Athanasiadis-Bailey supersolvable}
The following are equivalent. 
\begin{enumerate}[(1)]
\item $\mathbf{c}\mathcal{A}(\Gamma)$ is supersolvable. 
\item $\mathcal{B}(\Gamma)$ is supersolvable. 
\item All the edges of $\Gamma$ have the same terminal vertex or have the same initial vertex. 
\end{enumerate}
\end{theorem}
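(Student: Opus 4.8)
\emph{Overall strategy.} I would split the cycle of equivalences into the ``constructive'' implications $(3)\Rightarrow(1)$ and $(3)\Rightarrow(2)$, proved by writing down explicit modular structures, and the two contrapositives $\neg(3)\Rightarrow\neg(1)$ and $\neg(3)\Rightarrow\neg(2)$, reduced to a finite case check by localization; the remaining equivalence $(1)\Leftrightarrow(2)$ then drops out because every implication factors through the purely combinatorial condition $(3)$. At the outset I would record two edge‑reversing symmetries. The linear involution $x_i\mapsto -x_i$ fixes $\Cox(\ell)$ and $\{z=0\}$ and carries $\{x_i-x_j=z\}$ to $\{x_j-x_i=z\}$, so $\mathbf{c}\mathcal{A}(\Gamma)$ and $\mathbf{c}\mathcal{A}(\Gamma^{\mathrm{op}})$ are linearly isomorphic; and $\{x_i=qx_j\}=\{x_j=q^{-1}x_i\}$, together with the fact that supersolvability is combinatorial (hence insensitive to the choice of non‑root‑of‑unity $q$), gives the analogous statement for $\mathcal{B}$. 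These let me assume, while proving $(3)\Rightarrow(1)$ or $(3)\Rightarrow(2)$, that \emph{all edges of $\Gamma$ have a common terminal vertex}, say $\ell$; write $S\subseteq[\ell-1]$ for the set of initial vertices.

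\emph{The implications $(3)\Rightarrow(1),(2)$.} For $\mathbf{c}\mathcal{A}(\Gamma)$ I would extend the standard modular chain of the braid arrangement: put $\mathcal{A}_k=\{x_i=x_j\mid 1\le i<j\le k+1\}$ for $1\le k\le\ell-1$, so $\mathcal{A}_{\ell-1}=\Cox(\ell)$ has rank $\ell-1$, and set $\mathcal{A}_\ell=\mathbf{c}\mathcal{A}(\Gamma)$ (rank $\ell$, the jump caused by $\{z=0\}$). Every block but the last is a block of the braid chain, and in the last block $\{z=0\}\cup\{x_i-x_\ell=z\mid i\in S\}$ the defining condition is checked directly, since $\{z=0\}\cap\{x_i-x_\ell=z\}\subseteq\{x_i=x_\ell\}$ and $\{x_i-x_\ell=z\}\cap\{x_j-x_\ell=z\}\subseteq\{x_i=x_j\}$, both members of $\Cox(\ell)$. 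For $\mathcal{B}(\Gamma)$ I would instead induct on $\ell$, taking the coordinate line $\mathbb{C}e_i$ as a modular coatom: its localization in $\mathcal{B}(\Gamma)$ is the set of hyperplanes not involving $x_i$, which is $\mathcal{B}(\Gamma-i)$, again of ``common terminal'' type and hence supersolvable by induction; the coatom condition concerns only pairs among the hyperplanes that \emph{do} involve $x_i$ and follows from $\{x_i=x_j\}\cap\{x_i=x_k\}\subseteq\{x_j=x_k\}$, $\{x_i=x_j\}\cap\{x_i=0\}\subseteq\{x_j=0\}$, and $\{x_i=x_\ell\}\cap\{x_i=qx_\ell\}\subseteq\{x_\ell=0\}$ (the last using $q\ne1$). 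Condition $(3)$ enters only in the choice of $i$: pick a vertex incident to no edge if one exists, and otherwise $S=[\ell]\setminus\{\ell\}$ and any $i\ne\ell$ works because then also $\{x_i=x_j\}\cap\{x_i=qx_\ell\}\subseteq\{x_j=qx_\ell\}$ is a hyperplane of the arrangement.

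\emph{The converses.} I would use two standard facts: a localization of a supersolvable arrangement is supersolvable (lower intervals of supersolvable geometric lattices are supersolvable), and $\mathcal{C}\times\mathcal{C}'$ is supersolvable iff both factors are. For an induced subdigraph $\Gamma'\subseteq\Gamma$, the subarrangement of $\mathbf{c}\mathcal{A}(\Gamma)$ (resp.\ $\mathcal{B}(\Gamma)$) consisting of $\{z=0\}$ and all hyperplanes supported on the vertices of $\Gamma'$ is intersection‑closed, hence a localization, and equals $\mathbf{c}\mathcal{A}(\Gamma')$ (resp.\ $\mathcal{B}(\Gamma')$) times an empty arrangement on the remaining coordinates; thus supersolvability of either arrangement is inherited by induced subdigraphs, and it suffices to refute it for the digraphs $\Gamma_0$ that are minimal (under induced subdigraph) among those violating $(3)$. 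Splitting on whether $\Gamma_0$ contains a directed path of length two, one sees these are exactly: the directed $2$‑path $1\to2\to3$; the transitively oriented triangle; two disjoint edges; the source‑to‑sink orientation of the path on four vertices; and the source‑to‑sink orientation of the $4$‑cycle. The first and third are not even free (by Theorem \ref{Athanasiadis-Bailey free}, or directly: a finite‑field point count makes their characteristic polynomial divisible by $q^2-5q+7$, which has no real root), hence not supersolvable. The transitively oriented triangle gives the cone of the rank‑$3$ Shi arrangement, whose non‑supersolvability — and that of the corresponding $\mathcal{B}$ — is the classical prototype of a free non‑supersolvable arrangement. For the last two digraphs one checks by hand that no coatom of the intersection lattice is modular: in each case there is a codimension‑two flat lying on only two hyperplanes of the arrangement, and the constraints ``$X$ must lie on one of these two hyperplanes'' coming from several such flats are jointly unsatisfiable by any coatom $X$.

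\emph{Main obstacle.} The genuine difficulty is the converse, and within it the two four‑vertex digraphs: their (essentialized) characteristic polynomials $(t-1)(t-3)^3$ and $(t-1)(t-3)^2(t-4)$ factor with positive integer roots, so Terao's factorization yields no contradiction and one must actually exhaust the candidate modular coatoms and extract an incompatibility from the ``two‑hyperplane flats''. A secondary, bookkeeping‑type obstacle is making precise the dictionary ``induced subdigraph $\leftrightarrow$ intersection‑closed subarrangement $\leftrightarrow$ localization'' and carrying along the harmless empty Cartesian factor. The forward implications, by contrast, are just verification of the modular‑chain and modular‑coatom axioms once the right chain — respectively, the right sequence of coordinate coatoms — is written down.
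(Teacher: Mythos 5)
Your proposal is a genuinely different route from the paper's. The paper does not prove Theorem \ref{Athanasiadis-Bailey supersolvable} at all: it is quoted from Athanasiadis and Bailey, and the text observes that it also follows from Zaslavsky's vertex-ordering characterization of supersolvability for gain-graphic arrangements (Theorem \ref{Zaslavsky supersolvability} together with Remark \ref{rem: digraph}). You instead give a direct argument, and the skeleton is sound: the M-chain you write down for $\mathbf{c}\mathcal{A}(\Gamma)$ and the modular-coatom induction for $\mathcal{B}(\Gamma)$ both verify correctly (I checked the pair conditions, including the one place where $q\neq 1$ is used), the reduction of the converse to minimal forbidden induced subdigraphs via localization is legitimate, and your list of five minimal violators of condition (3) is in fact complete --- the key observation being that a digraph violating (3) always contains two edges with \emph{both} distinct heads and distinct tails, after which the 3- and 4-vertex case analysis closes up exactly as you describe. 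Your finite-field computations are also right: both the directed $2$-path and the two disjoint arcs have characteristic polynomial divisible by $t^{2}-5t+7$ (the latter is $t(t-3)(t^{2}-5t+7)$), so Terao's factorization kills freeness there without appealing to Theorem \ref{Athanasiadis-Bailey free}. What your approach buys is a self-contained, elementary proof; what the paper's route buys is uniformity, since Zaslavsky's criterion covers arbitrary gain groups, not just the $\{0,1\}$-gain digraph case.

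The one place you have not actually closed the argument is the non-supersolvability of the three \emph{free} minimal obstructions (transitive triangle, source-to-sink $P_{4}$, source-to-sink $C_{4}$) for both families of arrangements --- six small verifications that you describe but do not execute, and which you correctly identify as the real content of the converse. The method you propose does work: for instance, for the transitive triangle, $\mathbf{c}\mathcal{A}(\Sigma)$ has $7$ hyperplanes and exponents $(1,3,3)$, so a modular coatom would be a rank-$2$ localization of size $4$ or $6$, while every rank-$2$ localization of that arrangement has size at most $3$; the analogous count for $\mathcal{B}(\Sigma)$ (the Edelman--Reiner arrangement, $9$ hyperplanes, exponents $(1,4,4)$, maximal rank-$2$ localization of size $4<5$) is equally quick. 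For the two four-vertex digraphs the ``two-hyperplane flat'' constraints do become jointly unsatisfiable as you predict, but since this is precisely where Terao's factorization gives no help, these checks should be written out rather than asserted for the proof to be complete.
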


\begin{figure}[t]
\centering
\begin{tikzpicture}
\draw (0,0) node[v](1){};
\draw (.5,1) node[v](2){};
\draw (1,0) node[v](3){};
\draw[-stealth] (1)--(2);
\draw[-stealth] (2)--(3);
\end{tikzpicture}
\hspace{30mm}
\begin{tikzpicture}
\draw (0,1) node[v](1){};
\draw (0,0) node[v](2){};
\draw (1,1) node[v](3){};
\draw (1,0) node[v](4){};
\draw[-stealth] (1)--(2);
\draw[-stealth] (3)--(4);
\end{tikzpicture}
\caption{Obstructions to freeness for $\mathbf{c}\mathcal{A}(\Gamma)$ and $\mathcal{B}(\Gamma)$ in Theorem \ref{Athanasiadis-Bailey free}.}
\label{fig:Athanasiadis-Bailey obstruction to freeness}
\end{figure}
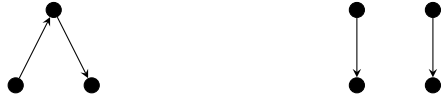

As Athanasiadis \cite[Remark in p.15]{athanasiadis1998free-ejoc} and Bailey \cite[p.105]{bailey1997tilings} pointed out, although the arrangements $\mathbf{c}\mathcal{A}(\Gamma)$ and $\mathcal{B}(\Gamma)$ are different, the coincidences described in Theorem \ref{Athanasiadis-Bailey free} and Theorem \ref{Athanasiadis-Bailey supersolvable} occur. 
In this article, we will discuss these coincidence in more general setting. 

\begin{definition}
A \textbf{simple gain graph} $\Gamma = (V_{\Gamma}, E_{\Gamma}, G_{\Gamma})$ consists of the following data
\begin{itemize}
\item $V_{\Gamma}$ is a finite set. 
\item $G_{\Gamma}$ is a group. 
\item $E_{\Gamma}$ is a finite subset of
\begin{align*}
\Set{(i,j,g) \in V_{\Gamma} \times V_{\Gamma} \times G | i \neq j}/{\sim}, 
\end{align*}
where we use the plus sign $+$ for the operation of $G_{\Gamma}$ and $\sim$ denotes the equivalence relation generated by $(i,j,g) \sim (j,i,-g)$.
We call elements in $V_{\Gamma}, E_{\Gamma}, G_{\Gamma}$, \textbf{vertices}, \textbf{edges} and \textbf{gains}, respectively. 
Let $[i,j,g]$ denote the equivalence class of $(i,j,g)$. 
\end{itemize}
\end{definition}

We simplify the notion of gain graphs for our purpose. 
See \cite{zaslavsky1989biased, zaslavsky1991biased, zaslavsky1995biased, zaslavsky2003biased} for general theory of gain graphs. 

\begin{definition}
Let $\Gamma$ be a simple gain graph on $V_{\Gamma} = [\ell]$ and suppose that the gain group $G_{\Gamma}$ is the additive group of $\mathbb{Z}$ or $\mathbb{F}_{p}$, the finite field of $p$ elements, where $p$ is a prime. 
We define the \textbf{affinographic arrangement} $\mathcal{A}(\Gamma)$ in $\mathbb{Q}^{\ell}$ if $G_{\Gamma} = \mathbb{Z}$ or in $\mathbb{F}_{p}^{\ell}$ if $G_{\Gamma} = \mathbb{F}_{p}$ as follows. 
\begin{align*}
\mathcal{A}(\Gamma) \coloneqq \Set{\{x_{i}-x_{j} = g\} | [i,j,g] \in E_{\Gamma}}. 
\end{align*}
Let $q \in \mathbb{C}^{\times}$ an element which is not a root of unity if $G_{\Gamma} = \mathbb{Z}$ or the primitive $p$-th root of unity if $G_{\Gamma} = \mathbb{F}_{p}$. 
Define the \textbf{bias arrangement} $\mathcal{B}(\Gamma)$ in $\mathbb{C}^{\ell}$ by
\begin{align*}
\mathcal{B}(\Gamma) &\coloneqq \Set{\{x_{i} = 0\} | 1 \leq i \leq \ell} \cup \Set{\{x_{i}-q^{g}x_{j} = 0\} | [i,j,g] \in E_{\Gamma} }. 
\end{align*}
\end{definition}

Note that the definitions of $\mathcal{A}(\Gamma)$ and $\mathcal{B}(\Gamma)$ are well-defined. 
Namely, the hyperplanes $\{x_{i}-x_{j} = g\}$ and $\{x_{i}-q^{g}x_{j} = 0\}$ are independent of the choice of a representative of an edge of $\Gamma$ since 
\begin{align*}
\{x_{i}-x_{j} = g\} = \{x_{j}-x_{i}=-g\} \quad \text{ and } \quad
\{x_{i}-q^{g}x_{j} = 0\} = \{x_{j}-q^{-g}x_{i}=0\}. 
\end{align*}

\begin{example} 
Let $\Gamma$ be the simple gain graph of Figure \ref{Fig:exgaingraph} with $G_{\Gamma} = \mathbb{Z}$ on $3$ vertices consisting of the edges
\begin{align*}
[1,2,0],\ [1,3,0],\ [1,2,1],\ [2,3,1],\ [2,3,-1],\ [1,3,2]. 
\end{align*}
\begin{figure}[t]
\centering
\begin{tikzpicture}
\draw (0,1.73) node[circle, draw, inner sep=1pt](1){$1$};
\draw (-1,0) node[circle, draw, inner sep=1pt](2){$2$};
\draw ( 1,0) node[circle, draw, inner sep=1pt](3){$3$};
\draw[] (1)-- node[xshift=6, yshift=-3] {$0$} (2);
\draw[] (1)-- node[xshift=-6, yshift=-3] {$0$} (3);
\draw[bend right,-stealth] (1) to node[left]{$1$} (2);
\draw[bend left,-stealth] (1) to node[right]{$2$}(3);
\draw[bend left,-stealth] (2) to node[below]{$1$}(3);
\draw[bend right,-stealth] (2) to node[below]{$-1$}(3);
\end{tikzpicture}
\caption{A gain graph with integral gains.}\label{Fig:exgaingraph}
\end{figure}
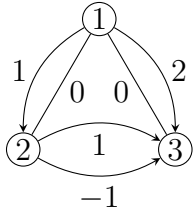
Then the defining polynomial of $\mathcal{A}(\Gamma)$ is 
\begin{align*}
Q_{\mathcal{A}(\Gamma)}=(x_1-x_2)(x_1-x_3)(x_1-x_2-1)(x_2-x_3-1)(x_2-x_3+1)(x_1-x_3-2)
\end{align*}
and the defining polynomial of $\mathcal{B}(\Gamma)$ is 
\begin{align*}
Q_{\mathcal{B}(\Gamma)} = x_{1}x_{2}x_{3}(x_{1}-x_{2})(x_{1}-x_{3})(x_{1}-q x_{2})(x_{2}-q x_{3})(x_{2}-q^{-1} x_{3})(x_{1}-q^{2}x_{3}), 
\end{align*}
where $q \in \mathbb{C}^{\times}$ is not a root of unity. 
\end{example}

\begin{example}
If the gains of all edges in $\Gamma$ are $0$, then $\Gamma$ can be regarded as a simple graph.
The arrangement $\mathcal{A}(\Gamma)$ is known to be a \textbf{graphic arrangement}. 
Moreover, if the edges form a complete graph and $G_{\Gamma} = \mathbb{Z}$, then $\mathcal{A}(\Gamma)$ is the \textbf{Coxeter arrangement of type $\mathrm{A}_{\ell-1}$}. 
Stanley proved that freeness and supersolvability are equivalent in the class of graphic arrangements and they are also equivalent to chordality of $\Gamma$ in any characteristic (see \cite[Corollary 4.10]{stanley2007introduction-gc} and \cite[Theorem 3.3]{edelman1994free-mz}). 
\end{example}

\begin{remark}\label{rem: digraph}
Suppose that $G_{\Gamma} = \mathbb{Z}$ and $E_{\Gamma}$ consists of all edges in $\Set{[i,j,0] | 1 \leq i < j \leq \ell}$ and some edges in $\Set{[i,j,1] | 1 \leq i < j \leq \ell}$. 
Then the edges with gain $1$ forms an acyclic digraph and the arrangements $\mathcal{A}(\Gamma)$ and $\mathcal{B}(\Gamma)$ are the same with the arrangements studied by Athanasiadis and Bailey. 
\end{remark}

\begin{example}\label{ex:Catalan-Shi}
Let $m$ be a positive integer. 
Define the \textbf{extended Catalan arrangements} and the \textbf{extended Shi arrangements} in $\mathbb{Q}^{\ell}$ as follows. 
\begin{align*}
\Cat(\ell, m) &\coloneqq \Set{\{x_{i}-x_{j} = g\} | 1 \leq i < j \leq \ell, -m \leq g \leq m}, \\
\Shi(\ell, m) &\coloneqq \Cat(\ell, m-1) \cup \Set{\{x_{i}-x_{j} = m\} | 1 \leq i < j \leq \ell}, 
\end{align*}
where $\Cat(\ell, 0) \coloneqq \Cox(\ell)$. 
Both these arrangements are affinographic arrangements of gain graphs with gain groups $\mathbb{Z}$ and the cones over them are shown to be inductively free by Edelman and Reiner \cite[Proof of Theorem 3.2]{edelman1996free-dcg} and Athanasiadis \cite[Corollary 3.4]{athanasiadis1998free-ejoc}. 

Arrangements between $\Cat(\ell,m)$ and $\Cat(\ell,m-1)$ are characterized by the edges with gain $m$, that is, the digraph on $[\ell]$ whose arc $(i,j)$ corresponds with the hyperplane $\{x_{i}-x_{j} = m\}$. 
Athanasiadis \cite{athanasiadis2000deformations-asipm} conjectured that these arrangements are free if and only if the corresponding graphs satisfy certain conditions. 
Abe, Nuida, and Numata \cite{abe2009signed-eliminable-jotlms} and Abe \cite{abe2012conjecture-mrl} resolved this conjecture. 

Wang and Jiang \cite{wang2022free-gac} characterized freeness of subarrangements of $\Shi(\ell,1)$ which may not contain all hyperplanes in $\Cox(\ell)$. 

Nakashima and the third author \cite{nakashima2023freeness-dm} extended the class consisting of arrangements between $\Cat(\ell,m)$ and $\Cat(\ell,m-1)$ and characterized freeness in terms of graphs.
As corollaries, it is shown that $\mathrm{c}\Cat(\ell, m)$ is hereditarily free, that is, every restriction of $\mathrm{c}\Cat(\ell, m)$ is free and if $\ell \geq 6$, then $\mathrm{c}\Shi(\ell,m)$ is not hereditarily free. 

In general, it is not easy to determine a concrete basis for the module of logarithmic derivations of free arrangements. 
Yoshinaga and the first author \cite{suyama2021primitive-siagmaa} gave explicit formulas for bases for the extended Catalan and Shi arrangements using with discrete integrals. 
\end{example}

\begin{example}
When the edge set of $\Gamma$ is the empty set, the arrangement $\mathcal{B}(\Gamma)$ is known to be the \textbf{Boolean arrangement}, which can be proven to be supersolvable and hence free easily. 
\end{example}

\begin{example}
Suppose that $G_{\Gamma} = \mathbb{F}_{2}$ and $q = -1$. 
Then the bias arrangement $\mathcal{B}(\Gamma)$ is known as a \textbf{signed graphic arrangement}.  
If $\Gamma$ is complete, that is, $\Gamma$ has all possible edges, then $\mathcal{B}(\Gamma)$ is known as the \textbf{Coxeter arrangement of type $\mathrm{B}_{\ell}$}. 
Freeness and supersolvability for these arrangements are studied in \cite{edelman1994free-mz,zaslavsky2001supersolvable-ejoc,suyama2019signed, torielli2020freeness-tejoc}
\end{example}

\begin{example}
If $G_{\Gamma} = \mathbb{F}_{p}$ and $\Gamma$ is complete, then $\mathcal{B}(\Gamma)$ is the arrangements consisting of reflecting hyperplanes for the complex reflection group $G(p, 1, \ell)$ (the \textbf{full monomial group}) and hence supersolvable. 
Note that every reflection arrangement associated with $G(r,1,\ell)$, where $r$ is a positive integer is supersolvable. 
\end{example}

\begin{example}
Let $\ell=3$, $G_{\Gamma} = \mathbb{Z}$, and $E_{\Gamma} = \Set{[i,j,g] | 1 \leq i < j \leq 3, g \in \{0,1\}}$. 
Then the bias arrangement $\mathcal{B}(\Gamma)$ is given by the following defining polynomial
\begin{align*}
x_{1}x_{2}x_{3}(x_{1}-x_{2})(x_{1}-x_{3})(x_{2}-x_{3})(x_{1}-qx_{2})(x_{1}-qx_{3})(x_{2}-qx_{3}). 
\end{align*}
This is the first example of arrangements which is free and not $K(\pi, 1)$ obtained by Edelman and Reiner \cite[Theorem 2,1]{edelman1995not-botams}. 
They also constructed a family of free bias arrangements including this arrangement \cite[Theorem 3.4]{edelman1996free-dcg}. 
\end{example}

\begin{example}\label{ex:DMS}
Let $G_{\Gamma} = \mathbb{Z}$ and suppose that $E_{\Gamma}$ consists of all edges with gain in $\Set{g \in \mathbb{Z} | -m \leq g \leq m}$. 
Then the corresponding affinographic arrangement $\mathcal{A}(\Gamma)$ is the extended Catalan arrangement as mentioned in Example \ref{ex:Catalan-Shi}. 
Define the \textbf{two-parameter Fuss-Catalan number} or the \textbf{Raney number} $A_{\ell}(s,r)$ by 
\begin{align*}
A_{\ell}(s,r) \coloneqq \dfrac{r}{\ell s+r}\binom{\ell s+r}{\ell}. 
\end{align*}
Note that $A_{\ell}(2,1) = \frac{1}{2\ell+1}\binom{2\ell+1}{\ell} = \frac{1}{\ell+1}\binom{2\ell}{\ell}$ it is the $\ell$-th Catalan number and it is well known that the number of the chambers of the extended Catalan arrangement $\Cat(\ell, m)$ is equal to $\ell ! \cdot A_{\ell}(m+1,1)$. 

Recently, Deshpande, Menon, and Sarkar \cite{deshpande2023refinements-joac} introduced an arrangement whose number of chambers is $\ell! \cdot A_{\ell}(m+1,2)$. 
This arrangement coincides with the bias arrangement $\mathcal{B}(\Gamma)$ and it is determined by the following defining polynomial 
\begin{align*}
x_{1} \cdots x_{\ell} \prod_{\substack{1 \leq i < j \leq \ell \\ -m \leq g \leq m}} (x_{i}-q^{g}x_{j}). 
\end{align*}
\end{example}

From the discussion above, it is clear that both of $\mathcal{A}(\Gamma)$ and $\mathcal{B}(\Gamma)$ form classes including important well-studied arrangements. 
Characterizing freeness of $\mathcal{A}(\Gamma)$ and $\mathcal{B}(\Gamma)$ in terms of a gain graph $\Gamma$ might be an interesting problem. 
The third author \cite{tsujie2020modular-siagmaa} provided a sufficient condition for freeness of these arrangements. 

Zaslavsky characterized supersolvability of $\mathbf{c}\mathcal{A}(\Gamma)$ and $\mathcal{B}(\Gamma)$ in terms of a vertex ordering and, as a result, we obtain the following theorem. 

\begin{theorem}[A consequence of {\cite[Theorems 2.2 and 3.2]{zaslavsky2001supersolvable-ejoc}}]\label{Zaslavsky supersolvability}
If $\B(\Gamma)$ is supersolvable, then $\mathbf{c}\A(\Gamma)$ is supersolvable. If $\Gamma$ is biconnected, then also the converse statement holds true.
\end{theorem}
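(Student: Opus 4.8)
\section*{Proof proposal}

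The plan is to obtain both implications from Zaslavsky's characterizations of supersolvability, which translate each side of the statement into a condition on orderings of the vertex set $[\ell]$. As far as the intersection lattice is concerned, $\B(\Gamma)$ represents the frame matroid of $\Gamma$ with a half-edge adjoined at every vertex, the hyperplane $\{x_i=0\}$ playing the role of that half-edge; and $\mathbf{c}\A(\Gamma) = \{z=0\}\cup\Set{\{x_i-x_j=gz\} | [i,j,g]\in E_\Gamma}$ represents the extended lift matroid $L_0(\Gamma)$, the hyperplane at infinity $\{z=0\}$ being the extra element $e_0$. So the first step is to restate \cite[Theorems 2.2 and 3.2]{zaslavsky2001supersolvable-ejoc} in this dictionary: $\B(\Gamma)$ is supersolvable precisely when $\Gamma$ admits an ordering $v_1,\dots,v_\ell$ of $[\ell]$ in which each $v_k$ is \emph{frame-simplicial} over $\{v_1,\dots,v_{k-1}\}$ (it attaches along a \emph{balanced} complete induced subgraph, subject to the relevant parallel-edge restriction), while $\mathbf{c}\A(\Gamma)$ is supersolvable precisely when $\Gamma$ admits an ordering in which each $v_k$ is \emph{lift-simplicial} over its predecessors --- the weaker condition which permits, in addition, attachments whose newly created circles are unbalanced, their single unit of imbalance being absorbed by $e_0$.

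For the implication ``$\B(\Gamma)$ supersolvable $\Rightarrow$ $\mathbf{c}\A(\Gamma)$ supersolvable'' I would just verify that a frame-simplicial ordering is lift-simplicial. At the step adding $v_k$, a balanced complete attachment creates only balanced circles, and a balanced circle imposes the same relation in $L_0(\Gamma)$ as in the frame matroid while the half-edges impose nothing on $e_0$; hence the weaker lift requirement holds. Thus a supersolvable ordering for $\B(\Gamma)$ is a supersolvable ordering for $\mathbf{c}\A(\Gamma)$, and no hypothesis on $\Gamma$ is used.

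For the converse, assume $\Gamma$ biconnected and fix a lift-simplicial ordering $v_1,\dots,v_\ell$; I would show by induction along the ordering that each step is frame-simplicial, which yields supersolvability of $\B(\Gamma)$. The only way a lift-simplicial step can fail to be frame-simplicial is that $v_k$ attaches along a configuration containing an \emph{unbalanced} circle $C$ whose imbalance is carried by $e_0$. This is where biconnectedness is used: having no cut vertex, $\Gamma$ provides a further path inside $\{v_1,\dots,v_k\}$, internally disjoint from $C$, that closes $C$ into a second circle $C'$; a gain computation around $C$ and $C'$ then shows that one imbalance value on $e_0$ cannot serve both $C$ and $C'$ unless $C$ was balanced after all, a contradiction. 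Hence every step is frame-simplicial. The main obstacle is this last argument: one must identify exactly which attachments are lift-simplicial but not frame-simplicial, check that the initial segments of the ordering retain enough connectivity for the closing path to exist, and carry out the bookkeeping of gains around the two circles; it is probably cleanest to reduce first to the case in which $\Gamma$ has no balanced circle, where the frame/lift dichotomy is sharpest, and to treat the interaction with the hyperplanes $\{x_i=0\}$ --- where the parallel-edge subtleties reside --- as a separate, easier case.
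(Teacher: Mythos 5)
Your overall strategy is the one the paper itself intends: the theorem is stated there without proof, as a direct consequence of Zaslavsky's Theorems 2.2 and 3.2, and your dictionary (the hyperplanes $\{x_i=0\}$ as half-edges making $\B(\Gamma)$ the full frame matroid, and $\{z=0\}$ as the extra element $e_0$ making $\mathbf{c}\A(\Gamma)$ the extended lift matroid $L_0(\Gamma)$) is exactly the translation needed to invoke those two vertex-ordering characterizations. The forward implication, frame-simplicial $\Rightarrow$ lift-simplicial, is essentially right as you argue it.

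The converse, however, is where all the content lies, and there your proposal has a genuine gap that you yourself flag but do not close, and moreover the dichotomy you propose to exploit looks misaimed. You claim that the only way a lift-simplicial step can fail to be frame-simplicial is that $v_k$ attaches along an unbalanced circle ``whose imbalance is absorbed by $e_0$.'' But consider two edges $e=v_ku$ and $f=v_kw$ with $u\neq w$ among the earlier vertices: in $L_0(\Gamma)$ an unbalanced triangle together with $e_0$ is a four-element circuit, so $e_0\notin\operatorname{cl}\{e,f\}$, and the modularity requirement on this pair is that the \emph{balanced} completion edge $uw$ be present --- exactly the same requirement as in the frame matroid (where parallel pairs and half-edges are likewise handled identically on both sides, by the half-edge at $u$ resp.\ by $e_0$). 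So at the level of pairs of atoms at the new vertex the two conditions coincide, and your proposed gain computation around two circles $C$, $C'$ has nothing to bite on. The actual divergence between Zaslavsky's two characterizations --- and the reason biconnectedness appears as a hypothesis --- lives elsewhere: his modular chains are not forced to run through full vertex-induced flats (balanced coatoms are allowed), and the two constructions behave differently across cut vertices (for disconnected $\Gamma$, $\B(\Gamma)$ is a product of the arrangements of the components while $\mathbf{c}\A(\Gamma)$ shares the single hyperplane $z=0$ among them). Until you restate Theorems 2.2 and 3.2 with their precise definitions of simplicial vertex for each matroid and locate the difference there, the converse direction is not proved.
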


Note that Theorem \ref{Zaslavsky supersolvability} and Remark \ref{rem: digraph} implies Theorem \ref{Athanasiadis-Bailey supersolvable}. 
To state results concerning freeness we will introduce the notions of inductive and divisional freeness \emph{along edges} for our arrangements $\mathcal{A}(\Gamma)$ and $\mathcal{B}(\Gamma)$ (See Definition \ref{definition IF along edges} and \ref{definition DF along edges}). 
The main results are as follows. 

\begin{theorem}\label{main theorem IF}
Let $\Gamma$ be a simple gain graph. 
Then the following statements are equivalent. 
\begin{enumerate}[(1)]
\item $\mathbf{c}\A(\Gamma)$ is inductively free along edges. 
\item $\B(\Gamma)$ is inductively free along edges.
\end{enumerate}
\end{theorem}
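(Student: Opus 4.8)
The plan is to set up a dictionary between the elementary operations used to build $\mathbf{c}\A(\Gamma)$ and $\B(\Gamma)$ inductively, and to show that these operations are synchronized. The key observation is that both arrangements are controlled by the same combinatorial data of the gain graph $\Gamma$, and that ``inductive freeness along edges'' (Definition \ref{definition IF along edges}) is by design an induction in which at each step one removes a single hyperplane corresponding to a single edge $[i,j,g]$ of $\Gamma$. So I would first unwind the definition to say precisely what a witnessing filtration looks like: a sequence of gain graphs $\Gamma = \Gamma_{0} \supset \Gamma_{1} \supset \cdots$ obtained by deleting one edge at a time, together with the requirement that at each step the triple (deletion, the current arrangement, the restriction) has exponents satisfying the addition–deletion numerics, and that the restriction is itself free. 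The heart of the matter is then to compare, for a fixed edge $H = [i,j,g]$, the restricted arrangement $\mathbf{c}\A(\Gamma)^{H}$ with $\B(\Gamma)^{H}$.

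Concretely, I would carry out the following steps. First, establish that deleting the hyperplane associated to an edge $e$ from $\mathbf{c}\A(\Gamma)$ (respectively $\B(\Gamma)$) yields exactly $\mathbf{c}\A(\Gamma \setminus e)$ (respectively $\B(\Gamma \setminus e)$); this is immediate from the definitions, but it is what lets the induction close on the class of gain-graphic arrangements. Second — the substantive step — analyze the restriction. Restricting $\mathbf{c}\A(\Gamma)$ to the hyperplane $\{x_{i} - x_{j} = z\}$ (in cone coordinates) should identify the two vertices $i$ and $j$ into a single vertex carrying a shifted gain, producing a new gain graph on $\ell - 1$ vertices (with possible parallel edges that collapse, contributing only multiplicities); I expect the analogous restriction of $\B(\Gamma)$ to $\{x_{i} - q^{g}x_{j} = 0\}$ to produce the bias arrangement of the \emph{same} contracted gain graph, up to the trivial relabelling $x_{i} \mapsto q^{g}x_{j}$ and up to hyperplanes that become equal or disappear. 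Thus $\mathbf{c}\A(\Gamma)^{H}$ and $\B(\Gamma)^{H}$ are themselves (cones of) arrangements of the same gain-graphic type, and I would arrange the bookkeeping so that ``$\mathbf{c}\A$ inductively free along edges'' and ``$\B$ inductively free along edges'' are detected by literally the same recursion on $\Gamma$. Third, match the exponents: by Theorem \ref{Athanasiadis-Bailey free} and the examples, and more fundamentally because the characteristic polynomials of $\mathbf{c}\A(\Gamma)$ and $\B(\Gamma)$ agree (both count points via the gain-graphic coloring/finite-field method), the candidate exponent multisets coincide, so the addition–deletion inequality holds for one triple if and only if it holds for the other. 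Finally, run the induction on $|E_{\Gamma}|$ (with the cone dimension as a secondary parameter): the base case is the empty gain graph, where $\mathbf{c}\A$ is Boolean-type and $\B(\Gamma)$ is the Boolean arrangement, both free with matching exponents; the inductive step is precisely the equivalence of the addition–deletion certificates just established, using the induction hypothesis on $\Gamma \setminus e$ for deletion and on the contracted graph for the restriction.

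The main obstacle, I expect, is the restriction step and its bookkeeping: one must show that the restriction $\B(\Gamma)^{H}$ really is the bias arrangement of a contracted gain graph (getting the shift of gains along the identified edge right, and handling coincident hyperplanes and the coordinate hyperplanes $\{x_{i} = 0\}$ correctly), and similarly that $\mathbf{c}\A(\Gamma)^{H}$ is $\mathbf{c}\A$ of the same contracted object — and crucially that these two contractions are \emph{the same} gain graph, so that the two inductions are running in lockstep. A secondary subtlety is the passage to the cone on the $\A$ side versus working directly with the central arrangement $\B(\Gamma)$: I would want a clean lemma saying that restricting a cone $\mathbf{c}\A$ along a non-infinity hyperplane commutes with taking the cone of the restricted affine arrangement, so that the cone functor does not interfere with the edge-by-edge recursion. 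Once these structural lemmas are in place, the equivalence in the theorem follows formally by simultaneous induction, since every inductive-freeness certificate for one side transports to a certificate for the other with the same exponents.
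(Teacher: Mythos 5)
Your proposal follows essentially the same route as the paper: induction on $|E_{\Gamma}|$, using the compatibility of edge deletion/contraction with arrangement deletion/restriction (the paper's Lemmas \ref{deletion of gain graphs} and \ref{contraction of gain graphs}, already built into Definition \ref{definition IF along edges}) and the relation between the characteristic polynomials to transfer the exponent condition from one side to the other. The only imprecision is your claim that the characteristic polynomials of $\mathbf{c}\A(\Gamma)$ and $\B(\Gamma)$ ``agree'' --- in fact $\chi(\mathbf{c}\A(\Gamma),t)=(t-1)\chi(\B(\Gamma),t+1)$ (Lemma \ref{lem:equalbetweencharpoly}), so the exponents are shifted by one rather than equal --- but since the shift is uniform, the containment $\exp(\mathbf{c}\A(\Gamma/e))\subseteq\exp(\mathbf{c}\A(\Gamma\setminus e))$ still holds if and only if $\exp(\B(\Gamma/e))\subseteq\exp(\B(\Gamma\setminus e))$, exactly as in the paper's Corollary \ref{exponents of A and B}, and the argument goes through.
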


\begin{theorem}\label{main theorem DF}
Let $\Gamma$ be a simple gain graph. 
Then the following statements are equivalent. 
\begin{enumerate}[(1)]
\item $\mathbf{c}\A(\Gamma)$ is divisionally free along edges. 
\item $\B(\Gamma)$ is divisionally free along edges.
\end{enumerate}
\end{theorem}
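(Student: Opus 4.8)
The plan is to mirror the proof of Theorem~\ref{main theorem IF}, replacing the Addition--Deletion Theorem by Abe's Division Theorem \cite{abe2016divisionally-im} and inducting on $|E_{\Gamma}|$. Recall from Definition~\ref{definition DF along edges} that divisional freeness along edges is defined recursively: $\mathbf{c}\A(\Gamma)$ and $\B(\Gamma)$ count as divisionally free along edges when $\Gamma$ has no edges (both being Boolean arrangements), and otherwise $\mathbf{c}\A(\Gamma)$ (resp.\ $\B(\Gamma)$) is divisionally free along edges precisely when there is an edge $e\in E_{\Gamma}$ whose hyperplane $H_{e}$ satisfies that the restriction $\mathbf{c}\A(\Gamma)^{H_{e}}$ (resp.\ $\B(\Gamma)^{H_{e}}$) is again divisionally free along edges and that its characteristic polynomial divides that of the ambient arrangement. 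So two ingredients must be prepared: identifying the restriction of the arrangements along an edge, and transferring the divisibility of characteristic polynomials between the two families.

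First I would record that, for an edge $e=[i,j,g]$,
\[
\mathbf{c}\A(\Gamma)^{H_{e}}=\mathbf{c}\A(\Gamma/e),\qquad \B(\Gamma)^{H_{e}}=\B(\Gamma/e),
\]
where $\Gamma/e$ is the contracted simple gain graph obtained from $\Gamma$ by identifying $i$ with $j$, shifting the gains of the edges incident with $i$ by $g$, deleting the loops so created, and keeping one representative of each resulting class of parallel edges. Both identities follow from the substitution $x_{i}\mapsto x_{j}+gz$ (resp.\ $x_{i}\mapsto q^{g}x_{j}$); in particular a parallel edge $[i,j,h]$ with $h\neq g$ restricts onto $\{z=0\}$ (resp.\ onto $\{x_{j}=0\}$), matching the deletion of the loop it would contribute to $\Gamma/e$. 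This is the same contraction used in the proof of Theorem~\ref{main theorem IF}, so the recursion never restricts along a background hyperplane ($\{z=0\}$, resp.\ a coordinate hyperplane), it stays inside the two families, and $\Gamma/e$ has strictly fewer edges than $\Gamma$.

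The arithmetic core is a lemma comparing the characteristic polynomials of the two families. For any simple gain graph $\Gamma$ on $\ell$ vertices with $G_{\Gamma}\in\{\mathbb{Z},\mathbb{F}_{p}\}$ one has
\[
\chi(\mathbf{c}\A(\Gamma),t)=(t-1)\,\chi(\A(\Gamma),t),\qquad \chi(\B(\Gamma),t)=\chi(\A(\Gamma),t-1),
\]
the first being the standard coning identity and the second coming from Zaslavsky's description of the intersection poset of a gain-graphic arrangement (equivalently from the finite-field method, where reparametrising the nonzero coordinates by $x_{i}\mapsto q^{y_{i}}$ turns the point count for $\B(\Gamma)$ into the point count for $\A(\Gamma)$ and accounts for the shift $t\mapsto t-1$). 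Combining this with the previous paragraph, and using that dividing out the factor $t-1$ and substituting $t\mapsto t+1$ are invertible operations on $\mathbb{C}[t]$, one gets for every edge $e$ the equivalences
\[
\chi(\mathbf{c}\A(\Gamma)^{H_{e}},t)\mid\chi(\mathbf{c}\A(\Gamma),t)\iff\chi(\A(\Gamma/e),t)\mid\chi(\A(\Gamma),t)\iff\chi(\B(\Gamma)^{H_{e}},t)\mid\chi(\B(\Gamma),t).
\]
The induction then closes: if $\Gamma$ is edgeless, both arrangements are divisionally free along edges by definition; otherwise, by the recursive definition and the Division Theorem, $\mathbf{c}\A(\Gamma)$ is divisionally free along edges iff some edge $e$ makes $\mathbf{c}\A(\Gamma/e)$ divisionally free along edges and $\chi(\mathbf{c}\A(\Gamma/e),t)\mid\chi(\mathbf{c}\A(\Gamma),t)$, and since $|E_{\Gamma/e}|<|E_{\Gamma}|$ the inductive hypothesis turns the first condition into ``$\B(\Gamma/e)$ is divisionally free along edges'' while the displayed equivalence turns the second into $\chi(\B(\Gamma/e),t)\mid\chi(\B(\Gamma),t)$, which together say that $\B(\Gamma)$ is divisionally free along edges.

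I expect the main obstacle to be establishing the comparison $\chi(\B(\Gamma),t)=\chi(\A(\Gamma),t-1)$ uniformly in the gain group: for $G_{\Gamma}=\mathbb{Z}$ one must make the finite-field count legitimate (choosing $q$ with reduction of suitably large multiplicative order modulo infinitely many primes, or else arguing directly through Zaslavsky's biased-matroid framework), and for $G_{\Gamma}=\mathbb{F}_{p}$ one must match the chosen primitive $p$-th root of unity with the additive gains so that the two intersection posets genuinely correspond. Granted this comparison, the contraction identities, the transfer of divisibility, and the induction are all formal; in fact the divisional argument is slightly shorter than the inductive one behind Theorem~\ref{main theorem IF}, as the Division Theorem involves only a restriction and no deletion.
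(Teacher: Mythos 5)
Your proposal is correct and follows essentially the same route as the paper: induction on $|E_{\Gamma}|$, using the identification of restrictions with contractions (the paper's Lemma \ref{contraction of gain graphs}) and the comparison $\chi(\mathbf{c}\A(\Gamma),t)=(t-1)\chi(\B(\Gamma),t+1)$ (the paper's Lemma \ref{lem:equalbetweencharpoly}) to transfer the divisibility condition between the two families. The only point where you anticipate difficulty --- establishing that comparison uniformly in the gain group --- is handled in the paper by a short deletion--restriction induction on $|E_{\Gamma}|$ rather than the finite-field or biased-matroid arguments you sketch, so no extra work is needed there.
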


Unfortunately, Theorem \ref{main theorem IF} and \ref{main theorem DF} cannot imply Theorem \ref{Athanasiadis-Bailey free} directly since there might be an arrangement that is free but not divisionally free in these classes. 
However, if freeness for one class is characterized, then Theorem \ref{main theorem IF} and \ref{main theorem DF} are useful to characterize freeness for the other class. 
We will give an example in the case $G = \mathbb{F}_{2}$ (see Section \ref{sec:signed}). 

We have a natural question below for freeness.  
However, to answer it completely is hard and we may need a breakthrough in the theory of free arrangements. 
\begin{question}\label{Qustion freeness}
Let $\Gamma$ be a simple gain graph. 
Are the following statements equivalent? 
\begin{enumerate}[(1)]
\item $\mathbf{c}\A(\Gamma)$ is free. 
\item $\B(\Gamma)$ is free. 
\end{enumerate}
If these are equivalent, then is there a conceptual reason? 
\end{question}

We will give a few evidence for Question \ref{Qustion freeness}. 

\begin{theorem}\label{main theorem signed}
When $\Gamma$ is a simple gain graph with gain group $G = \mathbb{F}_{2}$, the statement in Question \ref{Qustion freeness} is true. 
\end{theorem}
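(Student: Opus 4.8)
The plan is to leverage the two main theorems together with a known combinatorial characterization of freeness for signed graphic arrangements in order to close the loop. Recall that when $G_\Gamma = \mathbb{F}_2$ and $q = -1$, the bias arrangement $\mathcal{B}(\Gamma)$ is precisely a signed graphic arrangement (in the sense of the example following Theorem \ref{Athanasiadis-Bailey supersolvable}), whose freeness has been characterized combinatorially in the cited literature (Edelman--Reiner, Zaslavsky, Suyama, Torielli). The first step is to record that characterization in the language of the underlying signed graph: $\mathcal{B}(\Gamma)$ is free if and only if the associated signed graph is ``signed-eliminable'' (equivalently, admits a certain perfect-elimination-type vertex ordering), and moreover in that case $\mathcal{B}(\Gamma)$ is in fact divisionally free. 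The key point to extract is that \emph{for signed graphic arrangements, freeness already coincides with divisional freeness}, because the known free ones are all built up by the division-theorem-compatible elimination orderings.

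Second, I would verify that such an elimination ordering of the signed graph translates into $\mathcal{B}(\Gamma)$ being \emph{divisionally free along edges} in the precise sense of Definition \ref{definition DF along edges}: the vertex-by-vertex elimination process, when read off on the hyperplane level, is exactly a sequence of restrictions to coordinate-type flats compatible with Abe's division theorem, and each such step removes the hyperplanes attached to one vertex, which is an ``edge-indexed'' operation. Thus for $G = \mathbb{F}_2$ we obtain the chain of implications: $\mathcal{B}(\Gamma)$ free $\iff$ $\mathcal{B}(\Gamma)$ divisionally free $\iff$ $\mathcal{B}(\Gamma)$ divisionally free along edges. The forward directions of the last two equivalences are immediate; the reverse directions use that divisional freeness along edges is a special case of divisional freeness, which implies freeness by the inclusions in the Introduction.

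Third, apply Theorem \ref{main theorem DF}: $\mathcal{B}(\Gamma)$ is divisionally free along edges if and only if $\mathbf{c}\mathcal{A}(\Gamma)$ is divisionally free along edges. It then remains to run the same coincidence on the $\mathcal{A}$ side, i.e. to argue that for $G = \mathbb{F}_2$, $\mathbf{c}\mathcal{A}(\Gamma)$ being free is equivalent to its being divisionally free along edges. One direction is automatic; for the other, the obstruction analysis for the affinographic arrangement over $\mathbb{F}_2$ can be matched, induced-subgraph by induced-subgraph, with the signed-eliminability criterion already used for $\mathcal{B}(\Gamma)$ — the two graphs carry the same combinatorial data in this case. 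Concatenating the equivalences yields $\mathbf{c}\mathcal{A}(\Gamma)$ free $\iff$ $\mathcal{B}(\Gamma)$ free, which is the statement in Question \ref{Qustion freeness} for $G = \mathbb{F}_2$.

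The main obstacle is the last step: showing that, on the $\mathbf{c}\mathcal{A}(\Gamma)$ side, freeness cannot strictly exceed divisional-freeness-along-edges when $G = \mathbb{F}_2$. This is where a genuine argument is needed rather than bookkeeping, since in general (as the text warns) free arrangements in these families need not be divisionally free. I expect to handle it by transporting the combinatorial freeness criterion known for the $\mathbb{F}_2$ bias arrangements through the equivalence of Theorem \ref{main theorem DF} backwards — that is, using that $\mathcal{B}(\Gamma)$ free forces divisional freeness along edges, hence forces the same for $\mathbf{c}\mathcal{A}(\Gamma)$, and then checking that no ``extra'' free $\mathbf{c}\mathcal{A}(\Gamma)$ exists by a direct obstruction argument (Ziegler restriction or the Euler/characteristic-polynomial factorization test) on the minimal non-signed-eliminable signed graphs. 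Once that finite list of obstructions is shown to destroy freeness of $\mathbf{c}\mathcal{A}(\Gamma)$ as well, the equivalence is complete.
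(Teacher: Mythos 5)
Your outline is essentially the paper's proof: one direction follows from the fact that a free signed graphic arrangement $\mathcal{B}(\Gamma)$ is automatically divisionally free along edges (this is exactly the content of Theorem \ref{freeness of signed graphic arrangement}, so no new ``elimination ordering'' argument is needed --- just cite it), combined with Theorem \ref{main theorem DF}; the other direction amounts to showing that every forbidden induced subgraph for freeness of $\mathcal{B}(\Gamma)$ also destroys freeness of $\mathbf{c}\mathcal{A}(\Gamma)$, after reducing to the induced subgraph via Proposition \ref{free localization}.

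The gap is in how you propose to execute that last step. The obstruction set is not a ``finite list'': it consists of the balanced cycles of length at least four, the unbalanced cycles of length at least three, and the single graph of Figure \ref{Fig:obstruction signed graph}, so two of the three families are infinite and must be handled uniformly. For balanced cycles one reduces (after switching) to a non-chordal graphic arrangement over $\mathbb{F}_{2}$, which is non-free by the classical result; a characteristic-polynomial test is not what does the work there. For unbalanced cycles, the paper verifies non-factorization of $\chi$ only for length three, where $\chi(\mathcal{A}(\Sigma),t)=t(t^{2}-3t+3)$, and then propagates non-freeness to longer cycles by induction: deleting an edge gives a path (free), contracting gives a shorter unbalanced cycle (non-free by induction), and Proposition \ref{restriction theorem} yields the contradiction. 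Your plan of applying Theorem \ref{factorization theorem} directly to each ``minimal'' obstruction would require computing $\chi$ for every cycle length, which is doable but is genuinely more work than a finite check; as written, the proposal does not contain the argument that closes this infinite family. The remaining finite case, the graph of Figure \ref{Fig:obstruction signed graph}, is correctly disposed of by the factorization test, since $\chi(\mathcal{A}(\Sigma),t)=t(t-2)(t^{2}-6t+10)$.
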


\begin{theorem}\label{main theorem 3dim}
Let $\Gamma$ be a simple gain graph on $3$ vertices with gain group $G = \mathbb{Z}$. 
Suppose that $q$ is a transcendental number or a positive real number other than $1$. 
Then $\mathrm{c}\mathcal{A}(\Gamma)$ is free if and only if $\mathcal{B}(\Gamma)$ is free. 
\end{theorem}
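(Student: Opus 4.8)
The strategy is to reduce the statement to the already-established equivalence for divisional freeness along edges. By Theorem~\ref{main theorem DF} it suffices to prove the following two claims, one for each family: \emph{if $\Gamma$ has three vertices, then $\mathbf{c}\mathcal{A}(\Gamma)$ is free if and only if it is divisionally free along edges, and likewise for $\mathcal{B}(\Gamma)$}. Indeed, granting these, a free $\mathbf{c}\mathcal{A}(\Gamma)$ is divisionally free along edges, hence so is $\mathcal{B}(\Gamma)$ by Theorem~\ref{main theorem DF}, hence $\mathcal{B}(\Gamma)$ is free; and symmetrically. The implication ``divisionally free along edges $\Rightarrow$ free'' is immediate from the definition (Definition~\ref{definition DF along edges}), being a restricted instance of a chain of division steps; the content is the reverse implication, i.e.\ that on three vertices freeness already propagates through edge deletions.

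To prove this I would use the fact that both arrangements are, up to an inessential direct factor, rank-$3$ central arrangements ($\mathcal{A}(\Gamma)\subset\mathbb{Q}^3$ has every hyperplane containing the direction $(1,1,1)$, so $\mathbf{c}\mathcal{A}(\Gamma)$ has rank $3$, and $\mathcal{B}(\Gamma)\subset\mathbb{C}^3$ has rank at most $3$), together with Yoshinaga's criterion: a rank-$3$ central arrangement $\mathcal{C}$ is free if and only if $\chi(\mathcal{C};t)$ factors as $(t-1)(t-e_1)(t-e_2)$ over $\mathbb{Z}_{\ge 0}$ and $(1,e_1,e_2)$ equals the exponents of the Ziegler restriction of $\mathcal{C}$ to some (equivalently, any) of its hyperplanes; in particular rank-$3$ freeness is a combinatorial property. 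Next I would normalize $\Gamma$: switching the gain graph at a vertex only translates $\mathcal{A}(\Gamma)$ and rescales $\mathcal{B}(\Gamma)$ diagonally, hence preserves freeness, characteristic polynomials and Ziegler restrictions, and, combined with relabelling the three vertices, this reduces to a short list of normal forms described by the sizes $(a,b,c)$ of the three pencils of parallel edges and the configuration of \emph{balanced triangles} (one edge from each pencil with gains summing to $0$). Because $q$ is transcendental or a positive real $\ne 1$, the multiplicative relations among the scalars $q^{g}$ are exactly the additive relations among the integers $g$, so the intersection lattice of $\mathcal{B}(\Gamma)$ is the frame matroid of $\Gamma$ (coordinate hyperplanes together with pencils $x_i=q^{g}x_j$ through the lines $x_i=x_j=0$, a balanced triangle giving a rank-$2$ flat of size three), while that of $\mathbf{c}\mathcal{A}(\Gamma)$ is the complete lift matroid ($z=0$ together with pencils $x_i-x_j=gz$ through the flats $\{x_i=x_j,\,z=0\}$, with the same balance condition). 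Both characteristic polynomials thus become explicit functions of $a,b,c$ and of the incidences among balanced triangles.

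With these formulas I would argue case by case over the normal forms. If $\chi(\mathbf{c}\mathcal{A}(\Gamma);t)$ fails to factor over $\mathbb{Z}_{\ge 0}$, the arrangement is non-free by Terao's factorization theorem, and one checks that $\chi(\mathcal{B}(\Gamma);t)$ then fails as well, so there is nothing to prove; otherwise I would exhibit an edge $e$ of $\Gamma$ such that $\mathbf{c}\mathcal{A}(\Gamma\setminus e)$ is again free and the characteristic polynomial of the restriction $\mathbf{c}\mathcal{A}(\Gamma)^{H_e}$ — automatically free, being of rank $2$ — divides $\chi(\mathbf{c}\mathcal{A}(\Gamma);t)$, so that $e$ is a legitimate step of a division chain along edges (Abe's division theorem). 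Iterating down to a finite list of base cases (three-vertex gain graphs admitting no such edge, to be handled directly) yields divisional freeness along edges whenever $\mathbf{c}\mathcal{A}(\Gamma)$ is free; the same analysis, run on the frame-matroid side, does it for $\mathcal{B}(\Gamma)$.

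The main obstacle is this last step: proving that a free rank-$3$ arrangement in either family always admits a divisionally good edge, uniformly in the unbounded parameters $(a,b,c)$ — that is, that ``free'' cannot get stuck strictly above ``divisionally free along edges'' on three vertices. The supporting difficulty is the bookkeeping of balanced triangles, which may share edges and thereby complicate the multiset of rank-$2$ flat sizes entering the characteristic polynomials; a carefully chosen normal form, together with the observation that deleting a parallel edge changes $\chi$ in a controlled way, should keep the case analysis finite and the divisibility checks routine. One should also verify the easy direction, namely that the paper's notion of divisional freeness along edges (Definition~\ref{definition DF along edges}) does imply ordinary freeness, which follows by unwinding the definition.
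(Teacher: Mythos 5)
Your reduction to Theorem~\ref{main theorem DF} leaves the central claim unproved: you yourself flag ``free $\Rightarrow$ divisionally free along edges on three vertices'' as the main obstacle, but this is exactly the content of the theorem in your framing, and nothing in the proposal establishes it. It is not a routine case check: for a free $\mathbf{c}\mathcal{A}(\Gamma)$ with exponents $(0,1,d_1,d_2)$ one needs an edge $e$ with $\chi(\mathbf{c}\mathcal{A}(\Gamma/e),t)$ dividing $\chi(\mathbf{c}\mathcal{A}(\Gamma),t)$, and freeness of $\mathcal{A}$ alone does not force the exponents of any restriction to sit inside $\exp(\mathcal{A})$ --- indeed the paper explicitly warns in the introduction that free arrangements in these classes need not be divisionally free, so the reduction cannot be taken for granted even in rank $3$. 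A second, independent gap is the assertion that ``rank-$3$ freeness is a combinatorial property'': Yoshinaga's criterion (Lemma~\ref{lem:Yoshinaga}) reduces freeness to the exponents of a $2$-dimensional \emph{multi}arrangement, and those exponents are not determined by combinatorics --- they depend on the positions of the lines. This is precisely where the hypothesis that $q$ is transcendental or a positive real $\ne 1$ (strictly stronger than ``not a root of unity,'' which already gives you the intersection lattice) must enter, and your proposal never uses it.

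The paper's route avoids the divisional-freeness detour entirely: it applies Lemma~\ref{lem:Yoshinaga} to both $\mathrm{c}\mathcal{A}(\Gamma)$ and $\mathcal{B}(\Gamma)$, computes the Ziegler restriction of $\mathrm{c}\mathcal{A}(\Gamma)$ to $\{z=0\}$ (a multiplicity $(m_1,m_2,m_3)$ on the three lines of $\Cox(3)$, whose exponents are given by Wakamiko's formula, Lemma~\ref{lem:2-dim exponents}) and of $\mathcal{B}(\Gamma)$ to $\{x_3=0\}$ (the multiarrangement $x_1^{m_3+1}x_2^{m_2+1}\prod_{g\in\Lambda_1}(x_1-q^gx_2)$), and proves in Lemma~\ref{lem:exponents} that the latter's exponents are the former's shifted by $1$. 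That lemma is the real work: a putative low-degree logarithmic derivation leads to a linear system whose determinant is a Schur polynomial times a Vandermonde evaluated at the $q^{g_i}$, and the positivity of Schur coefficients together with $q$ transcendental or positive real forces nonvanishing. Combined with Lemma~\ref{lem:equalbetweencharpoly} matching the characteristic polynomials, Yoshinaga's criterion then holds for one arrangement iff it holds for the other. If you want to salvage your approach, you would need to supply the missing ``always a divisionally good edge'' argument, which in effect requires the same exponent computations anyway.
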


The organization of this paper is as follows. 

In Section \ref{sec:preliminaries}, we review the theory of free arrangements and gain graphs. 
In Section \ref{sec:proof of main theorems}, we give the proofs of Theorems \ref{main theorem IF} and \ref{main theorem DF}. 
As a corollary, we prove that the arrangement in Example \ref{ex:DMS} is free (Corollary \ref{Fuss-Catalan free}). 
In Section \ref{sec:signed}, we review the characterization of freeness for $\mathcal{B}(\Gamma)$ when $G_{\Gamma} = \mathbb{F}_{2}$ and characterize freeness for the other arrangements $\mathbf{c}\mathcal{A}(\Gamma)$ as an application (Theorem \ref{main theorem signed}). 
In Section \ref{sec:3dim}, we review the theory of freeness of multiarrangements in dimension $2$ and give a proof of Theorem \ref{main theorem 3dim}.

\section{Preliminaries}\label{sec:preliminaries}
\subsection{Characteristic polynomials of hyperplane arrangements}
Let $ \mathbb{K} $ be an arbitrary field, $ \mathcal{A} $ an arrangement in the $ \ell $-dimensional vector space $ \mathbb{K}^{\ell} $. 
Our main reference on the theory of hyperplane arrangement is \cite{orlik1992arrangements}. 

Define the \textbf{intersection poset} $ L(\mathcal{A}) $ of $\mathcal{A}$ by 
\begin{align*}
L(\mathcal{A}) \coloneqq \Set{\bigcap_{H\in \mathcal{B}}H \neq \varnothing | \mathcal{B} \subseteq \mathcal{A}}
\end{align*}
with the partial order defined by reverse inclusion: $X \leq Y \stackrel{\text{def}}{\Leftrightarrow} X \supseteq Y$. 
Note that when $ \mathcal{B} $ is empty, the intersection over $ \mathcal{B} $ is the ambient vector space $ \mathbb{K}^{\ell} $.

The one-variable M\"{o}ebius function $ \mu \colon L(\mathcal{A}) \to \mathbb{Z} $ is defined recursively by 
\begin{align*}
\mu(\mathbb{K}^{\ell}) \coloneqq 1 
\quad \text{ and } \quad 
\mu(X) \coloneqq -\sum_{Y < X}\mu(Y). 
\end{align*}
The \textbf{characteristic polynomial} $\chi(\mathcal{A},t) $  of $\mathcal{A}$ is defined by 
\begin{align*}
\chi(\mathcal{A},t) \coloneqq \sum_{X \in L(\mathcal{A})}\mu(X)t^{\dim X}. 
\end{align*}

\begin{remark}
If $\mathcal{A}$ is central, then $t-1$ divides $\chi(\mathcal{A}, t)$. 
Also note that since $\mathcal{A}(\Gamma)$ is non-essential, that is, every maximal element in $L(\mathcal{A}(\Gamma))$ has dimension greater than $0$, $\chi(\mathcal{A}(\Gamma),t)$ is divisible by $t$. 
\end{remark}

For any $X\in L(\A)$ define the \textbf{localization} of $\A$ to $X$ as the subarrangement $\A_X$ of $\A$ by 
\begin{align*}
\A_X=\{H\in\A~|~X\subseteq H\}.
\end{align*}
For each hyperplane $ H \in \mathcal{A} $, we define the \textbf{restriction} $ \mathcal{A}^{H} $ by 
\begin{align*}
\mathcal{A}^{H} &\coloneqq \Set{H \cap K | K \in \mathcal{A}\setminus\{H\}}. 
\end{align*}
Note that $ \mathcal{A}^{H} $ is an arrangement in $ H $.

\begin{proposition}[{\cite[Corollary 2.57]{orlik1992arrangements}}]
\label{deletion-restriction formula}
Let $\mathcal{A}$ be an arrangement and $H \in \mathcal{A}$. 
Then
\begin{align*}
\chi(\mathcal{A}, t) = \chi(\mathcal{A}^{\prime}, t) - \chi(\mathcal{A}^{H},t), 
\end{align*}
where $\mathcal{A}^{\prime}$ denotes the \textbf{deletion} $\mathcal{A}^{\prime} \coloneqq \mathcal{A}\setminus\{H\}$. 
\end{proposition}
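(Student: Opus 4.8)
The plan is to argue directly from the Möbius-function definition of $\chi$ by comparing the three intersection posets $L(\mathcal{A})$, $L(\mathcal{A}')$ with $\mathcal{A}'=\mathcal{A}\setminus\{H\}$, and $L(\mathcal{A}^{H})$. First I would record the structural relations between them. Since every intersection of hyperplanes of $\mathcal{A}'$ is also an intersection of hyperplanes of $\mathcal{A}$, one has $L(\mathcal{A}')\subseteq L(\mathcal{A})$; moreover, if $X\in L(\mathcal{A})$ is written as $X=\bigcap_{K\in\mathcal{B}}K$ with $X\not\subseteq H$, then necessarily $H\notin\mathcal{B}$, so $X\in L(\mathcal{A}')$. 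Hence $L(\mathcal{A})\setminus L(\mathcal{A}')$ is contained in $\{X:X\subseteq H\}$, and the elements not contained in $H$ are literally the same in both posets. On the other side, the inclusion-reversing, dimension-preserving bijection $X\mapsto X$ identifies $\{X\in L(\mathcal{A}):X\subseteq H\}$ with $L(\mathcal{A}^{H})$, the bottom of $L(\mathcal{A}^{H})$ corresponding to $H$ itself; under this identification the Möbius value $\mu_{\mathcal{A}^{H}}(X)$ equals the two-variable Möbius number $\mu_{\mathcal{A}}(H,X)$ of the interval $[H,X]$ in $L(\mathcal{A})$.

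The second step isolates where the two characteristic polynomials can possibly disagree. For $X\not\subseteq H$ the order ideal $\{W\in L(\mathcal{A}):W\supseteq X\}$ consists entirely of subspaces not contained in $H$ (anything containing $X$ cannot lie in $H$), so it coincides with the corresponding ideal in $L(\mathcal{A}')$; therefore $\mu_{\mathcal{A}}(X)=\mu_{\mathcal{A}'}(X)$ for all such $X$. Consequently the contributions of the subspaces $X\not\subseteq H$ to $\chi(\mathcal{A},t)$ and to $\chi(\mathcal{A}',t)$ cancel, and, using the identification of the third poset, the whole statement reduces to
\begin{align*}
\sum_{\substack{X\in L(\mathcal{A})\\ X\subseteq H}}\bigl(\mu_{\mathcal{A}}(X)+\mu_{\mathcal{A}}(H,X)\bigr)\,t^{\dim X}
=\sum_{\substack{X\in L(\mathcal{A}')\\ X\subseteq H}}\mu_{\mathcal{A}'}(X)\,t^{\dim X}.
\end{align*}

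The heart of the matter, and what I expect to be the main obstacle, is the pointwise Möbius identity: for every $X\in L(\mathcal{A})$ with $X\subseteq H$,
\begin{align*}
\mu_{\mathcal{A}}(X)+\mu_{\mathcal{A}}(H,X)=
\begin{cases}
\mu_{\mathcal{A}'}(X) & \text{if } X\in L(\mathcal{A}'),\\
0 & \text{if } X\notin L(\mathcal{A}').
\end{cases}
\end{align*}
I would prove this by induction on the rank of $X$ in $L(\mathcal{A})$, the base case being the unique atom contained in $H$, namely $X=H$, where $H\notin L(\mathcal{A}')$ and $\mu_{\mathcal{A}}(H)+\mu_{\mathcal{A}}(H,H)=-1+1=0$. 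In the inductive step I expand each Möbius number through its defining recursion and split the resulting sums according to whether the intermediate subspace $W\supsetneq X$ is contained in $H$ or not: the terms with $W\not\subseteq H$ all live in $L(\mathcal{A}')$ with matching values by the second step, and the terms with $W\subseteq H$ are controlled by the inductive hypothesis. The genuinely delicate case is $X\notin L(\mathcal{A}')$, where the $\mu_{\mathcal{A}'}$ term is absent and the leftover quantity $\sum_{W\in L(\mathcal{A}'),\,W\supseteq X}\mu_{\mathcal{A}'}(W)$ must be shown to vanish. Here I would exploit that $L(\mathcal{A}')$ is a meet-semilattice: letting $Y$ be the intersection of all hyperplanes of $\mathcal{A}'$ containing $X$, that sum runs over the entire interval $[\mathbb{K}^{\ell},Y]$ in $L(\mathcal{A}')$ and equals $0$ by the defining property of $\mu$, because $X\subsetneq H$ forces at least one hyperplane of $\mathcal{A}'$ to contain $X$ and hence $Y\neq\mathbb{K}^{\ell}$.

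Finally, as corroboration and as a shortcut that sidesteps the Möbius bookkeeping, I would note the finite-field interpretation: over a finite field the characteristic polynomial evaluates to the number of points in the complement, so deletion--restriction becomes the inclusion--exclusion identity $\#(\text{off }\mathcal{A})=\#(\text{off }\mathcal{A}')-\#(\text{on }H,\text{ off the rest})$, the last count being exactly the complement count for $\mathcal{A}^{H}$ inside $H$; realizing the same intersection poset over sufficiently large finite fields then yields the identity at infinitely many integers and hence as polynomials. I would keep the Möbius argument as the primary proof, since it matches the definition used here and is valid over an arbitrary field $\mathbb{K}$, and invoke the finite-field picture only as a cross-check.
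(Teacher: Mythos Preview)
The paper does not supply its own proof of this proposition; it is quoted as \cite[Corollary 2.57]{orlik1992arrangements} and used as a black box. Your argument is correct and is essentially the standard M\"obius-function proof one finds in that reference: partition $L(\mathcal{A})$ according to containment in $H$, identify the upper set $\{X\subseteq H\}$ with $L(\mathcal{A}^{H})$, and prove the pointwise identity $\mu_{\mathcal{A}}(X)+\mu_{\mathcal{A}}(H,X)=\mu_{\mathcal{A}'}(X)$ (or $0$) by induction on rank. The delicate step you flag, showing that $\sum_{W\in L(\mathcal{A}'),\,W\supseteq X}\mu_{\mathcal{A}'}(W)=0$ when $X\notin L(\mathcal{A}')$, is handled correctly via the meet $Y=\bigcap\{K\in\mathcal{A}':K\supseteq X\}$; your observation that $X\subsetneq H$ forces $\mathcal{A}_{X}\setminus\{H\}\neq\varnothing$, hence $Y\neq\mathbb{K}^{\ell}$, is exactly what is needed. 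The finite-field cross-check is a nice sanity check but, as you say, not required.
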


\begin{proposition}[{\cite[Proposition 2.51]{orlik1992arrangements}}]\label{cone}
Let $\mathbf{c}\mathcal{A}$ denote the cone over $\mathcal{A}$. 
Then 
\begin{align*}
\chi(\mathbf{c}\mathcal{A}, t) = (t-1)\chi(\mathcal{A},t). 
\end{align*}
\end{proposition}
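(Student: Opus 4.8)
The plan is to prove the identity by induction on the number $|\mathcal{A}|$ of hyperplanes, using \emph{only} the deletion--restriction recursion of Proposition \ref{deletion-restriction formula} together with the fact that coning is compatible with both deletion and restriction. Write $\widehat{H}$ for the homogenization of an affine hyperplane $H \in \mathcal{A}$ and $H_{\infty} = \{z=0\}$ for the hyperplane at infinity, so that $\mathbf{c}\mathcal{A} = \{H_{\infty}\} \cup \Set{\widehat{H} | H \in \mathcal{A}}$. The induction runs over all affine arrangements in all dimensions simultaneously, ordered by $|\mathcal{A}|$.

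For the base case I take $\mathcal{A} = \varnothing$ in $\mathbb{K}^{\ell}$, where $\chi(\mathcal{A},t) = t^{\ell}$ and $\mathbf{c}\mathcal{A} = \{H_{\infty}\}$ is a single hyperplane in $\mathbb{K}^{\ell+1}$, giving $\chi(\mathbf{c}\mathcal{A},t) = t^{\ell+1}-t^{\ell} = (t-1)t^{\ell}$, as required. For the inductive step I fix $H \in \mathcal{A}$ and apply Proposition \ref{deletion-restriction formula} to the cone $\mathbf{c}\mathcal{A}$ at the hyperplane $\widehat{H}$ (not at $H_{\infty}$). The two ingredients I need are the identities $\mathbf{c}\mathcal{A}\setminus\{\widehat{H}\} = \mathbf{c}(\mathcal{A}')$ and $(\mathbf{c}\mathcal{A})^{\widehat{H}} = \mathbf{c}(\mathcal{A}^{H})$, where $\mathcal{A}' = \mathcal{A}\setminus\{H\}$. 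The first is immediate, since deleting $\widehat{H}$ from $\mathbf{c}\mathcal{A}$ leaves exactly $\{H_{\infty}\}\cup\Set{\widehat{K} | K \in \mathcal{A}'}$. Granting both, Proposition \ref{deletion-restriction formula} applied to $\mathbf{c}\mathcal{A}$, followed by the induction hypothesis (valid because $|\mathcal{A}'| = |\mathcal{A}|-1$ and $|\mathcal{A}^{H}| \leq |\mathcal{A}|-1$ are both strictly smaller than $|\mathcal{A}|$), yields
\begin{align*}
\chi(\mathbf{c}\mathcal{A},t)
&= \chi(\mathbf{c}(\mathcal{A}'),t) - \chi(\mathbf{c}(\mathcal{A}^{H}),t)\\
&= (t-1)\chi(\mathcal{A}',t) - (t-1)\chi(\mathcal{A}^{H},t)\\
&= (t-1)\bigl(\chi(\mathcal{A}',t) - \chi(\mathcal{A}^{H},t)\bigr) = (t-1)\chi(\mathcal{A},t),
\end{align*}
where the final equality is Proposition \ref{deletion-restriction formula} for $\mathcal{A}$ itself.

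The main obstacle is the second compatibility identity $(\mathbf{c}\mathcal{A})^{\widehat{H}} = \mathbf{c}(\mathcal{A}^{H})$, because restriction interacts subtly with the hyperplane at infinity. The delicate point is the behaviour of hyperplanes $K \in \mathcal{A}$ parallel to $H$: such a $K$ satisfies $H \cap K = \varnothing$ and so contributes nothing to the affine restriction $\mathcal{A}^{H}$, yet $\widehat{H}\cap\widehat{K}$ is a genuine codimension-one flat of $\widehat{H}$. A short computation with the homogenized equations shows $\widehat{H}\cap\widehat{K} = \widehat{H}\cap H_{\infty}$ for every such $K$, so all these restrictions collapse onto the single hyperplane $\widehat{H}\cap H_{\infty}$, which is precisely the hyperplane at infinity that coning $\mathcal{A}^{H}$ supplies; for $K$ not parallel to $H$, $\widehat{H}\cap\widehat{K}$ is the homogenization of $H\cap K$. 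Verifying this collapse carefully, together with the degenerate cases where $\mathcal{A}^{H}$ or $\mathcal{A}'$ is empty, is the only genuinely technical part, after which the induction closes immediately. An alternative, non-inductive route would be to split $L(\mathbf{c}\mathcal{A})$ into the flats meeting $\{z=1\}$ (which biject with $L(\mathcal{A})$ with a dimension shift of $+1$ and contribute $t\,\chi(\mathcal{A},t)$) and those contained in $H_{\infty}$; but matching the M\"obius values of the latter family against $-\chi(\mathcal{A},t)$ requires more bookkeeping than the deletion--restriction induction, so I would favour the inductive argument.
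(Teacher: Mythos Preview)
The paper does not prove this proposition: it is quoted verbatim from \cite[Proposition 2.51]{orlik1992arrangements} and used as a black box, so there is no in-paper argument to compare against.

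Your inductive proof via deletion--restriction is correct. The base case is straightforward, and the key compatibility $(\mathbf{c}\mathcal{A})^{\widehat{H}} \cong \mathbf{c}(\mathcal{A}^{H})$ is exactly right, including your handling of the subtle point: hyperplanes $K$ parallel to $H$ have $\widehat{K}\cap\widehat{H} = H_{\infty}\cap\widehat{H}$, so they all collapse onto the hyperplane at infinity that coning $\mathcal{A}^{H}$ already supplies, and nothing is lost or double-counted. The only cosmetic point is that the equality $(\mathbf{c}\mathcal{A})^{\widehat{H}} = \mathbf{c}(\mathcal{A}^{H})$ is really a linear isomorphism of arrangements (identifying $\widehat{H}$ with the cone over the affine space $H$), but this is harmless for characteristic polynomials. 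For what it is worth, the proof in Orlik--Terao goes the non-inductive route you sketch at the end, splitting $L(\mathbf{c}\mathcal{A})$ according to whether a flat is contained in $H_{\infty}$ and tracking M\"obius values directly; your deletion--restriction induction is a perfectly good alternative and arguably cleaner.
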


\subsection{Freeness of hyperplane arrangements}
In this subsection, we suppose that $\mathcal{A}$ is central. 
Let $ S = \mathbb{K}[x_{1}, \dots, x_{\ell}] $ be the ring of polynomial functions on $ \mathbb{K}^{\ell} $ and $ \Der(S) $ the module of derivations of $ S $. 
Namely, 
\begin{align*}
\Der(S) \coloneqq \Set{\theta \colon S \to S | \text{ $ \theta $ is $ \mathbb{K} $-linear and } \theta(fg) = \theta(f)g + f\theta(g) \text{ for any } f,g \in S}. 
\end{align*}
Given a map $m \colon \mathcal{A} \to \mathbb{Z}_{>0}$,  we call the pair $(\mathcal{A}, m)$ a \textbf{multiarrangement}. 
Define the \textbf{module of logarithmic derivations} $D(\mathcal{A}, m)$ by 
\begin{align*}
D(\A, m) \coloneqq \Set{\theta \in \Der(S) | \theta(\alpha_{H}) \in \alpha_{H}^{m(H)}S \text{ for any } H \in \A}, 
\end{align*}
where $ \alpha_{H} \in (\mathbb{K}^{\ell})^{\ast} $ denotes a defining linear form of a hyperplane $ H \in \A $. 
Note that $ D(\A, m) $ is a graded $ S $-module. 
We identify $\mathcal{A}$ with the multiarrangement $(\mathcal{A},\boldsymbol{1})$, where $\boldsymbol{1}$ is a map identically $1$ and $D(\mathcal{A}) \coloneqq D(\mathcal{A}, \boldsymbol{1})$. 
\begin{definition}\label{definition freeness}
A multiarrangement $(\A, m)$ is said to be \textbf{free with exponents $\expon(\A, m)=(d_1,\dots,d_\ell)$} 
if $D(\A,m)$ is a free $S$-module and $D(\A,m)\cong\bigoplus_{i=1}^\ell S(-d_i)$. 
\end{definition}

\begin{proposition}[{\cite[Corollary 7]{ziegler1989multiarrangements-sci}}{\cite[Proposition1.21]{yoshinaga2014freeness-adlfdsdtm}}]
If $\ell = 2$, then $(\mathcal{A}, m)$ is free and the exponents $(d_{1}, d_{2})$ satisfies $d_{1}+d_{2} = |m|$. 
\end{proposition}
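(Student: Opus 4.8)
The plan is to prove that for a central arrangement $(\mathcal{A}, m)$ in a $2$-dimensional space, the module $D(\mathcal{A}, m)$ is free of rank $2$ with exponents summing to $|m| \coloneqq \sum_{H \in \mathcal{A}} m(H)$. The standard approach rests on the fact that $S = \mathbb{K}[x_{1}, x_{2}]$ is a polynomial ring in two variables, hence every finitely generated torsion-free graded $S$-module that is also reflexive is automatically free — in dimension two, reflexive and free coincide because $S$ is a $2$-dimensional regular local (graded) ring. So the first step is to observe that $D(\mathcal{A}, m)$ is a reflexive $S$-module: it is the intersection inside $\Der(S) \cong S^{2}$ of the submodules $\Set{\theta | \theta(\alpha_{H}) \in \alpha_{H}^{m(H)} S}$, each of which is reflexive (being the kernel of $S^{2} \to S/\alpha_{H}^{m(H)}S$, a torsion-free cokernel), and an intersection of reflexive submodules of a fixed free module is reflexive. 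Combined with the Auslander–Buchsbaum formula (depth plus projective dimension equals $2$, and reflexive modules over a $2$-dimensional regular ring have depth $2$), this forces projective dimension $0$, i.e. freeness.

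Next I would pin down the rank. Since $D(\mathcal{A}, m) \otimes_{S} \operatorname{Frac}(S)$ contains the two coordinate derivations $\alpha_{H}^{m(H)} \partial_{i}$ after clearing denominators — more cleanly, $D(\mathcal{A}, m)$ contains $Q \cdot \Der(S)$ where $Q = \prod_{H} \alpha_{H}^{m(H)}$ — the rank is exactly $2$. So $D(\mathcal{A}, m) \cong S(-d_{1}) \oplus S(-d_{2})$ for some non-negative integers $d_{1}, d_{2}$; freeness with exponents is established. The remaining point is the identity $d_{1} + d_{2} = |m|$. For this I would invoke Saito's criterion in its graded form: a homogeneous basis $\theta_{1}, \theta_{2}$ of $D(\mathcal{A}, m)$ is characterized by the condition that the "coefficient determinant" $\det\bigl(\theta_{i}(x_{j})\bigr)_{1 \le i,j \le 2}$ is a nonzero scalar multiple of $Q = \prod_{H \in \mathcal{A}} \alpha_{H}^{m(H)}$. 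Taking degrees on both sides of $\det(\theta_{i}(x_{j})) \doteq Q$ gives $\deg \theta_{1} + \deg \theta_{2} = \deg Q = |m|$, and since $\theta_{i}$ has degree $d_{i}$ this is precisely $d_{1} + d_{2} = |m|$.

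The one technical point requiring care — and the likely main obstacle in a fully rigorous write-up — is justifying that Saito's criterion applies to multiarrangements and that the coefficient determinant of \emph{any} homogeneous basis is forced to equal $Q$ up to a scalar (rather than merely dividing it or being divisible by it). The containment "$\det$ is divisible by each $\alpha_{H}^{m(H)}$" is the easy direction: evaluating the defining condition $\theta_{i}(\alpha_{H}) \in \alpha_{H}^{m(H)}S$ along $H$ shows the rows of the coefficient matrix, read in coordinates adapted to $\alpha_{H}$, become divisible appropriately, so $\alpha_{H}^{m(H)} \mid \det$; since the $\alpha_{H}$ are pairwise non-proportional, $Q \mid \det$. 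The reverse — that $\det$ is not a proper multiple of $Q$ — follows from a dimension/degree count against the known rank, or alternatively from the fact that $Q \cdot \Der(S) \subseteq D(\mathcal{A}, m)$ already realizes determinant exactly $Q^{1} \cdot \det(\text{id})$-scaled, forcing equality up to scalar for a minimal generating set. I would cite \cite{ziegler1989multiarrangements-sci} for the multiarrangement version of Saito's criterion and for the freeness in $\ell = 2$, treating the degree bookkeeping as the routine verification it is, since the substance of the statement is exactly the cited result.
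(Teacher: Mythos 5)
The paper states this proposition purely as a citation (Ziegler; Yoshinaga) and gives no proof, and your argument follows the standard route from those sources: reflexivity forces freeness in dimension two, the rank is two because $Q\cdot\Der(S)\subseteq D(\mathcal{A},m)\subseteq\Der(S)$ with $Q=\prod_{H}\alpha_{H}^{m(H)}$, and the multiarrangement Saito criterion converts $\deg Q$ into $d_{1}+d_{2}$. The overall structure is sound, but two of your stated justifications are wrong as written, even though the conclusions they support are true.

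First, $S/\alpha_{H}^{m(H)}S$ is a torsion module, not torsion-free, so ``a torsion-free cokernel'' cannot be the reason the kernel of $S^{2}\to S/\alpha_{H}^{m(H)}S$ is reflexive. The correct reason is stronger: this map is surjective (some $\partial_{i}(\alpha_{H})$ is a nonzero scalar), and $S/\alpha_{H}^{m(H)}S$ has projective dimension one, so the kernel is projective, hence graded free, hence reflexive; your intersection-of-reflexives argument then goes through. In fact one can shortcut the whole proof: the map $S^{2}\to\bigoplus_{H}S/\alpha_{H}^{m(H)}S$ sending $\theta\mapsto(\theta(\alpha_{H}))_{H}$ is surjective by the Chinese remainder theorem (the $\alpha_{H}$ are pairwise non-proportional), its kernel is exactly $D(\mathcal{A},m)$, and the target has projective dimension one, so $D(\mathcal{A},m)$ is free; comparing Hilbert series in the resulting short exact sequence gives $t^{d_{1}}+t^{d_{2}}=2-|\mathcal{A}|+\sum_{H}t^{m(H)}$, and differentiating at $t=1$ yields $d_{1}+d_{2}=|m|$ with no appeal to Saito's criterion. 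Second, your proposed justification that the coefficient determinant of a basis equals $Q$ up to a nonzero scalar does not work: the coefficient determinant of $Q\partial_{1},Q\partial_{2}$ is $Q^{2}$, not $Q$, and writing $Q\partial_{i}=\sum_{j}f_{ij}\theta_{j}$ only gives $\det(\theta_{i}(x_{j}))\mid Q^{2}$ together with $Q\mid\det(\theta_{i}(x_{j}))$, which leaves $\det(\theta_{i}(x_{j}))=Qg$ with $g\mid Q$ and does not force $g$ to be a unit. The standard way to close this is to localize at the prime $(\alpha_{H})$: there $D(\mathcal{A},m)$ becomes the logarithmic module of the single hyperplane $H$ with multiplicity $m(H)$, whose basis has determinant a unit times $\alpha_{H}^{m(H)}$, so the order of vanishing of $\det(\theta_{i}(x_{j}))$ along $H$ is exactly $m(H)$; localizing away from all the $\alpha_{H}$ rules out extraneous factors. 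Since the proposition is itself a citation, deferring this point to Ziegler is defensible, but as a self-contained argument that step needs the localization (or the Hilbert-series) computation.
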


\begin{proposition}[{\cite[Theorem 4.37]{orlik1992arrangements}}, {\cite[Proposition 1.7]{abe2009signed-eliminable-jotlms}}]\label{free localization}
Suppose that $(\mathcal{A},m)$ is free. 
Then the localization $(\mathcal{A}_{X}, m|_{\mathcal{A}_{X}})$ is free for any $X \in L(\mathcal{A})$. 
\end{proposition}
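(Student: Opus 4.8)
The statement to be proved is Proposition \ref{free localization}: if a multiarrangement $(\mathcal{A}, m)$ is free, then its localization $(\mathcal{A}_X, m|_{\mathcal{A}_X})$ is free for every $X \in L(\mathcal{A})$. My plan is to argue by passing to an appropriate linear change of coordinates adapted to $X$ and then exhibiting a free basis of $D(\mathcal{A}_X, m|_{\mathcal{A}_X})$ built from a free basis of $D(\mathcal{A}, m)$. First I would reduce to the case where $X$ is spanned by coordinate vectors: choose linear coordinates $y_1, \dots, y_\ell$ so that $X = \{y_1 = \dots = y_k = 0\}$, where $k = \ell - \dim X$; since the hyperplanes in $\mathcal{A}_X$ are exactly those containing $X$, each such hyperplane has a defining form lying in the subspace spanned by $y_1, \dots, y_k$, and in particular $D(\mathcal{A}_X, m|_{\mathcal{A}_X})$ decomposes compatibly with the splitting of variables into $\{y_1,\dots,y_k\}$ and $\{y_{k+1},\dots,y_\ell\}$.

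The key step is the following observation about localization of derivation modules. Let $S = \mathbb{K}[y_1,\dots,y_\ell]$ and let $\mathfrak{p}$ be the prime ideal $(y_1, \dots, y_k)$ corresponding to $X$. One checks that $D(\mathcal{A}, m) \otimes_S S_{\mathfrak{p}}$ is naturally identified with $D(\mathcal{A}_X, m|_{\mathcal{A}_X}) \otimes_S S_{\mathfrak{p}}$, because for a hyperplane $H \not\in \mathcal{A}_X$ the form $\alpha_H$ becomes a unit in $S_{\mathfrak{p}}$ (it does not vanish on a Zariski-open neighborhood of the generic point of $X$), so the defining condition $\theta(\alpha_H) \in \alpha_H^{m(H)} S_{\mathfrak{p}}$ is vacuous there. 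Hence if $(\mathcal{A}, m)$ is free, then $D(\mathcal{A}_X, m|_{\mathcal{A}_X}) \otimes_S S_{\mathfrak{p}}$ is a free $S_{\mathfrak{p}}$-module. Then I would use that $D(\mathcal{A}_X, m|_{\mathcal{A}_X})$ is itself a graded $S$-module which only involves the conditions coming from forms in $\mathbb{K}[y_1,\dots,y_k]$, so it is ``constant in the $y_{k+1},\dots,y_\ell$ directions'': concretely $D(\mathcal{A}_X, m|_{\mathcal{A}_X}) \cong D(\mathcal{A}_X', m') \otimes_{\mathbb{K}[y_1,\dots,y_k]} S$ where $(\mathcal{A}_X', m')$ is the essentialization living in $\mathbb{K}[y_1,\dots,y_k]$, together with the free rank-$(\ell-k)$ summand generated by $\partial_{y_{k+1}}, \dots, \partial_{y_\ell}$. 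Freeness of the localization at $\mathfrak{p}$ then forces freeness of $D(\mathcal{A}_X', m')$ over $\mathbb{K}[y_1,\dots,y_k]$ by faithful flatness / graded Nakayama arguments (a finitely generated graded module over a polynomial ring that becomes free after localizing at the irrelevant-type prime $\mathfrak{p}$ lifted appropriately is free), and tensoring back up shows $D(\mathcal{A}_X, m|_{\mathcal{A}_X})$ is a free $S$-module.

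The main obstacle I anticipate is the passage from ``free after localization at $\mathfrak{p}$'' to ``free as a graded $S$-module'': a module can be locally free at one prime without being globally free, so I must genuinely exploit the product structure $S = \mathbb{K}[y_1,\dots,y_k] \otimes_{\mathbb{K}} \mathbb{K}[y_{k+1},\dots,y_\ell]$ and the fact that $D(\mathcal{A}_X, m|_{\mathcal{A}_X})$ is extended from the smaller polynomial ring. The cleanest route is probably to recognize that localizing $D(\mathcal{A}_X', m')$ at the maximal graded ideal of $\mathbb{K}[y_1,\dots,y_k]$ is governed by the same data as localizing $D(\mathcal{A}, m)$ at $\mathfrak{p}$, and then invoke that a graded module over a polynomial ring whose localization at the irrelevant ideal is free must be free (graded projective modules over polynomial rings are free, or Auslander–Buchsbaum together with the Saito-type criterion). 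Once that algebraic point is in place, the rest is bookkeeping with the coordinate splitting, and the exponents of $(\mathcal{A}_X, m|_{\mathcal{A}_X})$ come out as the exponents of the essentialization padded with $\dim X$ copies of $0$.
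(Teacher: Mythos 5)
Your argument is correct and is essentially the standard proof of this fact: the paper itself gives no proof, citing \cite[Theorem 4.37]{orlik1992arrangements} and \cite[Proposition 1.7]{abe2009signed-eliminable-jotlms}, and those sources argue exactly as you do, via the identification $D(\mathcal{A},m)_{\mathfrak{p}} \cong D(\mathcal{A}_X,m|_{\mathcal{A}_X})_{\mathfrak{p}}$ at the prime $\mathfrak{p}=I(X)$ together with the observation that $D(\mathcal{A}_X,m|_{\mathcal{A}_X})$ is extended from $\mathbb{K}[y_1,\dots,y_k]$, so that graded Nakayama (minimal free resolution) plus faithfully flat descent upgrades local freeness to freeness. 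You have also correctly flagged the only genuinely delicate point, namely that local freeness at a single prime does not suffice without the product structure.
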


\begin{proposition}[{\cite[Corollary 4.47]{orlik1992arrangements}}]\label{restriction theorem}
Let $H \in \mathcal{A}$. 
If $\mathcal{A}$ and $\mathcal{A}\setminus\{H\}$ is free, then the restriction $\mathcal{A}^{H}$ is free. 
\end{proposition}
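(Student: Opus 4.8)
\textbf{Proof proposal for Proposition~\ref{restriction theorem}.}
The statement to prove is Terao's Addition–Deletion theorem in the form ``if $\mathcal{A}$ and $\mathcal{A}\setminus\{H\}$ are free, then $\mathcal{A}^{H}$ is free'' together with the expected exponent relation $\expon(\mathcal{A}^{H})=\expon(\mathcal{A})\setminus\{d_{\ell}\}$ for a suitable $d_{\ell}$. The plan is to pass through the Euler-type short exact sequence relating the three derivation modules. First I would set $\mathcal{A}'=\mathcal{A}\setminus\{H\}$ and $\mathcal{A}''=\mathcal{A}^{H}$, write $\alpha=\alpha_{H}$, and consider the restriction map $\rho\colon D(\mathcal{A})\to D(\mathcal{A}'')$ obtained by restricting a logarithmic derivation $\theta$ to the hyperplane $H$ (one checks that $\rho(\theta)$ is indeed a logarithmic derivation of $\mathcal{A}^{H}$ on $H$, using that $\theta(\alpha_{K})\in\alpha_{K}S$ descends modulo $\alpha$). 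The kernel of $\rho$ is $\alpha\,D(\mathcal{A}')$: a derivation vanishing on $H$ is divisible by $\alpha$, and dividing lands one in $D(\mathcal{A}')$ precisely because the defining form $\alpha$ of $H$ has been removed.

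The core of the argument is then to show that $\rho$ is surjective when both $\mathcal{A}$ and $\mathcal{A}'$ are free. With surjectivity in hand we get the short exact sequence of graded $S$-modules
\begin{align*}
0 \longrightarrow \alpha\,D(\mathcal{A}') \longrightarrow D(\mathcal{A}) \stackrel{\rho}{\longrightarrow} D(\mathcal{A}^{H}) \longrightarrow 0,
\end{align*}
and since $D(\mathcal{A})$ and $D(\mathcal{A}')$ (hence $\alpha\,D(\mathcal{A}')\cong D(\mathcal{A}')(-1)$) are free, a dimension/Hilbert-series count forces the exponents to match up in the expected way; freeness of $D(\mathcal{A}^{H})$ over the polynomial ring $S/\alpha S$ follows because a surjection from a free module onto a module whose Hilbert series equals that of the obvious free candidate must split. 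Concretely, comparing Poincaré series gives $\expon(\mathcal{A})=\expon(\mathcal{A}')\cup\{?\}$-type constraints already from freeness of $\mathcal{A},\mathcal{A}'$, and the quotient's Poincaré series is then pinned down, so I would invoke the standard fact that a graded module over a polynomial ring with the Poincaré series of a free module and admitting a surjection from a free module of the same series is itself free.

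The main obstacle is the surjectivity of $\rho$. The honest route is to avoid re-deriving it and instead cite the Addition–Deletion machinery: by Terao's theorem, freeness of any two of $\mathcal{A},\mathcal{A}',\mathcal{A}^{H}$ with compatible exponents implies freeness of the third, and the subtle point is that freeness of $\mathcal{A}$ and $\mathcal{A}'$ \emph{alone} already forces the exponent compatibility (this is the content of \cite[Theorem 4.46, Corollary 4.47]{orlik1992arrangements}). So in the write-up I would simply reduce to that corollary: record the exponent relation coming from $D(\mathcal{A})\supseteq\alpha D(\mathcal{A}')$ as graded modules, observe that the two sets of exponents $\expon(\mathcal{A})$ and $\expon(\mathcal{A}')$ differ in exactly one slot, and apply Addition–Deletion to conclude $\mathcal{A}^{H}$ is free with exponents $\expon(\mathcal{A})$ minus that slot. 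If instead a self-contained proof is wanted, the work is entirely in establishing surjectivity of $\rho$, which is where Saito's criterion and a careful look at the determinant of a coefficient matrix modulo $\alpha$ would enter — but since Propositions~\ref{deletion-restriction formula}–\ref{restriction theorem} are quoted from \cite{orlik1992arrangements}, citing the reference is the intended and cleanest path.
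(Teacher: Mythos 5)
The paper gives no proof of Proposition~\ref{restriction theorem} --- it is quoted verbatim from \cite[Corollary 4.47]{orlik1992arrangements} --- and your proposal correctly identifies citation as the intended route while also giving an accurate sketch of the standard argument (the exact sequence $0 \to \alpha_H D(\mathcal{A}')\to D(\mathcal{A})\to D(\mathcal{A}^{H})$, with the real content being that freeness of $\mathcal{A}$ and $\mathcal{A}\setminus\{H\}$ alone forces the exponent compatibility needed for Addition--Deletion). So your approach matches the paper's, and the sketch is consistent with the proof in the cited reference.
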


We can define the characteristic polynomial $\chi((\mathcal{A}, m), t)$ for multiarrangements $(\mathcal{A}, m)$ which generalizes the characteristic polynomial of a simple arrangement, using the Hilbert series of the derivation modules. 

The following theorem shows close relation between freeness and combinatorics of arrangements. 
\begin{theorem}[Factorization Theorem, {\cite{terao1981generalized-im}}, {\cite[Theorem 4.1]{abe2007characteristic-aim}}]\label{factorization theorem}
Suppose that a multiarrangement $(\mathcal{A}, m)$ is free with exponents $(d_{1}, \dots, d_{\ell})$. 
Then 
\begin{align*}
\chi((\mathcal{A},m), t) = (t-d_{1}) \cdots (t-d_{\ell}). 
\end{align*}
\end{theorem}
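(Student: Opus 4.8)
I would work from the Solomon--Terao description of the characteristic polynomial of a multiarrangement as a transform of the Hilbert series of the modules $D^{(p)}(\mathcal{A},m)$ of logarithmic $p$-derivations, for $p = 0, 1, \dots, \ell$, with $D^{(0)}(\mathcal{A},m) = S$ and $D^{(1)}(\mathcal{A},m) = D(\mathcal{A},m)$. In the normalization I adopt, the defining formula reads
\begin{equation*}
\chi((\mathcal{A}, m), t) = (-1)^{\ell} \lim_{x \to 1} \sum_{p=0}^{\ell} \mathrm{Hilb}\bigl(D^{(p)}(\mathcal{A}, m); x\bigr)\,\bigl(t(x-1) - 1\bigr)^{p}.
\end{equation*}
Thus the whole problem splits into two tasks: determining each Hilbert series $\mathrm{Hilb}(D^{(p)}(\mathcal{A}, m); x)$ under the freeness hypothesis, and then carrying out the resulting limit. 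First I would fix a homogeneous basis $\theta_{1}, \dots, \theta_{\ell}$ of $D(\mathcal{A}, m)$ with $\deg \theta_{i} = d_{i}$, which exists by Definition~\ref{definition freeness}, so that $\mathrm{Hilb}(D(\mathcal{A}, m); x) = (x^{d_{1}} + \dots + x^{d_{\ell}})/(1-x)^{\ell}$.

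The key step is to propagate the freeness of $D^{(1)}(\mathcal{A}, m)$ to every $D^{(p)}(\mathcal{A}, m)$ and to pin down its graded structure. The claim is that the wedge products $\theta_{i_{1}} \wedge \dots \wedge \theta_{i_{p}}$ with $1 \le i_{1} < \dots < i_{p} \le \ell$ form a free basis, so that $D^{(p)}(\mathcal{A}, m) = \bigwedge^{p} D(\mathcal{A}, m)$ is free with
\begin{equation*}
\mathrm{Hilb}\bigl(D^{(p)}(\mathcal{A}, m); x\bigr) = \frac{e_{p}(x^{d_{1}}, \dots, x^{d_{\ell}})}{(1-x)^{\ell}},
\end{equation*}
where $e_{p}$ denotes the $p$-th elementary symmetric polynomial. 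The inclusion $\bigwedge^{p} D(\mathcal{A}, m) \subseteq D^{(p)}(\mathcal{A}, m)$ is formal; the reverse inclusion is the substantial point and is where freeness is genuinely used. I would obtain it by passing to the dual modules of logarithmic $p$-forms and invoking Saito's theory of free divisors in its multiarrangement refinement: once the rank-one logarithmic module is free, reflexivity forces its exterior powers to exhaust the $p$-forms, and dualizing returns the statement for derivations. This is the step I expect to be the main obstacle, since it is exactly where the algebraic content of freeness is converted into the exterior-power description, and it requires a local analysis of the logarithmic modules at each flat rather than a bare dimension count.

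Granting the key step, the conclusion is a direct computation. Substituting the Hilbert series into the defining formula and using the generating identity $\sum_{p=0}^{\ell} e_{p}(x^{d_{1}}, \dots, x^{d_{\ell}})\,s^{p} = \prod_{i=1}^{\ell}(1 + x^{d_{i}} s)$ with $s = t(x-1) - 1$ gives
\begin{equation*}
\sum_{p=0}^{\ell} \mathrm{Hilb}\bigl(D^{(p)}(\mathcal{A}, m); x\bigr)\,\bigl(t(x-1)-1\bigr)^{p} = \frac{1}{(1-x)^{\ell}} \prod_{i=1}^{\ell}\Bigl(1 + x^{d_{i}}\bigl(t(x-1) - 1\bigr)\Bigr).
\end{equation*}
Each factor on the right vanishes at $x = 1$, so I would expand to first order in $\varepsilon = 1 - x$, using $x^{d_{i}} = 1 - d_{i}\varepsilon + O(\varepsilon^{2})$, to find that the $i$-th factor equals $(d_{i} - t)\varepsilon + O(\varepsilon^{2})$. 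The product is therefore $\varepsilon^{\ell} \prod_{i}(d_{i} - t) + O(\varepsilon^{\ell+1})$, the factor $(1-x)^{-\ell} = \varepsilon^{-\ell}$ cancels the leading $\varepsilon^{\ell}$, and the limit equals $\prod_{i=1}^{\ell}(d_{i} - t)$. Multiplying by $(-1)^{\ell}$ yields $\chi((\mathcal{A}, m), t) = \prod_{i=1}^{\ell}(t - d_{i})$, as desired. The only care needed here is the order-of-vanishing bookkeeping in $\varepsilon$, which is routine once the key step is in place.
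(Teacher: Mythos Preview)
The paper does not supply its own proof of this theorem; it is quoted as a known result with citations to Terao (for simple arrangements) and to Abe--Terao--Wakefield (for multiarrangements), and is used as a black box elsewhere in the paper. So there is nothing in the paper to compare your argument against line by line.

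That said, your outline is precisely the strategy of the cited Abe--Terao--Wakefield paper: there the characteristic polynomial of $(\mathcal{A},m)$ is \emph{defined} via the Solomon--Terao limit formula you wrote down, the central proposition is that freeness of $D(\mathcal{A},m)$ forces $D^{(p)}(\mathcal{A},m)=\bigwedge_{S}^{p}D(\mathcal{A},m)$ with the wedge products of a homogeneous basis as a free basis, and the limit computation with elementary symmetric polynomials then proceeds exactly as you indicate. Your final $\varepsilon$-expansion is correct.

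The one soft spot is your justification of the key step. The phrase ``passing to the dual modules of logarithmic $p$-forms and invoking Saito's theory of free divisors in its multiarrangement refinement'' is more a gesture than an argument; reflexivity and duality alone do not obviously give you what you need. In the cited reference the argument is direct and elementary: given $\varphi\in D^{(p)}(\mathcal{A},m)\subseteq\bigwedge^{p}\Der(S)$, expand it in the basis $\theta_{i_{1}}\wedge\cdots\wedge\theta_{i_{p}}$ with a priori coefficients in the fraction field, and show each coefficient lies in $S$ by contracting against suitable $(p-1)$-fold wedges of the $\theta_{j}$ and using freeness of $D(\mathcal{A},m)$. If you want a self-contained proof, that is the step you should write out; the rest of your proposal is sound.
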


Now, we give the definitions of inductive and divisional freeness for simple arrangements. 
They are actually free by the Addition-Deletion Theorem \cite[Theorem 4.51]{orlik1992arrangements} and the Division Theorem \cite[Theorem 1.1]{abe2016divisionally-im}. 
\begin{definition}
The class of \textbf{inductively free arrangements} is defined to be the smallest class of arrangements such that the following conditions hold. 
\begin{enumerate}[(1)]
\item The empty arrangements are inductively free. 
\item If there exists $ H \in \A $ such that both $\A\setminus\{H\}$ and $\A^{H}$ are inductively free and $\expon(\A^{H})\subseteq\expon(\A\setminus\{H\})$, then $\A$ is inductively free.
\end{enumerate}
\end{definition}

\begin{definition}
The class of \textbf{divisionally free arrangements} is defined to be the smallest class of arrangements such that the following conditions hold
\begin{enumerate}[(1)]
\item The empty arrangements are divisionally free. 
\item If there exists $ H \in \A $ such that $ \A^{H} $ is divisionally free and $ \chi(\A^{H},t) $ divides $ \chi(\A,t) $, then $ \A $ is divisionally free. 
\end{enumerate}
\end{definition}

\subsection{Deletion and contraction of gain graphs}

Let $\Gamma = (V_{\Gamma}, E_{\Gamma}, G_{\Gamma})$ be a simple gain graph and $e = [i,j, g]$ an edge of $\Gamma$. 
We define the deletion and contraction of $\Gamma$ with respect to $e$. 

Define the \textbf{deletion} $\Gamma \setminus e$ by deleting the edge $e$ from $\Gamma$. 
Namely, $\Gamma \setminus e$ is defined by the following data
\begin{itemize}
\item $V_{\Gamma \setminus e} \coloneqq V_{\Gamma}$. 
\item $E_{\Gamma \setminus e} \coloneqq E_{\Gamma} \setminus \{e\}$. 
\item $G_{\Gamma \setminus e} \coloneqq G_{\Gamma}$. 
\end{itemize}

The edge $e$ has two direction $(i,j,g)$ and $(j,i,-g)$. 
In order to define the contraction, we need to fix a direction of $e$. 
Define the \textbf{contraction} $\Gamma/(i,j,g)$ by identifying the vertex $i$ with the vertex $j$ and by adding the corresponding gains (see Figure \ref{Fig:contraction}). 
Namely, the contraction $\Gamma/(i,j,g)$ consists of the following data
\begin{itemize}
\item $V_{\Gamma/(i,j,g)} \coloneqq V_{\Gamma}\setminus\{i\}$. 
\item $E_{\Gamma/(i,j,g)} \coloneqq \Set{[u,v,h] | u,v \in V_{\Gamma}\setminus \{i\}, \ [u,v,h] \in E_{\Gamma}} \\ 
\hspace*{60mm} \cup \Set{[k,j,h+g] | k \in V_{\Gamma}\setminus\{i\}, \ [k,i,h] \in E_{\Gamma}}$. 
\item $G_{\Gamma/(i,j,g)} \coloneqq G_{\Gamma}$. 
\end{itemize}
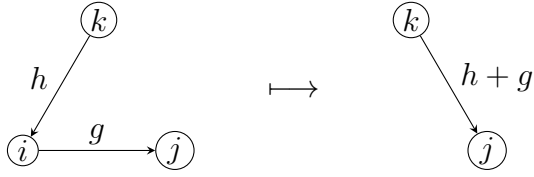
\begin{figure}[t]
\centering
\begin{tikzpicture}[baseline=20]
\draw (0,1.73) node[circle, draw, inner sep=1pt](k){$k$};
\draw (-1,0) node[circle, draw, inner sep=1pt](i){$i$};
\draw ( 1,0) node[circle, draw, inner sep=1pt](j){$j$};
\draw[-stealth] (k)--node[xshift=-8, yshift=3]{$h$}(i);
\draw[-stealth] (i)--node[xshift=0, yshift=6]{$g$}(j);
\end{tikzpicture}
\qquad $\longmapsto$ \qquad
\begin{tikzpicture}[baseline=20]
\draw (0,1.73) node[circle, draw, inner sep=1pt](k){$k$};
\draw ( 1,0) node[circle, draw, inner sep=1pt](j){$j$};
\draw[-stealth] (k)--node[xshift=18, yshift=3]{$h+g$}(j);
\end{tikzpicture}
\caption{Contraction of $(i,j,g)$.}\label{Fig:contraction}
\end{figure}

The deletion and contraction of gain graphs are compatible with the deletion and restriction of the affinographic arrangement $\mathcal{A}(\Gamma)$ and the bias arrangement $\mathcal{B}(\Gamma)$. 
Namely, the following two lemmas hold. 

\begin{lemma}\label{deletion of gain graphs}
Let $e = [i,j,g]$ be an edge of $\Gamma$. Then
\begin{enumerate}[(1)]
\item $\A(\Gamma\setminus e) = \A(\Gamma)\setminus\{x_i-x_j=g\}$, 
\item $\B(\Gamma\setminus e) = \B(\Gamma)\setminus\{x_i-q^{g}x_{j}=0\}$. 
\end{enumerate}
\end{lemma}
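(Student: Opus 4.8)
The statement to prove is Lemma \ref{deletion of gain graphs}, which asserts that deletion of an edge in a gain graph corresponds exactly to deletion of the associated hyperplane in both $\mathcal{A}(\Gamma)$ and $\mathcal{B}(\Gamma)$.

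The plan is to unwind the definitions on both sides and observe that the claimed equality is essentially immediate. First I would fix the edge $e = [i,j,g]$ and recall that, by definition, $E_{\Gamma\setminus e} = E_{\Gamma}\setminus\{e\}$ while $V_{\Gamma\setminus e} = V_{\Gamma}$ and $G_{\Gamma\setminus e}=G_{\Gamma}$. For part (1), I would write out
\begin{align*}
\mathcal{A}(\Gamma\setminus e) = \Set{\{x_u-x_v = h\} | [u,v,h] \in E_{\Gamma\setminus e}} = \Set{\{x_u-x_v = h\} | [u,v,h] \in E_{\Gamma}\setminus\{e\}}
\end{align*}
and compare this with $\mathcal{A}(\Gamma)\setminus\{x_i-x_j=g\}$. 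The map $[u,v,h]\mapsto\{x_u-x_v=h\}$ from $E_{\Gamma}$ to $\mathcal{A}(\Gamma)$ is surjective by construction and sends $e=[i,j,g]$ to the hyperplane $\{x_i-x_j=g\}$; removing $e$ from the source removes exactly that hyperplane from the target. Part (2) follows the same pattern with the map $[u,v,h]\mapsto\{x_u-q^h x_v=0\}$ sending $e$ to $\{x_i-q^g x_j=0\}$.

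The one point that deserves a sentence of care is that these maps need not be injective: distinct edges of a general (non-simple in the arrangement-theoretic sense) graph could conceivably yield the same hyperplane — however, since $\Gamma$ is a \emph{simple} gain graph, distinct edge classes $[u,v,h]$ are distinct as unordered triples, and one checks that $\{x_u-x_v=h\}=\{x_{u'}-x_{v'}=h'\}$ (respectively $\{x_u-q^h x_v=0\}=\{x_{u'}-q^{h'}x_{v'}=0\}$) forces $[u,v,h]=[u',v',h']$ as equivalence classes, using that $q$ is not a root of unity (or is a primitive $p$-th root when $G_{\Gamma}=\mathbb{F}_p$) so that $q^h=q^{h'}$ with $h,h'$ ranging over the gain group forces the exponents to agree. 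This injectivity guarantees that deleting the single edge $e$ deletes precisely one hyperplane and no other, so the two set-theoretic descriptions coincide.

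There is no real obstacle here: the lemma is a bookkeeping statement whose content is that the deletion operations on the two sides of the dictionary are defined so as to match. The only mildly nontrivial ingredient is the well-definedness and injectivity observation in the previous paragraph, which the excerpt has in effect already recorded in the remark following the definition of $\mathcal{B}(\Gamma)$; I would simply cite that discussion rather than re-deriving it. Accordingly the proof will be short — a display of definitions for each of the two parts, plus a pointer to the well-definedness remark.
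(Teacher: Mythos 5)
Your proof is correct; the paper in fact states this lemma without proof, treating it as immediate from the definitions, and your unwinding of the definitions together with the injectivity remark (which is the only point requiring any care, and which you correctly tie to $q$ not being a root of unity, resp.\ being a primitive $p$-th root) is exactly the argument the authors implicitly have in mind.
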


\begin{lemma}\label{contraction of gain graphs}
Let $e=[i,j,g]$ be an edge of $\Gamma$ and fix a direction $(i,j,g)$. Then
\begin{enumerate}[(1)]
\item $\A(\Gamma/ (i,j,g)) $ is affinely equivalent to $ \A(\Gamma)^{\{x_i-x_j=g\}}$, 
\item $\B(\Gamma/ (i,j,g)) $ is affinely equivalent to $ \B(\Gamma)^{\{x_i-q^{g}x_{j}=0\}}$, 
\end{enumerate}
where two arrangements $\mathcal{A}_{1}$ and $\mathcal{A}_{2}$ are affinely equivalent if there exists an affine isomorphism $\phi$ between the ambient spaces such that $\mathcal{A}_{2} = \Set{\phi(H) | H \in \mathcal{A}_{1}}$. 
\end{lemma}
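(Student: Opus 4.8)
The plan is to write down, in each case, an explicit affine (respectively linear) isomorphism from the hyperplane being restricted onto the ambient space of the contracted gain graph, and then to check that it carries one arrangement onto the other hyperplane by hyperplane; the two parts run in parallel, one additively and one multiplicatively.

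For part (1) put $H_{0}=\{x_{i}-x_{j}=g\}$, so that $\A(\Gamma)^{H_{0}}$ lives in $H_{0}$ while $\A(\Gamma/(i,j,g))$ lives in $\mathbb{K}^{V_{\Gamma}\setminus\{i\}}$, where $\mathbb{K}=\mathbb{Q}$ or $\mathbb{F}_{p}$. Define $\phi\colon\mathbb{K}^{V_{\Gamma}\setminus\{i\}}\to H_{0}$ by sending $(x_{k})_{k\neq i}$ to the point of $H_{0}$ with those coordinates and with $x_{i}\coloneqq x_{j}+g$; this is an affine isomorphism whose inverse is the projection forgetting the $i$-th coordinate. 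It then suffices to see that $\phi$ maps the hyperplanes of $\A(\Gamma/(i,j,g))$ bijectively onto those of $\A(\Gamma)^{H_{0}}$. Every edge $e'\neq e$ of $\Gamma$ having $i$ as an endpoint can be written $[k,i,h]$ with $k\neq i$, and every other edge as $[u,v,h]$ with $u,v\neq i$. For an edge of the second kind, substituting $x_{i}=x_{j}+g$ leaves its hyperplane $\{x_{u}-x_{v}=h\}$ unchanged, so $\phi^{-1}$ carries $\{x_{u}-x_{v}=h\}\cap H_{0}$ to the hyperplane of $[u,v,h]\in E_{\Gamma/(i,j,g)}$. For an edge $[k,i,h]$ the hyperplane $K=\{x_{k}-x_{i}=h\}$ gives $\phi^{-1}(K\cap H_{0})=\{x_{k}-x_{j}=h+g\}$, the hyperplane of $[k,j,h+g]\in E_{\Gamma/(i,j,g)}$ --- unless $k=j$, in which case $K$ is parallel to $H_{0}$ and, being distinct from it because $e'\neq e$, has $K\cap H_{0}=\varnothing$; this is consistent with $[j,j,h+g]$ being a loop and hence not an edge of the simple gain graph $\Gamma/(i,j,g)$. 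Reading this correspondence backwards, and using that every edge of $\Gamma/(i,j,g)$ has one of the two displayed shapes, produces the bijection; distinct edges of $\Gamma$ that restrict to one and the same hyperplane of $H_{0}$ are exactly those that become the same edge in the set $E_{\Gamma/(i,j,g)}$, so no inconsistency appears.

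For part (2) only sums of gains get replaced by products of powers of $q$. With $H_{0}=\{x_{i}-q^{g}x_{j}=0\}$, define $\phi\colon\mathbb{C}^{V_{\Gamma}\setminus\{i\}}\to H_{0}$ by sending $(x_{k})_{k\neq i}$ to the point with those coordinates and $x_{i}\coloneqq q^{g}x_{j}$ (legitimate since $q\neq 0$); this is a linear isomorphism with inverse forgetting $x_{i}$. The coordinate hyperplane $\{x_{k}=0\}$ with $k\neq i$ restricts to $\{x_{k}=0\}$, and $\{x_{i}=0\}$ restricts to $\{x_{j}=0\}$ because $q^{g}\neq 0$, so $\phi$ matches the coordinate hyperplanes of $\B(\Gamma/(i,j,g))$; an edge $[u,v,h]\neq e$ with $u,v\neq i$ keeps its hyperplane $\{x_{u}-q^{h}x_{v}=0\}$; and an edge $[k,i,h]\neq e$ with $k\neq i$ yields $\phi^{-1}(K\cap H_{0})=\{x_{k}-q^{h+g}x_{j}=0\}$, the hyperplane of $[k,j,h+g]\in E_{\Gamma/(i,j,g)}$. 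The delicate case is once more $k=j$: there $[j,i,h]$ restricts to $\{(1-q^{h+g})x_{j}=0\}$, and invoking the hypothesis on $q$ --- not a root of unity when $G_{\Gamma}=\mathbb{Z}$, a primitive $p$-th root of unity when $G_{\Gamma}=\mathbb{F}_{p}$ --- together with $h+g\neq 0$ (forced by $[j,i,h]\neq e$) gives $q^{h+g}\neq 1$, so this restriction is the coordinate hyperplane $\{x_{j}=0\}$ already present, matching the fact that the loop $[j,j,h+g]$ contributes nothing new. Assembling these identifications gives $\{\phi(H)\mid H\in\B(\Gamma/(i,j,g))\}=\B(\Gamma)^{H_{0}}$.

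The underlying mathematics is routine linear algebra, so the real work --- and essentially the only place I expect friction --- is the bookkeeping of the degenerate configurations. The point that needs the most care is the behaviour of the loops created by contraction: in the affinographic case they correspond to empty intersections and simply drop out, whereas in the bias case an unbalanced loop at $j$ must collapse exactly onto the coordinate hyperplane $\{x_{j}=0\}$, and it is precisely the assumption on $q$ (guaranteeing $1-q^{h+g}\neq 0$ when $h+g\neq 0$) that forces this collapse rather than an empty or degenerate set. One must also observe that distinct edges of $\Gamma$ can restrict to a common hyperplane of $H_{0}$; this is harmless since $E_{\Gamma/(i,j,g)}$ is a set in which exactly those edges are already identified.
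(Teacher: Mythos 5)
Your proof is correct: the paper states Lemma \ref{contraction of gain graphs} without proof, and your explicit affine/linear isomorphism $\phi$ (setting $x_i = x_j + g$, resp.\ $x_i = q^g x_j$) together with the hyperplane-by-hyperplane check is exactly the routine verification the authors omit. You also correctly identify and resolve the only genuinely delicate points --- the parallel hyperplanes from edges $[j,i,h]\neq e$ dropping out of $\A(\Gamma)^{H_0}$ as empty intersections, the corresponding collapse onto $\{x_j=0\}$ in $\B(\Gamma)^{H_0}$ forced by the hypothesis that $q^{h+g}\neq 1$, and the identification of edges that become equal in $E_{\Gamma/(i,j,g)}$ --- so nothing is missing.
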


\begin{remark}
Since $\{x_{i}-x_{j} = g\} = \{x_{j}-x_{i} = -g\}$, $\mathcal{A}(\Gamma/(i,j,g))$ is affinely equivalent to $\mathcal{A}(\Gamma/(j,i,-g))$ although $\Gamma/(i,j,g)$ is distinct from $\Gamma/(j,i,-g)$ in general. 
By the same reason $\mathcal{B}(\Gamma/(i,j,g))$ is affinely equivalent to  $\mathcal{B}(\Gamma/(j,i,-g))$. 
Thus we will use the notation $\mathcal{A}(\Gamma/e)$ and $\mathcal{B}(\Gamma/e)$ to denote the arrangements corresponding to the contraction. 
\end{remark}

Thanks to Lemma \ref{deletion of gain graphs} and \ref{contraction of gain graphs}, we define the inductive and divisional freeness along edges for $\mathbf{c}\mathcal{A}(\Gamma)$ and $\mathcal{B}(\Gamma)$ as follows.  (We mention the definitions only for $\mathbf{c}\mathcal{A}(\Gamma)$. The definitions for $\mathcal{B}(\Gamma)$ are similar.)

\begin{definition}\label{definition IF along edges}
We say that $\mathbf{c}\mathcal{A}(\Gamma)$ is \textbf{inductively free along edges} if it satisfies the following recursive conditions. 
\begin{enumerate}[(1)]
\item If $E_{\Gamma} = \varnothing$, then $\mathbf{c}\mathcal{A}(\Gamma)$  is inductively free along edges. 
\item If there exists an edge $e \in E_{\Gamma}$ such that $\mathbf{c}\mathcal{A}(\Gamma \setminus e)$ and $\mathbf{c}\mathcal{A}(\Gamma/e)$ are inductively free along edges and $\exp(\mathbf{c}\mathcal{A}(\Gamma/e)) \subseteq \exp(\mathbf{c}\mathcal{A}(\Gamma\setminus e))$, then $\mathbf{c}\mathcal{A}(\Gamma)$ is inductively free along edges. 
\end{enumerate}
\end{definition}

\begin{definition}\label{definition DF along edges}
We say that $\mathbf{c}\mathcal{A}(\Gamma)$ is \textbf{divisionally free along edges} if it satisfies the following recursive conditions. 
\begin{enumerate}[(1)]
\item If $E_{\Gamma} = \varnothing$, then $\mathbf{c}\mathcal{A}(\Gamma)$  is divisionally free along edges. 
\item If there exists an edge $e \in E_{\Gamma}$ such that $\mathbf{c}\mathcal{A}(\Gamma/e)$ are divisionally free along edges and $\chi(\mathbf{c}\mathcal{A}(\Gamma/e),t)$ divides $\chi(\mathbf{c}\mathcal{A}(\Gamma),t)$, then $\mathbf{c}\mathcal{A}(\Gamma)$ is divisionally free along edges. 
\end{enumerate}
\end{definition}

\begin{remark}
If $E_{\Gamma} = \varnothing$, then $\mathbf{c}\mathcal{A}(\Gamma)$ consists of just one hyperplane (the hyperplane at infinity) and $\mathcal{B}(\Gamma)$ is the Boolean arrangement. 
Since they are inductively and divisionally free, inductive and divisional freeness along edges imply inductive and divisional freeness. 
\end{remark}

To study freeness of an arrangement, the characteristic polynomial plays an important role. 
The lemma below states a relation between the characteristic polynomials of $\mathbf{c}\mathcal{A}(\Gamma)$ and $\mathcal{B}(\Gamma)$, which is also proven in \cite[Theorem 2.10 and Remark 2.12]{deshpande2023refinements-joac} when $G_{\Gamma} = \mathbb{Z}$ by using the finite field method. 

\begin{lemma}\label{lem:equalbetweencharpoly}
Let $\Gamma$ be a simple gain graph. 
Then
\begin{align*}
\chi(\A(\Gamma),t)=\chi(\B(\Gamma),t+1)
\end{align*}
and 
\begin{align*}
\chi(\mathbf{c}\A(\Gamma),t)=(t-1)\chi(\B(\Gamma),t+1). 
\end{align*}
\end{lemma}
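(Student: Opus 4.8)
The plan is to compute both characteristic polynomials via a common combinatorial model and observe they agree after the substitution $t \mapsto t+1$. The cleanest route is to identify the intersection posets $L(\mathcal{A}(\Gamma))$ and $L(\mathcal{B}(\Gamma))$ with (essentially) the same poset. Recall that a nonempty intersection of hyperplanes $\{x_i - x_j = g\}$ in $\mathcal{A}(\Gamma)$ records a partition of a subset of $[\ell]$ into blocks together with, on each block, a consistent assignment of differences $x_i - x_j$; the intersection is nonempty precisely when the chosen edges within each block have no ``unbalanced cycle'' (the gains around any cycle sum to zero in $G_\Gamma$). Likewise, a nonempty intersection of hyperplanes $\{x_i=0\}$ and $\{x_i - q^g x_j = 0\}$ in $\mathcal{B}(\Gamma)$ records: a subset of coordinates forced to be $0$, together with a partition of the remaining coordinates into blocks on which the ratios $x_i/x_j = q^{g}$ are pinned down — again requiring balance of gains around cycles because $q$ is not a root of unity (resp.\ has order exactly $p$) so $q^g = 1$ iff $g = 0$ in $\mathbb{Z}$ (resp.\ in $\mathbb{F}_p$).

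First I would make this precise: show $L(\mathcal{A}(\Gamma))$ is isomorphic to the poset of ``balanced partial partitions'' $P$ of $[\ell]$ arising from subsets of $E_\Gamma$, with the map sending a flat $X$ of dimension $d$ to the partition whose number of blocks (counting singletons among the spanned vertices) determines $\dim X = \ell - (\text{number of edges in a spanning forest})$; concretely $\dim X = \ell - (\ell - c) = c$ is \emph{wrong} — rather $\dim X$ equals the number of blocks of the partition (a block of size $k$ contributes a $1$-dimensional family of translates). So $\mu(X) t^{\dim X}$ contributes $t^{b(P)}$ where $b(P)$ is the number of blocks. For $\mathcal{B}(\Gamma)$, a flat with $z$ coordinates set to $0$ and the rest partitioned into $b'$ blocks has dimension $b'$ (the $z$-part contributes $0$), and the Möbius function is computed in exactly the same poset once one accounts for the ``all-zero'' block. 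The key bookkeeping observation, which mirrors \cite[Theorem 2.10]{deshpande2023refinements-joac}, is that the flats of $\mathcal{B}(\Gamma)$ biject with the flats of $\mathcal{A}(\Gamma)$ \emph{together with a choice of which block (if any) is sent to $0$}, and the origin itself accounts for the extra factor. Tracking dimensions through this bijection: a flat of $\mathcal{A}(\Gamma)$ of dimension $d$ (i.e.\ $d$ blocks) yields flats of $\mathcal{B}(\Gamma)$ of dimension $d$ (no block zeroed) and dimension $d-1$ (one block zeroed, for each of the $d$ blocks), and the Möbius values are inherited compatibly. Summing $\mu \cdot t^{\dim}$ over $\mathcal{B}(\Gamma)$ then reorganizes into $\sum_X \mu_{\mathcal{A}}(X)(t^{\dim X} \cdot t^{\text{correction}})$; a short manipulation shows this equals $\chi(\mathcal{A}(\Gamma), t-1)$, equivalently $\chi(\mathcal{A}(\Gamma),t) = \chi(\mathcal{B}(\Gamma),t+1)$.

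Alternatively — and this may be the more economical writeup — I would prove the identity by induction on $|E_\Gamma|$ using deletion–contraction (Proposition \ref{deletion-restriction formula}) in tandem with Lemmas \ref{deletion of gain graphs} and \ref{contraction of gain graphs}. The base case $E_\Gamma = \varnothing$ is immediate: $\mathcal{A}(\Gamma)$ is empty in $\mathbb{Q}^\ell$ with $\chi = t^\ell$, while $\mathcal{B}(\Gamma)$ is the Boolean arrangement with $\chi = (t-1)^\ell$, and indeed $(t+1-1)^\ell = t^\ell$. For the inductive step, pick an edge $e = [i,j,g]$; by Lemma \ref{deletion of gain graphs}, deleting the hyperplane $\{x_i - x_j = g\}$ from $\mathcal{A}(\Gamma)$ gives $\mathcal{A}(\Gamma \setminus e)$, and deleting $\{x_i - q^g x_j = 0\}$ from $\mathcal{B}(\Gamma)$ gives $\mathcal{B}(\Gamma \setminus e)$; by Lemma \ref{contraction of gain graphs} the corresponding restrictions are affinely equivalent to $\mathcal{A}(\Gamma/e)$ and $\mathcal{B}(\Gamma/e)$ respectively, and affine equivalence preserves the characteristic polynomial. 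Then
\begin{align*}
\chi(\mathcal{A}(\Gamma), t) &= \chi(\mathcal{A}(\Gamma\setminus e), t) - \chi(\mathcal{A}(\Gamma/e), t) \\
&= \chi(\mathcal{B}(\Gamma\setminus e), t+1) - \chi(\mathcal{B}(\Gamma/e), t+1) = \chi(\mathcal{B}(\Gamma), t+1),
\end{align*}
where the middle equality is the induction hypothesis applied to the smaller gain graphs $\Gamma \setminus e$ and $\Gamma/e$, and the last equality is deletion–restriction for $\mathcal{B}(\Gamma)$. Finally, the cone formula $\chi(\mathbf{c}\mathcal{A}(\Gamma), t) = (t-1)\chi(\mathcal{A}(\Gamma), t)$ from Proposition \ref{cone} gives the second displayed identity directly.

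The main obstacle is subtle and lives in the inductive step: I must verify that the number of vertices does not need to match for the induction to close, since $\Gamma/e$ has $\ell - 1$ vertices while $\mathcal{A}(\Gamma/e)$ as an \emph{affinographic arrangement} of the contracted graph lives in $\mathbb{Q}^{\ell-1}$ — but the restriction $\mathcal{A}(\Gamma)^{\{x_i - x_j = g\}}$ naturally lives in an $(\ell-1)$-dimensional affine subspace, so Lemma \ref{contraction of gain graphs} already packages this correctly and the induction is on $|E_\Gamma|$ with $\ell$ allowed to vary. The second delicate point is confirming that affine equivalence (not merely linear equivalence) suffices to equate characteristic polynomials — this is standard since the intersection poset is an affine invariant. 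A final check: one must make sure the contraction $\Gamma/e$ is again a \emph{simple} gain graph (no loops, no repeated edges) or, if parallel edges with distinct gains collapse, that this is handled — but a parallel pair $[k,i,h], [k,i,h']$ contracting to $[k,j,h+g],[k,j,h'+g]$ stays distinct when $h \neq h'$, and a pair becoming equal would already have forced an empty intersection, consistent with the poset description; I would remark on this briefly rather than belabor it.
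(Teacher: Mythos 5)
Your second argument --- induction on $|E_\Gamma|$ via deletion--contraction, Lemmas \ref{deletion of gain graphs} and \ref{contraction of gain graphs}, the base case $t^\ell$ versus $(t-1)^\ell$, and Proposition \ref{cone} for the coned identity --- is exactly the paper's proof, and it is correct as written. The lattice-bijection sketch in your first paragraph is unnecessary (and partly garbled), so you can drop it and keep only the inductive argument.
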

\begin{proof} 
We will prove this result by induction on $|E_\Gamma|$.
If $|E_\Gamma|=0$, then $\A(\Gamma)$ is the empty arrangement and $\B(\Gamma)$ is the boolean arrangement. This implies that 
$\chi(\A(\Gamma),t)=t^\ell$ and $\chi(\B(\Gamma),t)=(t-1)^\ell$, and hence $\chi(\A(\Gamma),t)=\chi(\B(\Gamma),t+1)$.

Assume $|E_\Gamma|\ge 1$ and consider $e\in E_\Gamma$. Then 
\begin{align*}
\chi(\A(\Gamma),t)&=\chi(\A(\Gamma\setminus e),t)-\chi(\A(\Gamma/ e),t), \\
\chi(\B(\Gamma),t+1)&=\chi(\B(\Gamma\setminus e),t+1)-\chi(\B(\Gamma/ e),t+1)
\end{align*}
by Proposition \ref{deletion-restriction formula}, Lemma \ref{deletion of gain graphs}, and Lemma \ref{contraction of gain graphs}. 
Since $|E_{\Gamma\setminus e}|, |E_{\Gamma/e}|<|E_\Gamma|$, by the induction hypothesis, $\chi(\A(\Gamma),t)=\chi(\B(\Gamma),t+1)$.

Using Proposition \ref{cone}, we have 
\begin{align*}
\chi(\mathbf{c}\A(\Gamma),t) = (t-1)\chi(\mathcal{A},t) = (t-1)\chi(\B(\Gamma),t+1)
\end{align*}
\end{proof}

\begin{corollary}\label{exponents of A and B}
Suppose that $\mathbf{c}\mathcal{A}(\Gamma)$ and $\mathcal{B}(\Gamma)$ are  free and let $\exp(\mathbf{c}\mathcal{A}(\Gamma)) = (0,1,d_{2}, \dots, d_{\ell})$. 
Then $\exp(\mathcal{B}(\Gamma)) = (1,d_{2}+1, \dots, d_{\ell}+1)$. 
\end{corollary}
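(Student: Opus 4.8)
The plan is to combine Lemma~\ref{lem:equalbetweencharpoly} with the Factorization Theorem (Theorem~\ref{factorization theorem}). The Factorization Theorem tells us that for a free arrangement the characteristic polynomial factors completely, with the exponents as the (negatives of the) roots. So the strategy is: read off the exponents of $\mathbf{c}\mathcal{A}(\Gamma)$ and of $\mathcal{B}(\Gamma)$ from their characteristic polynomials, and then use the identity $\chi(\mathbf{c}\mathcal{A}(\Gamma),t)=(t-1)\chi(\mathcal{B}(\Gamma),t+1)$ to compare the two factorizations.

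Concretely, I would first apply Theorem~\ref{factorization theorem} to $\mathbf{c}\mathcal{A}(\Gamma)$: since its exponents are $(0,1,d_2,\dots,d_\ell)$, we get
\begin{align*}
\chi(\mathbf{c}\mathcal{A}(\Gamma),t) = t(t-1)(t-d_2)\cdots(t-d_\ell).
\end{align*}
Next, let $\exp(\mathcal{B}(\Gamma)) = (e_1,\dots,e_\ell)$; again by Theorem~\ref{factorization theorem},
\begin{align*}
\chi(\mathcal{B}(\Gamma),t) = (t-e_1)\cdots(t-e_\ell),
\end{align*}
so that
\begin{align*}
(t-1)\chi(\mathcal{B}(\Gamma),t+1) = (t-1)(t+1-e_1)\cdots(t+1-e_\ell) = (t-1)(t-(e_1-1))\cdots(t-(e_\ell-1)).
\end{align*}
Now Lemma~\ref{lem:equalbetweencharpoly} equates this with $t(t-1)(t-d_2)\cdots(t-d_\ell)$. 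Cancelling the common factor $t-1$ (both sides are polynomials over a field, so this is legitimate) and comparing the two monic degree-$\ell$ factorizations, uniqueness of factorization gives that the multiset $\{e_1-1,\dots,e_\ell-1\}$ equals $\{0,d_2,\dots,d_\ell\}$, i.e.\ $\{e_1,\dots,e_\ell\} = \{1,d_2+1,\dots,d_\ell+1\}$ as multisets. Hence $\exp(\mathcal{B}(\Gamma)) = (1,d_2+1,\dots,d_\ell+1)$ up to reordering, which is all that the statement claims.

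There is essentially no hard part here; the one point requiring a word of care is the cancellation of $t-1$ and the appeal to unique factorization of polynomials, which is fine since we work over a field ($\mathbb{Q}$ or $\mathbb{C}$, or the polynomial ring $\mathbb{R}[t]$), and the subsequent reading-off of exponents as an unordered multiset — matching the convention used throughout the paper, where $\exp$ is defined only up to permutation. One should also note in passing that $\chi(\mathcal{B}(\Gamma),t)$ genuinely has degree $\ell$ (so the comparison is between polynomials of the same degree): this follows because $\mathcal{B}(\Gamma)$ is a central essential arrangement in $\mathbb{C}^\ell$, or simply because $\mathcal{B}(\Gamma)$ being free with $\ell$ exponents forces $\deg\chi(\mathcal{B}(\Gamma),t)=\ell$ by Theorem~\ref{factorization theorem} itself. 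So the proof is just: factor both characteristic polynomials via Theorem~\ref{factorization theorem}, substitute into Lemma~\ref{lem:equalbetweencharpoly}, cancel $t-1$, and match roots.
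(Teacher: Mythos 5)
Your proof is correct and is exactly the argument the paper intends: the paper's own proof is the one-line instruction ``Use Lemma~\ref{lem:equalbetweencharpoly} and Theorem~\ref{factorization theorem},'' and your write-up simply carries out that combination in detail (factor both characteristic polynomials, substitute into the identity $\chi(\mathbf{c}\mathcal{A}(\Gamma),t)=(t-1)\chi(\mathcal{B}(\Gamma),t+1)$, cancel $t-1$, and match roots). No gaps; the degree and multiset bookkeeping you note is the only point of care and you handle it correctly.
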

\begin{proof}
Use Lemma \ref{lem:equalbetweencharpoly} and Theorem \ref{factorization theorem}. 
\end{proof}

\section{Proofs of Theorem \ref{main theorem IF} and \ref{main theorem DF}}\label{sec:proof of main theorems}

\begin{theorem}[Restatement of Theorem \ref{main theorem IF}]
Let $\Gamma$ be a simple gain graph. 
Then the following statements are equivalent
\begin{enumerate}[(1)]
\item $\mathbf{c}\A(\Gamma)$ is inductively free along edges. 
\item $\B(\Gamma)$ is inductively free along edges.
\end{enumerate}
\end{theorem}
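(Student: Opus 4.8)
The plan is to argue by induction on the number of edges $|E_{\Gamma}|$, taking advantage of the fact that the recursive conditions defining inductive freeness along edges are phrased in terms of the very same two operations $\Gamma\mapsto\Gamma\setminus e$ and $\Gamma\mapsto\Gamma/e$ on both the affinographic and the bias side. The base case $E_{\Gamma}=\varnothing$ is immediate: clause (1) of Definition \ref{definition IF along edges} (and its analogue for $\mathcal{B}(\Gamma)$) makes both statements true, so the biconditional holds. For the inductive step I fix $\Gamma$ with $|E_{\Gamma}|\ge 1$ and prove $(1)\Rightarrow(2)$; the converse is the mirror image, obtained by reading the exponent dictionary of Corollary \ref{exponents of A and B} backwards.

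So assume $\mathbf{c}\mathcal{A}(\Gamma)$ is inductively free along edges, witnessed by some $e\in E_{\Gamma}$: thus $\mathbf{c}\mathcal{A}(\Gamma\setminus e)$ and $\mathbf{c}\mathcal{A}(\Gamma/e)$ are inductively free along edges and $\exp(\mathbf{c}\mathcal{A}(\Gamma/e))\subseteq\exp(\mathbf{c}\mathcal{A}(\Gamma\setminus e))$. Since $|E_{\Gamma\setminus e}|<|E_{\Gamma}|$ and $|E_{\Gamma/e}|<|E_{\Gamma}|$ (the edge $e$ disappears under contraction, which can only identify further edges), the induction hypothesis applies to $\Gamma\setminus e$ and $\Gamma/e$, giving that $\mathcal{B}(\Gamma\setminus e)$ and $\mathcal{B}(\Gamma/e)$ are inductively free along edges; in particular all four arrangements $\mathbf{c}\mathcal{A}(\Gamma\setminus e)$, $\mathbf{c}\mathcal{A}(\Gamma/e)$, $\mathcal{B}(\Gamma\setminus e)$, $\mathcal{B}(\Gamma/e)$ are free. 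It then suffices to verify $\exp(\mathcal{B}(\Gamma/e))\subseteq\exp(\mathcal{B}(\Gamma\setminus e))$, since that makes $e$ a witness for $\mathcal{B}(\Gamma)$ being inductively free along edges.

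To obtain this last containment I invoke Corollary \ref{exponents of A and B} and its evident analogue for gain graphs on $\ell-1$ vertices. The exponents $0$ and $1$ are always present on the affinographic side (the $1$ from the Euler derivation, the $0$ from non-essentiality of $\mathcal{A}(\Gamma\setminus e)$ and $\mathcal{A}(\Gamma/e)$), so I may write $\exp(\mathbf{c}\mathcal{A}(\Gamma\setminus e))=(0,1,d_2,\dots,d_\ell)$ and $\exp(\mathbf{c}\mathcal{A}(\Gamma/e))=(0,1,c_2,\dots,c_{\ell-1})$, whence $\exp(\mathcal{B}(\Gamma\setminus e))=(1,d_2+1,\dots,d_\ell+1)$ and $\exp(\mathcal{B}(\Gamma/e))=(1,c_2+1,\dots,c_{\ell-1}+1)$. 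From the hypothesis $\{0,1,c_2,\dots,c_{\ell-1}\}\subseteq\{0,1,d_2,\dots,d_\ell\}$ as multisets, cancelling one $0$ and one $1$ from each side gives $\{c_2,\dots,c_{\ell-1}\}\subseteq\{d_2,\dots,d_\ell\}$, hence $\{c_2+1,\dots,c_{\ell-1}+1\}\subseteq\{d_2+1,\dots,d_\ell+1\}$ by shifting, hence $\exp(\mathcal{B}(\Gamma/e))\subseteq\exp(\mathcal{B}(\Gamma\setminus e))$ after re-adjoining a $1$ to each side. I expect the only real care to be bookkeeping rather than conceptual: keeping straight the two ambient dimensions ($\mathbf{c}\mathcal{A}(\Gamma)$ lives in dimension $\ell+1$, $\mathcal{B}(\Gamma)$ in dimension $\ell$, and both drop by one under contraction), and making sure the multiset containment is transported exactly across the correspondence $(0,1,d_2,\dots)\leftrightarrow(1,d_2+1,\dots)$, which is where the uniform presence of $0$ and $1$ — and thus non-essentiality of the affinographic side — is used. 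Everything else is a direct unwinding of Definition \ref{definition IF along edges}, the deletion/contraction compatibilities of Lemmas \ref{deletion of gain graphs} and \ref{contraction of gain graphs}, and Lemma \ref{lem:equalbetweencharpoly}.
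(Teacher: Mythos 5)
Your proposal is correct and follows essentially the same route as the paper: induction on $|E_{\Gamma}|$, transporting the witnessing edge $e$ across the two classes via the deletion/contraction compatibilities and Corollary \ref{exponents of A and B}. The paper's proof is simply a terser version of yours, leaving implicit the multiset bookkeeping (cancelling the exponents $0$ and $1$ and shifting by $1$) that you spell out.
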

\begin{proof}
We will proceed by induction on $|E_{\Gamma}|$. 
If $|E_\Gamma|=0$, then both of $\mathbf{c}\A(\Gamma)$ and $\B(\Gamma)$ are inductively free along edges by definition. 

Suppose that $|E_{\Gamma}| \geq 1$ and $\mathbf{c}\A(\Gamma)$ is inductively free along edges. 
Then there exists $e \in E_{\Gamma}$ such that $\mathbf{c}\mathcal{A}(\Gamma/e)$ and $\mathbf{c}\mathcal{A}(\Gamma\setminus e)$ are inductively free along edges with $\exp(\mathbf{c}\mathcal{A}(\Gamma/e)) \subseteq \exp(\mathbf{c}\mathcal{A}(\Gamma\setminus e))$. 
By the induction hypothesis, $\mathcal{B}(\Gamma/e)$ and $\mathcal{B}(\Gamma \setminus e)$ are inductively free along edges. 
Since $\exp(\mathcal{B}(\Gamma/e)) \subseteq \exp(\mathcal{B}(\Gamma \setminus e))$ by Corollary \ref{exponents of A and B}, we conclude that $\mathcal{B}(\Gamma)$ is inductively free along edges. 
The opposite implication is similar. 
\end{proof}

\begin{theorem}[Restatement of Theorem \ref{main theorem DF}]
Let $\Gamma$ be a simple gain graph. 
Then the following statements are equivalent. 
\begin{enumerate}[(1)]
\item $\mathbf{c}\A(\Gamma)$ is divisionally free along edges. 
\item $\B(\Gamma)$ is divisionally free along edges.
\end{enumerate}
\end{theorem}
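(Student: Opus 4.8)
The plan is to mimic exactly the structure of the proof just given for Theorem~\ref{main theorem IF}, replacing the inductive-freeness bookkeeping (exponent containment) with the divisional one (divisibility of characteristic polynomials). We proceed by induction on $|E_{\Gamma}|$. The base case $|E_{\Gamma}| = 0$ is immediate: both $\mathbf{c}\mathcal{A}(\Gamma)$ and $\mathcal{B}(\Gamma)$ are divisionally free along edges by definition.

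For the inductive step, assume $|E_{\Gamma}| \geq 1$ and that $\mathbf{c}\mathcal{A}(\Gamma)$ is divisionally free along edges. Then there is an edge $e \in E_{\Gamma}$ with $\mathbf{c}\mathcal{A}(\Gamma/e)$ divisionally free along edges and $\chi(\mathbf{c}\mathcal{A}(\Gamma/e),t)$ dividing $\chi(\mathbf{c}\mathcal{A}(\Gamma),t)$. By the induction hypothesis applied to $\Gamma/e$ (which has fewer edges), $\mathcal{B}(\Gamma/e)$ is divisionally free along edges. The remaining task is to transfer the divisibility statement from the $\mathbf{c}\mathcal{A}$ side to the $\mathcal{B}$ side. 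Here I would invoke Lemma~\ref{lem:equalbetweencharpoly}: from $\chi(\mathbf{c}\mathcal{A}(\Gamma),t) = (t-1)\chi(\mathcal{B}(\Gamma),t+1)$ and the analogous identity for $\Gamma/e$, one sees that $\chi(\mathbf{c}\mathcal{A}(\Gamma/e),t) \mid \chi(\mathbf{c}\mathcal{A}(\Gamma),t)$ is equivalent to $(t-1)\chi(\mathcal{B}(\Gamma/e),t+1) \mid (t-1)\chi(\mathcal{B}(\Gamma),t+1)$, hence to $\chi(\mathcal{B}(\Gamma/e),t+1) \mid \chi(\mathcal{B}(\Gamma),t+1)$, and finally, after the substitution $t \mapsto t-1$, to $\chi(\mathcal{B}(\Gamma/e),t) \mid \chi(\mathcal{B}(\Gamma),t)$. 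Therefore $\mathcal{B}(\Gamma)$ is divisionally free along edges. The opposite implication runs symmetrically, starting from an edge $e$ witnessing divisional freeness along edges of $\mathcal{B}(\Gamma)$ and running the same chain of equivalences backwards.

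The one point that requires a small amount of care — and the only place I would expect any friction — is the divisibility transfer: one must make sure that polynomial divisibility in $\mathbb{Q}[t]$ (or $\mathbb{Z}[t]$, since these characteristic polynomials are monic with integer coefficients) is genuinely preserved under multiplication by the common nonzero factor $t-1$ and under the affine change of variable $t \mapsto t \pm 1$. Both are elementary: $t - 1$ is not a zero divisor, and $f(t) \mapsto f(t+1)$ is a ring automorphism of $\mathbb{Q}[t]$, so it carries divisibilities to divisibilities in both directions. With that observation in hand the argument is a routine induction paralleling the proof of Theorem~\ref{main theorem IF}, and no genuinely new idea beyond Lemma~\ref{lem:equalbetweencharpoly} is needed.

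\begin{proof}
We proceed by induction on $|E_{\Gamma}|$.
If $|E_{\Gamma}| = 0$, then both $\mathbf{c}\mathcal{A}(\Gamma)$ and $\mathcal{B}(\Gamma)$ are divisionally free along edges by definition.

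Suppose that $|E_{\Gamma}| \geq 1$ and that $\mathbf{c}\mathcal{A}(\Gamma)$ is divisionally free along edges.
Then there exists $e \in E_{\Gamma}$ such that $\mathbf{c}\mathcal{A}(\Gamma/e)$ is divisionally free along edges and $\chi(\mathbf{c}\mathcal{A}(\Gamma/e),t)$ divides $\chi(\mathbf{c}\mathcal{A}(\Gamma),t)$.
By the induction hypothesis, $\mathcal{B}(\Gamma/e)$ is divisionally free along edges.
By Lemma \ref{lem:equalbetweencharpoly} applied to $\Gamma$ and to $\Gamma/e$, we have
\begin{align*}
\chi(\mathbf{c}\mathcal{A}(\Gamma),t) &= (t-1)\chi(\mathcal{B}(\Gamma),t+1), \\
\chi(\mathbf{c}\mathcal{A}(\Gamma/e),t) &= (t-1)\chi(\mathcal{B}(\Gamma/e),t+1).
\end{align*}
Since $t-1$ is a non-zero-divisor in $\mathbb{Q}[t]$, the divisibility $\chi(\mathbf{c}\mathcal{A}(\Gamma/e),t) \mid \chi(\mathbf{c}\mathcal{A}(\Gamma),t)$ is equivalent to $\chi(\mathcal{B}(\Gamma/e),t+1) \mid \chi(\mathcal{B}(\Gamma),t+1)$, and applying the automorphism $f(t) \mapsto f(t-1)$ of $\mathbb{Q}[t]$ this is in turn equivalent to $\chi(\mathcal{B}(\Gamma/e),t) \mid \chi(\mathcal{B}(\Gamma),t)$.
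Hence $\mathcal{B}(\Gamma)$ is divisionally free along edges.
The opposite implication is similar.
\end{proof}
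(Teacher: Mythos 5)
Your proof is correct and follows essentially the same route as the paper: induction on $|E_{\Gamma}|$, with Lemma \ref{lem:equalbetweencharpoly} used to transfer the divisibility condition between $\chi(\mathbf{c}\mathcal{A}(\cdot),t)$ and $\chi(\mathcal{B}(\cdot),t)$. The only difference is that you spell out the elementary justification for the transfer (cancelling the nonzero factor $t-1$ and using that $t \mapsto t+1$ is a ring automorphism), which the paper leaves implicit.
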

\begin{proof}
We will prove this result by induction on $|E_\Gamma|$.
If $|E_\Gamma|=0$, then $\mathbf{c}\A(\Gamma)$ and $\B(\Gamma)$ are both divisionally free along edges by definition.

Consider the case $|E_\Gamma|\ge 1$. Assume that $\mathbf{c}\A(\Gamma)$ is divisionally free along edges, and hence consider $e\in E_\Gamma$ such that $\mathbf{c}\A(\Gamma/ e)$ is divisionally free along edges and $\chi(\mathbf{c}\A(\Gamma/ e),t)$ divides $\chi(\mathbf{c}\A(\Gamma),t)$. By the induction hypothesis and Lemma \ref{lem:equalbetweencharpoly}, we have that $\B(\Gamma/ e)$ is divisionally free along edges and $\chi(\B(\Gamma/ e),t)$ divides $\chi(\B(\Gamma),t)$. This implies that $\B(\Gamma)$ is divisionally free along edges. A similar argument proves the opposite implication.
\end{proof}

As mentioned in Example \ref{ex:Catalan-Shi}, the cones over the extended Catalan and Shi arrangements $\mathrm{c}\Cat(\ell,m)$ and $\mathrm{c}\Shi(\ell,m)$ are inductively free. 
Moreover, according to the proofs, they are inductively free along edges. 
Hence we obtain the following corollaries. 

\begin{corollary}\label{Fuss-Catalan free}
The arrangement determined by
\begin{align*}
x_{1} \cdots x_{\ell} \prod_{\substack{1 \leq i < j \leq \ell \\ -m \leq g \leq m}} (x_{i}-q^{g}x_{j}). 
\end{align*}
is inductively free along edges with exponents $(1, m\ell+2, m\ell+3, m\ell+\ell)$. 
\end{corollary}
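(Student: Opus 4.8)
The plan is to deduce Corollary~\ref{Fuss-Catalan free} by combining Example~\ref{ex:DMS}, Lemma~\ref{lem:equalbetweencharpoly}, Corollary~\ref{exponents of A and B}, and Theorem~\ref{main theorem IF}. First I would recall from Example~\ref{ex:DMS} that the arrangement determined by $x_{1}\cdots x_{\ell}\prod_{1\le i<j\le\ell,\,-m\le g\le m}(x_{i}-q^{g}x_{j})$ is exactly the bias arrangement $\mathcal{B}(\Gamma)$, where $\Gamma$ is the simple gain graph with $G_{\Gamma}=\mathbb{Z}$, $V_{\Gamma}=[\ell]$, and $E_{\Gamma}=\Set{[i,j,g] \mid 1\le i<j\le\ell,\ -m\le g\le m}$. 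The corresponding affinographic arrangement $\mathcal{A}(\Gamma)$ is then the extended Catalan arrangement $\Cat(\ell,m)$.

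Next I would invoke the remark made just before the corollary: by the proofs of Edelman--Reiner \cite{edelman1996free-dcg} and Athanasiadis \cite{athanasiadis1998free-ejoc} cited in Example~\ref{ex:Catalan-Shi}, the cone $\mathbf{c}\Cat(\ell,m)=\mathbf{c}\mathcal{A}(\Gamma)$ is not merely inductively free but inductively free along edges in the sense of Definition~\ref{definition IF along edges}. Applying Theorem~\ref{main theorem IF}, it follows immediately that $\mathcal{B}(\Gamma)$ is inductively free along edges, and in particular (by the remark following Definition~\ref{definition DF along edges}) inductively free, hence free.

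It remains to pin down the exponents. The standard fact is $\exp(\mathbf{c}\Cat(\ell,m))=(0,1,m\ell+1,m\ell+2,\dots,m\ell+\ell-1)$, equivalently $\exp(\Cat(\ell,m))=(0,m\ell+1,m\ell+2,\dots,m\ell+\ell-1)$ up to the zero coming from non-essentiality; writing $\exp(\mathbf{c}\mathcal{A}(\Gamma))=(0,1,d_{2},\dots,d_{\ell})$ with $d_{k}=m\ell+k-1$, Corollary~\ref{exponents of A and B} gives $\exp(\mathcal{B}(\Gamma))=(1,d_{2}+1,\dots,d_{\ell}+1)=(1,m\ell+2,m\ell+3,\dots,m\ell+\ell)$, which is exactly the claimed tuple $(1,m\ell+2,m\ell+3,m\ell+\ell)$ (with the obvious convention that the list runs through all integers from $m\ell+2$ to $m\ell+\ell$). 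Alternatively one can read off these exponents directly from $\chi(\mathcal{B}(\Gamma),t)=\chi(\mathcal{A}(\Gamma),t+1)/\,$(suitable factor) via Lemma~\ref{lem:equalbetweencharpoly} together with the Factorization Theorem~\ref{factorization theorem}, using the known factorization $\chi(\Cat(\ell,m),t)=t\prod_{k=1}^{\ell-1}(t-m\ell-k)$.

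The only genuine subtlety — and the step I would be most careful about — is the claim that the known inductive-freeness proofs for $\mathbf{c}\Cat(\ell,m)$ actually proceed \emph{along edges}, i.e.\ that the sequence of additions and restrictions used there corresponds to deleting and contracting edges of the gain graph $\Gamma$ rather than arbitrary hyperplanes. This needs a short justification: one should check that the inductive chain of Edelman--Reiner/Athanasiadis removes one hyperplane $\{x_{i}-x_{j}=g\}$ at a time (never the hyperplane at infinity, which is forced to stay), so that each deletion is $\mathbf{c}\mathcal{A}(\Gamma\setminus e)$ and each restriction is $\mathbf{c}\mathcal{A}(\Gamma/e)$ by Lemmas~\ref{deletion of gain graphs} and~\ref{contraction of gain graphs}; the exponent-inclusion hypothesis is then inherited from their argument. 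Once this bookkeeping is in place the corollary is immediate.
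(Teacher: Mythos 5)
Your proposal is correct and follows essentially the same route as the paper: identify the arrangement as $\mathcal{B}(\Gamma)$ for the complete gain graph with gains in $\{-m,\dots,m\}$, note that $\mathbf{c}\mathcal{A}(\Gamma)=\mathbf{c}\Cat(\ell,m)$ is inductively free along edges by the Edelman--Reiner/Athanasiadis inductions, apply Theorem~\ref{main theorem IF}, and read off the exponents from Corollary~\ref{exponents of A and B}. The one subtlety you flag --- verifying that the known inductive chains really proceed edge by edge --- is exactly the point the paper itself asserts without further detail (``according to the proofs, they are inductively free along edges'').
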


\begin{corollary}
The arrangement determined by
\begin{align*}
x_{1} \cdots x_{\ell} \prod_{\substack{1 \leq i < j \leq \ell \\ 1-m \leq g \leq m}} (x_{i}-q^{g}x_{j}). 
\end{align*}
is inductively free along edges with exponents $(1, m\ell+1, m\ell+1, m\ell+1)$. 
\end{corollary}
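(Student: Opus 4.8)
The plan is to follow the same template as Corollary \ref{Fuss-Catalan free} and the Shi corollary immediately above, so the task reduces to two things: identifying the gain graph $\Gamma$ whose bias arrangement $\mathcal{B}(\Gamma)$ has the stated defining polynomial, and computing the exponents. For the first point, note that the product $\prod_{1\le i<j\le\ell,\ 1-m\le g\le m}(x_i-q^gx_j)$ together with $x_1\cdots x_\ell$ is exactly $\mathcal{B}(\Gamma)$ for the gain graph $\Gamma$ on $[\ell]$ with $G_\Gamma=\mathbb{Z}$ whose edge set consists of all $[i,j,g]$ with $1\le i<j\le\ell$ and $1-m\le g\le m$; by Example \ref{ex:Catalan-Shi} and Example \ref{ex:DMS} the corresponding affinographic arrangement $\mathcal{A}(\Gamma)$ is the extended Shi arrangement $\Shi(\ell,m)$ (it contains all hyperplanes $x_i-x_j=g$ for $-(m-1)\le g\le m-1$, i.e.\ $\Cat(\ell,m-1)$, together with the hyperplanes $x_i-x_j=m$). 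So $\mathbf{c}\mathcal{A}(\Gamma)=\mathrm{c}\Shi(\ell,m)$.

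Next I would invoke the fact, recorded in the paragraph before Corollary \ref{Fuss-Catalan free}, that $\mathrm{c}\Shi(\ell,m)$ is inductively free along edges (Edelman--Reiner and Athanasiadis, as cited in Example \ref{ex:Catalan-Shi}, actually give an addition--deletion argument that removes the gain-$m$ edges one at a time and then reduces $\Cat(\ell,m-1)$, which is a chain of deletions/contractions of edges, hence the induction in Definition \ref{definition IF along edges} applies). Then Theorem \ref{main theorem IF} yields that $\mathcal{B}(\Gamma)$ is inductively free along edges, and in particular free.

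It remains to pin down the exponents. The exponents of $\mathrm{c}\Shi(\ell,m)$ are well known to be $(0,1,m\ell+1,\dots,m\ell+1)$ ($\ell-1$ copies of $m\ell+1$ after the $0$ and the $1$): indeed $\chi(\Shi(\ell,m),t)=t(t-m\ell)^{\ell-1}$ by Athanasiadis, and since $\mathrm{c}\Shi(\ell,m)$ is free the Factorization Theorem \ref{factorization theorem} gives $\chi(\mathbf{c}\Shi(\ell,m),t)=(t-1)\chi(\Shi(\ell,m),t)=t(t-1)(t-m\ell)^{\ell-1}$, so $\exp(\mathbf{c}\mathcal{A}(\Gamma))=(0,1,m\ell+1,\dots,m\ell+1)$. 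Now apply Corollary \ref{exponents of A and B} with $(0,1,d_2,\dots,d_\ell)=(0,1,m\ell+1,\dots,m\ell+1)$, i.e.\ $d_i=m\ell+1$ for all $i\ge 2$, to get $\exp(\mathcal{B}(\Gamma))=(1,m\ell+2,\dots,m\ell+2)$ --- wait, that is $(1,d_2+1,\dots)=(1,m\ell+2,\dots,m\ell+2)$, whereas the claimed answer is $(1,m\ell+1,m\ell+1,m\ell+1)$; so I would instead use the exponents of $\mathrm{c}\Shi$ in the form $(0,1,m\ell,\dots,m\ell)$ coming from $\chi(\Shi(\ell,m),t)=t(t-m\ell)^{\ell-1}$ correctly parsed (the cone adds the factor $(t-1)$, contributing the exponent $1$, while the $\ell$ original factors contribute $0,m\ell,\dots,m\ell$), giving $\exp(\mathbf{c}\mathcal{A}(\Gamma))=(0,1,m\ell,\dots,m\ell)$ and hence by Corollary \ref{exponents of A and B} $\exp(\mathcal{B}(\Gamma))=(1,m\ell+1,\dots,m\ell+1)$, matching the statement.

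The main obstacle is the bookkeeping in this last paragraph: one must be careful about which of the $\ell+1$ cone exponents is the ``extra'' $0$ versus the ``extra'' $1$, and correspondingly how Corollary \ref{exponents of A and B} shifts them; the cleanest route is to write down $\chi(\mathbf{c}\mathcal{A}(\Gamma),t)=t(t-1)(t-m\ell)^{\ell-1}$ explicitly, read off $\exp(\mathbf{c}\mathcal{A}(\Gamma))=(0,1,m\ell,\dots,m\ell)$, then quote Corollary \ref{exponents of A and B} (equivalently, use Lemma \ref{lem:equalbetweencharpoly} directly: $\chi(\mathcal{B}(\Gamma),t)=\chi(\mathbf{c}\mathcal{A}(\Gamma),t-1)/(t-2)=(t-1)(t-m\ell-1)^{\ell-1}$, so $\exp(\mathcal{B}(\Gamma))=(1,m\ell+1,\dots,m\ell+1)$). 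Everything else is a direct citation of results already in the excerpt.
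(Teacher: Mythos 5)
Your proposal is correct and matches the paper's (implicit) argument exactly: identify the arrangement as $\mathcal{B}(\Gamma)$ for the gain graph with $\mathcal{A}(\Gamma)=\Shi(\ell,m)$, invoke inductive freeness along edges of $\mathrm{c}\Shi(\ell,m)$ together with Theorem \ref{main theorem IF}, and read off the exponents from $\chi(\mathrm{c}\Shi(\ell,m),t)=t(t-1)(t-m\ell)^{\ell-1}$ via Corollary \ref{exponents of A and B}. The momentary slip about whether the cone exponents are $m\ell$ or $m\ell+1$ is correctly resolved in your own text, and the final exponents $(1,m\ell+1,\dots,m\ell+1)$ are right.
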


\section{Proof of Theorem \ref{main theorem signed}}\label{sec:signed}
In this section, we suppose that $G_{\Gamma} = \mathbb{F}_{2}$. 
Since the additive group of $\mathbb{F}_{2}$ is isomorphic to the multiplicative group $\{\pm 1\}$, the gain graph $\Gamma$ is called a \textbf{signed graph}. 
We call an edge with gain $0$ (resp. $1$) a \textbf{positive edge} (resp. \textbf{negative edge}). 
When $\Gamma$ is a signed graph, we call $\mathcal{B}(\Gamma)$ the \textbf{signed graphic arrangement}. 

A cycle in a signed graph $\Gamma$ is called \textbf{balanced} is the number of negative edges in it is even. 
Otherwise, we call it \textbf{unbalanced}. 
A signed graph $\Gamma$ is called \textbf{balanced chordal} if every balanced cycle in $\Gamma$ of length at least four has a chord separating the cycle into two balanced cycles. 

A \textbf{switching} at a vertex $i$ in $\Gamma$ is an operation interchanging the signs of the edges incident to $i$. 
We say that two signed graphs are \textbf{switching equivalent} if one is obtained by applying a finite number of switchings to the other. 
Note that if two signed graphs $\Gamma$ and $\Gamma^{\prime}$ are switching equivalent, then the corresponding signed graphic arrangements $\mathcal{B}(\Gamma)$ and $\mathcal{B}(\Gamma^{\prime})$ are affinely equivalent since a switching at $i$ corresponds to the coordinate change $x_{i} \mapsto -x_{i}$. 
The affinographic arrangements $\mathcal{A}(\Gamma)$ and $\mathcal{A}(\Gamma^{\prime})$ are affinely equivalent since switching at $i$ corresponds to the coordinate change $x_{i} \mapsto x_{i}+1$ and the base field is $\mathbb{F}_{2}$. 

The authors previous works characterized freeness of signed graphic arrangement $\mathcal{B}(\Gamma)$ as follows. 

\begin{theorem}[\cite{suyama2019signed,torielli2020freeness-tejoc}]\label{freeness of signed graphic arrangement}
Let $\Gamma$ be a signed graph. 
Then the following conditions are equivalent. 
\begin{enumerate}[(1)]
\item\label{freeness of signed graphic arrangement 1} $\Gamma$ satisfies the following three conditions. 
\begin{enumerate}[(I)]
\item $\Gamma$ is balanced chordal. 
\item $\Gamma$ has no induced subgraphs isomorphic to unbalanced cycles of length at least three. 
\item $\Gamma$ has no induced subgraphs which are switching equivalent to the graph in Figure \ref{Fig:obstruction signed graph}. 
\begin{figure}[t]
\centering
\begin{tikzpicture}
\draw (0,1) node[v](1){}; 
\draw (0,0) node[v](2){}; 
\draw (1,0) node[v](3){};
\draw (1,1) node[v](4){};
\draw (2)--(1)--(4)--(2)--(3)--(4);
\draw[dashed] (1)--(3);
\draw[dashed] (1) to [bend right] (2);
\draw[dashed] (3) to [bend right] (4);
\end{tikzpicture}
\caption{An obstruction to freeness for the signed graphic arrangement $\mathcal{B}(\Gamma)$. 
(Solid and dashed line segments denote positive and negative edges respectively.)}\label{Fig:obstruction signed graph}
\end{figure}
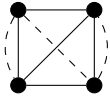
\end{enumerate}
\item \label{freeness of signed graphic arrangement 2} $\mathcal{B}(\Gamma)$ is divisionally free along edges. 
\item $\mathcal{B}(\Gamma)$ is free. 
\end{enumerate}
\end{theorem}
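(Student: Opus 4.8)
The strategy is to prove the cyclic chain of implications $(1)\Rightarrow(2)\Rightarrow(3)\Rightarrow(1)$. The implication $(2)\Rightarrow(3)$ is immediate, since divisional freeness along edges implies divisional freeness, which implies freeness, as already observed in the remarks following Definition \ref{definition DF along edges}. So the substantive work splits into the \emph{necessity} $(3)\Rightarrow(1)$ and the \emph{sufficiency} $(1)\Rightarrow(2)$, and for the latter I would in fact establish the formally stronger statement that $\mathcal{B}(\Gamma)$ is \emph{inductively} free along edges. This implies divisional freeness along edges: for the witnessing edge $e$ the arrangements $\mathcal{B}(\Gamma)$, $\mathcal{B}(\Gamma\setminus e)$, $\mathcal{B}(\Gamma/e)$ are then all free with $\exp(\mathcal{B}(\Gamma/e))\subseteq\exp(\mathcal{B}(\Gamma\setminus e))$, whence Proposition \ref{deletion-restriction formula}, Lemma \ref{contraction of gain graphs}, and Theorem \ref{factorization theorem} give $\chi(\mathcal{B}(\Gamma/e),t)\mid\chi(\mathcal{B}(\Gamma),t)$.

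For necessity I would argue by contraposition. If $\Gamma$ violates one of (I), (II), (III), extract from the lists of obstructions an induced subgraph $\Gamma'$ of $\Gamma$ --- an unbalanced cycle $C_{n}$ with $n\geq 3$, a minimal witness to the failure of balanced chordality, or a subgraph switching equivalent to the graph in Figure \ref{Fig:obstruction signed graph} --- and show that $\mathcal{B}(\Gamma')$ is not free. Then, writing $X$ for the coordinate flat $\{x_{k}=0 : k\notin V(\Gamma')\}$, one checks that the localization $\mathcal{B}(\Gamma)_{X}$ is, up to an empty (hence free) direct factor in the remaining coordinates, the arrangement $\mathcal{B}(\Gamma')$; then Proposition \ref{free localization} forces $\mathcal{B}(\Gamma)$ to be non-free. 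Non-freeness of each obstruction is checked by computing $\chi(\mathcal{B}(\Gamma'),t)$ recursively via Proposition \ref{deletion-restriction formula} and Lemmas \ref{deletion of gain graphs}, \ref{contraction of gain graphs} (or via Lemma \ref{lem:equalbetweencharpoly} and the finite-field method applied to $\mathcal{A}(\Gamma')$) and exhibiting a non-integral root, so that Theorem \ref{factorization theorem} precludes freeness. The delicate combinatorial point here is the reduction to a \emph{minimal} obstruction when $\Gamma$ is not balanced chordal: a shortest balanced cycle with no balance-separating chord need not itself be chordless, so a short case analysis on its chords (each necessarily balance-non-separating) is needed to isolate the induced configurations that actually occur.

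For sufficiency I would induct on $|E_{\Gamma}|$, the base case $E_{\Gamma}=\varnothing$ being the Boolean arrangement. The heart of the inductive step is a structural lemma: a signed graph satisfying (I)-(III) with at least one edge admits, after a suitable switching, a "simplicial" vertex $v$, namely one whose balanced neighborhood induces a clique with all incident edges positive; taking $e$ to be a positive edge at $v$, one verifies that both $\Gamma\setminus e$ and $\Gamma/e$ again satisfy (I)-(III), so that $\mathcal{B}(\Gamma\setminus e)$ and $\mathcal{B}(\Gamma/e)$ are inductively free along edges by the inductive hypothesis. Here conditions (II) and (III) do real work: they guarantee that contracting $e$ does not merge two oppositely-signed parallel edges (which would have forced an induced unbalanced triangle, forbidden by (II)) and does not create a copy of the sporadic obstruction of Figure \ref{Fig:obstruction signed graph}. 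Finally one reads off the exponents of $\mathcal{B}(\Gamma\setminus e)$ and $\mathcal{B}(\Gamma/e)$ from the recursive computation of $\chi(\mathcal{B}(\cdot),t)$ together with Theorem \ref{factorization theorem}, exactly as in the computations of Section \ref{sec:proof of main theorems}, and checks the containment $\exp(\mathcal{B}(\Gamma/e))\subseteq\exp(\mathcal{B}(\Gamma\setminus e))$.

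The main obstacle I anticipate is precisely this structural lemma: proving that every signed graph satisfying (I)-(III) with an edge has a simplicial vertex in the correct signed sense, and that deletion and contraction of a simplicial edge keep one inside the class cut out by the three conditions, requires carefully intertwining balanced chordality (which controls only balanced cycles) with conditions (II) and (III), all the while tracking things up to switching equivalence --- the arrangements $\mathcal{B}(\Gamma)$ and $\mathcal{A}(\Gamma)$ depend on $\Gamma$ only through its switching class, but the combinatorial hypotheses must be verified on concrete representatives. Getting the bookkeeping of signs, switchings, and parallel edges right across the contraction step is where most of the effort will lie.
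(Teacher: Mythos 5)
This theorem is not proved in the paper at all: it is imported verbatim from the authors' earlier works \cite{suyama2019signed,torielli2020freeness-tejoc}, and the only thing the present paper adds is the remark that the divisional-freeness witness constructed there is in fact ``along edges.'' So there is no in-paper proof to compare against; what can be assessed is whether your sketch would stand on its own.

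As a roadmap it is sound and consistent with how the cited papers actually proceed: $(2)\Rightarrow(3)$ is indeed immediate, the necessity $(3)\Rightarrow(1)$ goes by localizing to an induced obstruction and killing freeness via a non-factoring characteristic polynomial (exactly as this paper does for $\mathbf{c}\mathcal{A}(\Sigma)$ in the proof of Theorem \ref{main theorem signed}), and the sufficiency goes by an edge-by-edge induction. But as a proof it has two genuine gaps, and they are precisely where the cited papers spend most of their length. First, your structural lemma for $(1)\Rightarrow(2)$ --- that a signed graph satisfying (I)--(III) admits, up to switching, an edge $e$ such that \emph{both} $\Gamma\setminus e$ and $\Gamma/e$ remain in the class and the exponent containment holds --- is asserted, not proved; note in particular that deleting an edge can destroy balanced chordality and create new induced obstructions, so the choice of $e$ cannot be made naively, and the exponent bookkeeping requires knowing the exponents in closed form (in the references this is done via a vertex-elimination ordering and an explicit degree sequence, not by an ad hoc check). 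Second, for necessity you correctly flag that a failure of balanced chordality need not hand you a chordless induced cycle, but you do not resolve it: the case analysis isolating which induced configurations (balanced chordless cycles, unbalanced cycles, or the sporadic graph of Figure \ref{Fig:obstruction signed graph}) must occur in a minimal violation is itself a nontrivial lemma, and without it the contrapositive argument does not close. In short, your proposal correctly identifies the architecture of the proof and correctly locates the hard parts, but the hard parts are left as announced obstacles rather than carried out.
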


\begin{remark}
In the paper \cite{torielli2020freeness-tejoc}, the condition (\ref{freeness of signed graphic arrangement 2}) in Theorem \ref{freeness of signed graphic arrangement} does not assert ``along edges". 
However, it was shown that $\mathcal{B}(\Gamma)$ is divisionally free along edges if the condition (\ref{freeness of signed graphic arrangement 1}) holds. 
\end{remark}

\begin{theorem}[Restatement of Theorem \ref{main theorem signed}]
Let $\Gamma$ be a signed graph, that is, a simple gain graph with $G_{\Gamma} = \mathbb{F}_{2} \simeq \{\pm 1\}$. 
Then the following conditions are equivalent
\begin{enumerate}[(1)]
\item $\mathbf{c}\mathcal{A}(\Gamma)$ is free. 
\item $\mathcal{B}(\Gamma)$ is free. 
\end{enumerate}
\end{theorem}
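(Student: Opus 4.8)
The plan is to deduce this equivalence from the already-established machinery rather than to attack freeness directly. The key tool is Theorem~\ref{main theorem DF}, which says that $\mathbf{c}\mathcal{A}(\Gamma)$ is divisionally free along edges if and only if $\mathcal{B}(\Gamma)$ is divisionally free along edges, together with Theorem~\ref{freeness of signed graphic arrangement}, which for the signed case pins down freeness of $\mathcal{B}(\Gamma)$ exactly: $\mathcal{B}(\Gamma)$ is free if and only if $\mathcal{B}(\Gamma)$ is divisionally free along edges if and only if the combinatorial conditions (I)--(III) on $\Gamma$ hold. So the chain of implications I would set up is: $\mathcal{B}(\Gamma)$ free $\Rightarrow$ $\mathcal{B}(\Gamma)$ divisionally free along edges $\Rightarrow$ $\mathbf{c}\mathcal{A}(\Gamma)$ divisionally free along edges $\Rightarrow$ $\mathbf{c}\mathcal{A}(\Gamma)$ free (the last step because divisional freeness along edges implies divisional freeness, which implies freeness, as noted in the remark after Definition~\ref{definition DF along edges}).

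For the reverse direction, the issue is that ``$\mathbf{c}\mathcal{A}(\Gamma)$ free'' is a priori weaker than ``$\mathbf{c}\mathcal{A}(\Gamma)$ divisionally free along edges,'' so I cannot simply run the same chain backwards. Here I would use the characteristic polynomial bridge: by Lemma~\ref{lem:equalbetweencharpoly}, $\chi(\mathbf{c}\mathcal{A}(\Gamma),t)=(t-1)\chi(\mathcal{B}(\Gamma),t+1)$, and by the Factorization Theorem (Theorem~\ref{factorization theorem}) freeness of $\mathbf{c}\mathcal{A}(\Gamma)$ forces $\chi(\mathbf{c}\mathcal{A}(\Gamma),t)$ to split into nonnegative integer roots, hence so does $\chi(\mathcal{B}(\Gamma),t)$ via the shift. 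The aim is then to show that this combinatorial consequence—combined with whatever structural obstruction is forbidden—forces conditions (I)--(III) of Theorem~\ref{freeness of signed graphic arrangement}. Concretely, I would check each of the three forbidden configurations (an unbalanced induced cycle, a non-balanced-chordal balanced cycle, or the switching class of the graph in Figure~\ref{Fig:obstruction signed graph}) and verify that its presence as an induced subgraph of $\Gamma$ makes some localization $\mathbf{c}\mathcal{A}(\Gamma_X)$ non-free (using Proposition~\ref{free localization}: freeness passes to localizations), contradicting freeness of $\mathbf{c}\mathcal{A}(\Gamma)$. Since localization of $\mathbf{c}\mathcal{A}(\Gamma)$ at the appropriate flat corresponds to restricting $\Gamma$ to an induced subgraph, it suffices to verify non-freeness of $\mathbf{c}\mathcal{A}$ on each of the finitely many minimal obstructions.

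The main obstacle I anticipate is exactly this last verification: showing that each forbidden induced subgraph yields a non-free $\mathbf{c}\mathcal{A}(\Gamma)$. For the unbalanced-cycle and Figure~\ref{Fig:obstruction signed graph} obstructions one can compute the characteristic polynomial of the corresponding small affinographic arrangement (or its cone) and exhibit that it does not factor over $\mathbb{Z}_{\ge 0}$, which by the contrapositive of the Factorization Theorem rules out freeness; the balanced-non-chordal case reduces, after switching to an all-positive cycle, to the classical fact (Stanley/Edelman--Reiner) that a graphic arrangement is free iff the graph is chordal, so a long induced cycle with gains all $0$ already gives a non-free graphic subarrangement. Handling the switching-equivalence bookkeeping carefully—remembering that switching at a vertex is the coordinate change $x_i\mapsto x_i+1$ over $\mathbb{F}_2$ for $\mathcal{A}(\Gamma)$ and hence preserves freeness of $\mathbf{c}\mathcal{A}(\Gamma)$—is a routine but necessary ingredient. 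Once these non-freeness lemmas for the obstructions are in place, the two directions close up: $\mathbf{c}\mathcal{A}(\Gamma)$ free forces $\Gamma$ to avoid all obstructions, hence condition (I)--(III), hence $\mathcal{B}(\Gamma)$ is free by Theorem~\ref{freeness of signed graphic arrangement}.
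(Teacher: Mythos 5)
Your proposal is correct and follows essentially the same route as the paper: one direction via $\mathcal{B}(\Gamma)$ free $\Rightarrow$ divisionally free along edges $\Rightarrow$ $\mathbf{c}\mathcal{A}(\Gamma)$ divisionally free along edges $\Rightarrow$ free, and the converse by localizing $\mathbf{c}\mathcal{A}(\Gamma)$ at each forbidden induced subgraph (using Proposition \ref{free localization}) and verifying non-freeness there, with the balanced non-chordal cycle handled by switching to a graphic arrangement and the triangle and Figure \ref{Fig:obstruction signed graph} cases by non-splitting characteristic polynomials. The only divergence is that for unbalanced cycles of length at least four the paper inducts via Proposition \ref{restriction theorem} (deletion free, contraction non-free by induction) rather than computing the characteristic polynomial of the resulting circuit directly as you suggest, but both verifications go through.
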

\begin{proof}
First, suppose that $\mathcal{B}(\Gamma)$ is free. 
Then $\mathcal{B}(\Gamma)$ is divisionally free along edges by Theorem \ref{freeness of signed graphic arrangement}. 
Therefore $\mathbf{c}\mathcal{A}(\Gamma)$ is divisionally free along edges by Theorem \ref{main theorem DF} and hence free. 

Next, suppose that $\mathbf{c}\mathcal{A}(\Gamma)$ is free. 
In order to show $\mathcal{B}(\Gamma)$ is free, it suffices to prove that $\Gamma$ satisfies the condition (\ref{freeness of signed graphic arrangement 1}) in Theorem \ref{freeness of signed graphic arrangement}. 
Assume that $\Gamma$ does not satisfy the the condition (\ref{freeness of signed graphic arrangement 1}). 
Then $\Gamma$ has an induced subgraph $\Sigma$ satisfying one of the following three conditions. 
\begin{enumerate}[(i)]
\item\label{signed subgraph condition 1} $\Sigma$ is a balanced cycle of length at least four. 
\item\label{signed subgraph condition 2} $\Sigma$ is an unbalanced cycle of length at least three. 
\item\label{signed subgraph condition 3} $\Sigma$ is switching equivalent to the graph in Figure \ref{Fig:obstruction signed graph}. 
\end{enumerate}

Since $\mathbf{c}\mathcal{A}(\Sigma)$ is affinely equivalent to a localization of $\mathbf{c}\mathcal{A}(\Gamma)$, it is sufficient to show that $\mathbf{c}\mathcal{A}(\Sigma)$ is non-free by Proposition \ref{free localization}. 

Consider the case (\ref{signed subgraph condition 1}). 
By switching we may assume that the balanced cycle $\Sigma$ consists of positive edges. 
Then $\mathcal{A}(\Sigma)$ is a graphic arrangement (over $\mathbb{F}_{2}$) of the non-chordal graph $\Sigma$ and hence $\mathbf{c}\mathcal{A}(\Sigma)$ is non-free. 

Next, consider the case (\ref{signed subgraph condition 2}). 
Suppose that the length of the unbalanced cycle $\Sigma$ is three. 
Then the characteristic polynomial of $\mathcal{A}(\Sigma)$ is 
\begin{align*}
\chi(\mathcal{A}(\Sigma),t) = t(t^{2}-3t+3). 
\end{align*}
By Theorem \ref{factorization theorem}, $\mathbf{c}\mathcal{A}(\Sigma)$ is non-free. 
Assume that the length of $\Sigma$ is at least four and $\mathbf{c}\mathcal{A}(\Sigma)$ is free. 
Chose an edge $e$ of $\Sigma$. 
Then the deletion $\Sigma \setminus e$ is switching equivalent to a path consisting of positive edges and hence $\mathbf{c}\mathcal{A}(\Sigma\setminus e)$ is free. 
However, the contraction $\Sigma/e$ is an unbalanced cycle of length at least three and $\mathbf{c}\mathcal{A}(\Sigma/ e)$ is non-free by induction. 
This contradicts to Proposition \ref{restriction theorem} and hence $\mathbf{c}\mathcal{A}(\Sigma)$ is non-free. 

Finally, consider the condition (\ref{signed subgraph condition 3}). 
Then 
\begin{align*}
\chi(\mathcal{A}(\Sigma),t) = t(t-2)(t^{2}-6t+10). 
\end{align*}
By Theorem \ref{factorization theorem}, $\mathbf{c}\mathcal{A}(\Sigma)$ is non-free. 
\end{proof}

Edelman and Reiner \cite[Theorem 4.6]{edelman1994free-mz} characterized freeness of subarrangements between Weyl arrangements of type $\mathrm{A}_{\ell-1}$ and $\mathrm{B}_{\ell}$ in terms of signed graphs. 
From this result, we have the following corollary. 
\begin{corollary}\label{Edelman-Reiner}
Let $\Gamma$ be a signed graph and $\Gamma_{+}$ and $\Gamma_{-}$ denote the simple graphs consisting of positive and negative edges. 
Suppose that $\Gamma_{+}$ is a complete graph. 
Then the following conditions are equivalent. 
\begin{enumerate}[(1)]
\item $\mathcal{B}(\Gamma)$ is free. 
\item $\Gamma_{-}$ is a threshold graph. 
\end{enumerate}
In this context, a simple graph is \textbf{threshold} if the graph does not contain graphs in Figure \ref{Fig:threshold} as induced subgraphs. 
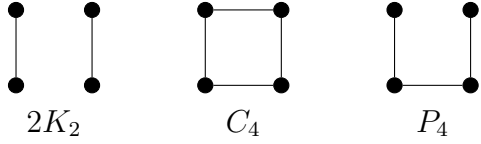
\begin{figure}[t]
\centering
\begin{tikzpicture}
\draw (0,1) node[v](1){};
\draw (0,0) node[v](2){};
\draw (1,0) node[v](3){};
\draw (1,1) node[v](4){};
\draw (0.5,-0.5) node[]{$ 2K_{2} $};
\draw (1)--(2);
\draw (3)--(4);
\end{tikzpicture}
\hspace{10mm}
\begin{tikzpicture}
\draw (0,1) node[v](1){};
\draw (0,0) node[v](2){};
\draw (1,0) node[v](3){};
\draw (1,1) node[v](4){};
\draw (0.5,-0.5) node[]{$ C_{4} $};
\draw (1)--(2)--(3)--(4)--(1);
\end{tikzpicture}
\hspace{10mm}
\begin{tikzpicture}
\draw (0,1) node[v](1){};
\draw (0,0) node[v](2){};
\draw (1,0) node[v](3){};
\draw (1,1) node[v](4){};
\draw (0.5,-0.5) node[]{$ P_{4} $};
\draw (1)--(2)--(3)--(4);
\end{tikzpicture}
\caption{Forbidden induced subgraphs for threshold graphs.}\label{Fig:threshold}
\end{figure} 
\end{corollary}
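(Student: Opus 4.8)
The plan is to combine Theorem \ref{main theorem signed} with the known characterization of freeness for subarrangements between $\mathrm{A}_{\ell-1}$ and $\mathrm{B}_{\ell}$ due to Edelman and Reiner. First I would observe that when $\Gamma_{+}$ is a complete graph, the bias arrangement $\mathcal{B}(\Gamma)$ (with $G_{\Gamma}=\mathbb{F}_{2}$ and $q=-1$) is precisely a subarrangement lying between the Coxeter arrangement of type $\mathrm{A}_{\ell-1}$ and the Coxeter arrangement of type $\mathrm{B}_{\ell}$: the hyperplanes $\{x_{i}=0\}$ together with the positive-edge hyperplanes $\{x_{i}-x_{j}=0\}$ (all of them, since $\Gamma_{+}=K_{\ell}$) form the Weyl arrangement of type $\mathrm{D}_{\ell}$'s complement $\dots$ actually, more carefully, the hyperplanes present are $\{x_{i}=0\}$ for all $i$, $\{x_{i}-x_{j}=0\}$ for all $i<j$, and $\{x_{i}+x_{j}=0\}$ exactly for the negative edges $[i,j,1]\in E_{\Gamma}$. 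This is exactly the setup of \cite[Theorem 4.6]{edelman1994free-mz}, where such an arrangement is encoded by the graph $\Gamma_{-}$ recording which $\{x_{i}+x_{j}=0\}$ are included.

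The key step is then to quote Edelman--Reiner's result directly: such an arrangement is free if and only if $\Gamma_{-}$ is a threshold graph, i.e. contains no induced $2K_{2}$, $C_{4}$, or $P_{4}$. This gives the equivalence of (1) and (2) immediately, without needing to invoke Theorem \ref{main theorem signed} at all for the statement as phrased. (Theorem \ref{main theorem signed} would be relevant if one instead wanted the corresponding statement for $\mathbf{c}\mathcal{A}(\Gamma)$, which is why the corollary is placed here in the text.) I would make explicit the dictionary: switching equivalence of signed graphs corresponds to sign changes $x_{i}\mapsto -x_{i}$, which is exactly the symmetry used in \cite{edelman1994free-mz} to reduce to a canonical form, so the hypothesis ``$\Gamma_{+}$ complete'' is switching-invariant and the statement is well-posed.

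The one point requiring a small argument is to check that the characterization (I)--(III) of Theorem \ref{freeness of signed graphic arrangement} specializes, under the assumption $\Gamma_{+}=K_{\ell}$, to exactly the threshold condition on $\Gamma_{-}$ — this is the internal-consistency check that makes the corollary follow from our own Theorem \ref{main theorem signed} as well as from \cite{edelman1994free-mz}. Concretely: since $\Gamma_{+}$ is complete, $\Gamma$ is automatically balanced chordal and has no induced unbalanced cycle of length $\geq 4$ unless forced by a negative edge configuration, so conditions (I) and (II) of Theorem \ref{freeness of signed graphic arrangement}\ref{freeness of signed graphic arrangement 1} reduce to: no unbalanced triangle with a doubled-sign obstruction, and (III) reduces to forbidding the Figure \ref{Fig:obstruction signed graph} pattern — and one verifies that, modulo switching, the minimal such forbidden configurations are precisely induced $2K_{2}$, $C_{4}$, and $P_{4}$ in $\Gamma_{-}$. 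The main obstacle is this last bookkeeping: translating each forbidden signed-graph pattern into its shadow in $\Gamma_{-}$ and confirming the list matches the three threshold obstructions exactly. I expect this to be a short finite case analysis rather than anything deep, since the forbidden patterns all live on at most four vertices.
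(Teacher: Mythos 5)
Your main line of argument---identifying $\mathcal{B}(\Gamma)$ with $\Gamma_{+}$ complete and $q=-1$ as a subarrangement between the Weyl arrangements of types $\mathrm{A}_{\ell-1}$ and $\mathrm{B}_{\ell}$ containing all coordinate hyperplanes, and then quoting \cite[Theorem 4.6]{edelman1994free-mz} directly---is exactly how the paper obtains this corollary, which it states without further proof. The additional ``consistency check'' against Theorem \ref{freeness of signed graphic arrangement} is not needed and is left vague, but it does not affect the correctness of the argument.
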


\begin{example}\label{Shi over F2}
Let $\Gamma$ be a signed graph and suppose that $\Gamma_{+}$ is complete. 
Then the affinographic arrangement $\mathcal{A}(\Gamma)$ over $\mathbb{F}_{2}$ is defined by 
\begin{align*}
\mathcal{A}(\Gamma) = \Set{\{x_{i}-x_{j}=0\} | 1 \leq i < j \leq \ell} \cup \Set{\{x_{i}-x_{j} = 1\} | \{i,j\} \in E_{\Gamma_{-}}}. 
\end{align*}
Then $\mathcal{A}(\Gamma)$ is an arrangement between the ``Coxeter arrangement" and the ``Shi arrangement" over $\mathbb{F}_{2}$. 
By Theorem \ref{main theorem signed} and Corollary \ref{Edelman-Reiner}, $\mathbf{c}\mathcal{A}(\Gamma)$ is free if and only if $\Gamma_{-}$ is threshold. 
\end{example}

\begin{remark}
Freeness of subarrangements between the Coxeter arrangement and the Shi arrangement of type $\mathrm{A}$ is characterized by Athanasiadis (Theorem \ref{Athanasiadis-Bailey free}), which is different from Example \ref{Shi over F2}. 
See \cite{palezzato2018free} and \cite{Tor22} for behavior of freeness for different characteristic of the base fields. 
\end{remark}

\section{Proof of Theorem \ref{main theorem 3dim}}\label{sec:3dim}

The proof of Theorem \ref{main theorem 3dim} requires the following lemmas. 

\begin{lemma}[{\cite[Corollary 3.3]{yoshinaga2005freeness-botlms}}]\label{lem:Yoshinaga}
Let $\mathcal{A}$ be an arrangement in dimension $3$ and $H \in \mathcal{A}$. 
Then the following conditions are equivalent 
\begin{enumerate}[(1)]
\item $\mathcal{A}$ is free. 
\item $\chi(\mathcal{A},t) = (t-1)(t-d_{1})(t-d_{2})$ and $\exp(\mathcal{A}^{H}, m^{H}) = (d_{1}, d_{2})$, 
\end{enumerate}
where $(\mathcal{A}^{H}, m^{H})$ denotes the \textbf{Ziegler restriction}, that is, 
\begin{align*}
m^{H}(X) \coloneqq \#\Set{K \in \mathcal{A}\setminus\{H\} | K \cap H = X}. 
\end{align*}
\end{lemma}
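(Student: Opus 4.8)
This is Yoshinaga's freeness criterion for $3$-arrangements, so in a self-contained account one simply cites \cite{yoshinaga2005freeness-botlms}; I will sketch how I would reconstruct it. The direction $(1)\Rightarrow(2)$ is the formal half: if $\mathcal{A}$ is free, Terao's Factorization Theorem (Theorem \ref{factorization theorem}) gives $\chi(\mathcal{A},t)=(t-1)(t-d_{1})(t-d_{2})$ with $\exp(\mathcal{A})=(1,d_{1},d_{2})$, and Ziegler's restriction theorem shows the Ziegler restriction $(\mathcal{A}^{H},m^{H})$ is free with exponents $(d_{1},d_{2})$; thus both clauses of $(2)$ hold. Note that no freeness hypothesis is hidden in $(2)$, since a $2$-multiarrangement is automatically free.

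For the substantive direction $(2)\Rightarrow(1)$, the plan is to compare $D(\mathcal{A})$ with the Ziegler restriction module and show the defect is governed exactly by the middle coefficient of $\chi(\mathcal{A},t)$. Fix coordinates with $\alpha_{H}=x_{3}$ and pass to $\mathbb{P}^{2}$: the module $D_{0}(\mathcal{A})$ (that is, $D(\mathcal{A})$ modulo its Euler summand) sheafifies to a rank-$2$ reflexive sheaf $\mathcal{E}$, which is locally free because $\mathbb{P}^{2}$ is smooth of dimension $2$, and $\mathcal{A}$ is free if and only if $\mathcal{E}$ splits as a sum of line bundles. The hypothesis $\ell=3$ enters twice here: it is what makes $\mathcal{E}$ locally free, and it is why every proper flat $X\neq\{0\}$ has $\rank\mathcal{A}_{X}\leq 2$, so $\mathcal{A}_{X}$ is free (Proposition \ref{free localization} together with automatic freeness of rank-$\leq 2$ arrangements), which localizes the possible failure of splitting to the origin. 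Restricting $\mathcal{E}$ to the line $\ell_{H}$ corresponds, via Ziegler's theorem, to the multiarrangement $(\mathcal{A}^{H},m^{H})$ and realizes the generic splitting type of $\mathcal{E}$ as $(d_{1},d_{2})=\exp(\mathcal{A}^{H},m^{H})$. The obstruction to $\mathcal{E}$ splitting is the finite-length module $\bigoplus_{k}H^{1}(\mathbb{P}^{2},\mathcal{E}(k))$, whose length turns out to be $b_{2}(\mathcal{A})-d_{1}d_{2}$, where $b_{2}(\mathcal{A})$ is the middle coefficient in $\chi(\mathcal{A},t)=(t-1)(t^{2}-b_{1}t+b_{2})$ (this is where the second Chern class of $\mathcal{E}$, hence the combinatorics of rank-two flats, shows up). Since $b_{1}=|\mathcal{A}|-1=|m^{H}|=d_{1}+d_{2}$ is forced combinatorially, the hypothesis $\chi(\mathcal{A},t)=(t-1)(t-d_{1})(t-d_{2})$ says exactly $b_{2}=d_{1}d_{2}$, i.e. the obstruction vanishes; hence $\mathcal{E}$ splits and $\mathcal{A}$ is free.

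The hard part will be the penultimate claim — that the length of $\bigoplus_{k}H^{1}(\mathbb{P}^{2},\mathcal{E}(k))$ is precisely $b_{2}(\mathcal{A})-d_{1}d_{2}$ — which is the technical heart of Yoshinaga's argument and rests on his Betti-number characterization of freeness together with the Solomon–Terao formula; in practice I would prove $(1)\Rightarrow(2)$ in full and invoke \cite{yoshinaga2005freeness-botlms} for $(2)\Rightarrow(1)$ rather than redo the cohomology computation on $\mathbb{P}^{2}$. A sheaf-free alternative route is to set up directly the graded exact sequence obtained by restricting logarithmic derivations modulo $\alpha_{H}$ (with the Euler part handled separately), prove that its cokernel has finite length by the same localization argument, conclude by graded Nakayama that $\mathcal{A}$ is free if and only if that cokernel vanishes, and finish with a Hilbert-series computation — but the Hilbert-series computation is the same obstacle in different clothing.
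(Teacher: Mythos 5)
The paper offers no proof of this lemma; it is quoted verbatim from Yoshinaga's work with only the citation, which is exactly the primary move you make. Your reconstruction sketch is a faithful outline of Yoshinaga's actual argument (Ziegler restriction plus Terao factorization for $(1)\Rightarrow(2)$; the locally free rank-$2$ sheaf on $\mathbb{P}^{2}$ and the identity $b_{2}(\mathcal{A})-d_{1}d_{2}=\dim_{\mathbb{K}}\bigoplus_{k}H^{1}(\mathbb{P}^{2},\mathcal{E}(k))$ for $(2)\Rightarrow(1)$), and you correctly isolate the one step that genuinely requires the source, so there is nothing to correct.
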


\begin{lemma}[{\cite[Proposition 1.23]{yoshinaga2014freeness-adlfdsdtm}}, {\cite[Theorem 1.5]{wakamiko2007exponents-tjom}}]\label{lem:2-dim exponents}
Let $\mathcal{A} = \{H_{1}, \dots, H_{n}\}$ be an arrangement in dimension $2$. 
Let $m$ be a multiplicity on $\mathcal{A}$ with $m_{i} \coloneqq m(H_{i})$ and $m_{1} \geq m_{2} \geq \dots \geq m_{n}$. 
Then the following conditions hold. 
\begin{enumerate}[(1)]
\item\label{lem:2-dim exponents n} If $n \geq \frac{|m|}{2}+1$, then $\exp(\mathcal{A},m) = (|m|-n+1, n-1)$. 
\item\label{lem:2-dim exponents wakamiko} If $n=3$ and the characteristic of the base field is $0$, then 
\begin{align*}
\exp(\mathcal{A},m) = \begin{cases}
(m_{2}+m_{3}, m_{1}) & \text{ if } m_{1} \geq m_{2}+m_{3}, \\
(k,k) & \text{ if } m_{1} \leq m_{2}+m_{3} \text{ and } |m| = 2k, \\
(k,k+1) & \text{ if } m_{1} \leq m_{2}+m_{3} \text{ and } |m| = 2k+1. 
\end{cases}
\end{align*}
\end{enumerate}
\end{lemma}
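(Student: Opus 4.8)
The plan is to reduce every case to the computation of the \emph{smaller} exponent. By the quoted result of Ziegler, $(\mathcal{A},m)$ is free of rank $2$ with $D(\mathcal{A},m)\cong S(-d_{1})\oplus S(-d_{2})$ and $d_{1}+d_{2}=|m|$; writing $\exp(\mathcal{A},m)=(d_{1},d_{2})$ with $d_{1}\le d_{2}$, this identifies $d_{1}$ with the least degree of a nonzero homogeneous derivation in $D(\mathcal{A},m)$ and forces $d_{2}=|m|-d_{1}$. Hence it suffices in each case to compute this minimal degree. Throughout I write a homogeneous degree-$d$ derivation as $\theta=f\partial_{x}+g\partial_{y}$ with $f,g\in S_{d}$, recall that $\theta\in D(\mathcal{A},m)$ iff $\alpha_{i}^{m_{i}}\mid\theta(\alpha_{i})$ for every line $\alpha_{i}=a_{i}x+b_{i}y$, and use as the basic device the auxiliary form $\Delta:=xg-yf$, the determinant of $\theta$ against the Euler derivation $\theta_{E}=x\partial_{x}+y\partial_{y}$, which has $\deg\Delta=d+1$ when $\Delta\ne 0$.

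For part (1), the upper bound is immediate: $\theta_{0}:=\left(\prod_{i}\alpha_{i}^{m_{i}-1}\right)\theta_{E}$ lies in $D(\mathcal{A},m)$ since $\theta_{0}(\alpha_{i})=\left(\prod_{j}\alpha_{j}^{m_{j}-1}\right)\alpha_{i}$ is divisible by $\alpha_{i}^{m_{i}}$, and it has degree $|m|-n+1$, so $d_{1}\le|m|-n+1$. For the matching lower bound I would show every nonzero $\theta\in D(\mathcal{A},m)$ has degree at least $|m|-n+1$, splitting on $\Delta$. If $\Delta=0$ then $xg=yf$ forces $\theta=h\theta_{E}$, and $\alpha_{i}^{m_{i}}\mid h\alpha_{i}$ gives $\prod_{i}\alpha_{i}^{m_{i}-1}\mid h$, so $\deg\theta\ge|m|-n+1$. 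If $\Delta\ne 0$, the identities $b_{i}\Delta=x\theta(\alpha_{i})-\alpha_{i}f$ and $a_{i}\Delta=\alpha_{i}g-y\theta(\alpha_{i})$ show that both $a_{i}\Delta$ and $b_{i}\Delta$ vanish on $\alpha_{i}=0$ (using $m_{i}\ge 1$), hence $\alpha_{i}\mid\Delta$ for each $i$; as the $\alpha_{i}$ are pairwise coprime, $\prod_{i}\alpha_{i}\mid\Delta$ and $\deg\theta=\deg\Delta-1\ge n-1$. The hypothesis $n\ge|m|/2+1$ is exactly the inequality $n-1\ge|m|-n+1$, so both branches give $\deg\theta\ge|m|-n+1$. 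Thus $d_{1}=|m|-n+1$ and $d_{2}=n-1$; this argument is characteristic-free, as the statement of (1) requires.

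For part (2), after a linear change of coordinates I may assume the three lines are $x$, $y$, $x-y$ with multiplicities $m_{1},m_{2},m_{3}$, since any three distinct lines in $\mathbb{K}^{2}$ are projectively equivalent to these; the derivation condition becomes $x^{m_{1}}\mid f$, $y^{m_{2}}\mid g$, and $(x-y)^{m_{3}}\mid f-g$. In the dominant case $m_{1}\ge m_{2}+m_{3}$ the computation closes at once: $\theta_{B}:=y^{m_{2}}(x-y)^{m_{3}}\partial_{y}$ is a derivation of degree $m_{2}+m_{3}$, giving $d_{1}\le m_{2}+m_{3}$, while any $\theta$ of degree $d<m_{2}+m_{3}\le m_{1}$ must have $f=0$ (as $x^{m_{1}}\mid f$ with $\deg f<m_{1}$), and then $y^{m_{2}}(x-y)^{m_{3}}\mid g$ forces $g=0$; hence $d_{1}=m_{2}+m_{3}$ and $d_{2}=m_{1}$.

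In the balanced case $m_{1}\le m_{2}+m_{3}$ the exponents should be $(\lfloor|m|/2\rfloor,\lceil|m|/2\rceil)$. Writing $f=x^{m_{1}}p$ and $g=y^{m_{2}}q$, a degree-$d$ derivation with $d\ge m_{1}$ is a pair $(p,q)$ in the kernel of the linear map $\Phi\colon(p,q)\mapsto x^{m_{1}}p-y^{m_{2}}q \bmod (x-y)^{m_{3}}$, whose domain has dimension $2d-m_{1}-m_{2}+2$ and whose target has dimension $m_{3}$. A dimension count produces a nonzero kernel element as soon as $2d-m_{1}-m_{2}+2>m_{3}$, i.e.\ at $d=\lfloor|m|/2\rfloor$ (balancedness makes $\lfloor|m|/2\rfloor\ge m_{1}$, so the parametrization is valid), giving $d_{1}\le\lfloor|m|/2\rfloor$. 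The main obstacle is the matching lower bound, which amounts to proving $\Phi$ is injective for every $d\le\lfloor|m|/2\rfloor-1$ (the range $d<m_{1}$ again forcing $\theta=0$ directly). This is a full-rank statement for an explicit confluent-Vandermonde-type matrix obtained by expanding $x^{m_{1}}p-y^{m_{2}}q$ in its Taylor jet along $x=y$, and it is precisely here that characteristic $0$ is indispensable: when $m_{1}=m_{2}=m_{3}=p$ in characteristic $p$ one has $(x-y)^{p}=x^{p}-y^{p}$, so $x^{p}\partial_{x}+y^{p}\partial_{y}$ is a derivation of degree $p<\lfloor 3p/2\rfloor$ and the formula collapses. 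I expect the cleanest route to be to verify that the truncated binomial coefficients governing $\Phi$ cannot all degenerate in characteristic $0$; once injectivity is established one concludes $d_{1}=\lfloor|m|/2\rfloor$ and $d_{2}=\lceil|m|/2\rceil$, which is the content of the two remaining cases.
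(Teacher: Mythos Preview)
The paper does not prove this lemma; it is quoted from Yoshinaga's survey and Wakamiko's paper, so there is no in-paper argument to compare against. Your write-up for part~(1) and for the dominant case $m_{1}\ge m_{2}+m_{3}$ of part~(2) is correct and essentially the standard argument: the Euler-type derivation $(\prod_{i}\alpha_{i}^{m_{i}-1})\theta_{E}$ and the determinant $\Delta=xg-yf$ handle part~(1), and the explicit $\theta_{B}=y^{m_{2}}(x-y)^{m_{3}}\partial_{y}$ together with the degree bound $d<m_{1}$ forcing $f=0$ dispatch the dominant subcase of part~(2).

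The genuine gap is the balanced subcase $m_{1}\le m_{2}+m_{3}$. You set up the linear map $\Phi$ correctly and your dimension count gives the upper bound $d_{1}\le\lfloor|m|/2\rfloor$, but you explicitly leave the lower bound---injectivity of $\Phi$ for $m_{1}\le d\le\lfloor|m|/2\rfloor-1$---as an expectation rather than a proof. This injectivity is precisely the substance of Wakamiko's theorem and is where characteristic~$0$ enters, as your example $m_{1}=m_{2}=m_{3}=p$ in characteristic~$p$ already shows the matrix can degenerate. A complete proof requires an explicit nonvanishing computation: one route (Wakamiko's) is to exhibit two concrete derivations of the correct degrees and verify via Saito's criterion that their determinant equals the defining polynomial $x^{m_{1}}y^{m_{2}}(x-y)^{m_{3}}$ up to a nonzero scalar; another is to compute the determinant of the confluent matrix you allude to and check it is a nonzero rational multiple of a product of binomial coefficients. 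Either way, the step you flag as ``the main obstacle'' is not a formality and cannot be waved through by a dimension count alone.
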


\begin{lemma}\label{lem:exponents}
Suppose that $q \in \mathbb{C}^{\times}$ is a transcendental number or a positive real number other than $1$ and let $(\mathcal{A}, m)$ be a multiarrangement in $\mathbb{C}^{2}$ determined by 
\begin{align*}
x_{1}^{s+1}x_{2}^{t+1}\prod_{g \in \Lambda}(x_{1}-q^{g}x_{2}), 
\end{align*}
where $\Lambda$ is a finite subset of $\mathbb{Z}$ and $0 \leq s \leq t \leq u \coloneqq |\Lambda|$. 
Then 
\begin{align*}
\exp(\mathcal{A},m) = \begin{cases}
(s+t+1, u+1) & \text{ if } u \geq s+t, \\
(k+1,k+1) & \text{ if } u \leq s+t \text{ and } s+t+u = 2k, \\
(k+1,k+2) & \text{ if } u \leq s+t \text{ and } s+t+u = 2k+1. 
\end{cases}
\end{align*}
\end{lemma}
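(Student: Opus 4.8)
The plan is to compute $\exp(\mathcal{A},m)$ by reducing to the two cases in Lemma~\ref{lem:2-dim exponents}. The arrangement $\mathcal{A}$ consists of $n = u+2$ lines through the origin in $\mathbb{C}^{2}$: the two coordinate lines $\{x_{1}=0\}$, $\{x_{2}=0\}$, and the $u$ lines $\{x_{1}-q^{g}x_{2}=0\}$ for $g \in \Lambda$, which are pairwise distinct precisely because $q$ is not a root of unity (transcendental or positive real $\neq 1$). The multiplicity is $m = (s+1, t+1, 1, \dots, 1)$ with $|m| = s+t+u+2$. The first step is to dispatch the case $u \geq s+t$: then $n = u+2 \geq (s+t+u+2)/2 + 1 = |m|/2+1$, so part~(\ref{lem:2-dim exponents n}) of Lemma~\ref{lem:2-dim exponents} applies and gives $\exp(\mathcal{A},m) = (|m|-n+1, n-1) = (s+t+1, u+1)$, matching the claimed first branch.

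The remaining case is $u \leq s+t$. Here I would first observe that if $u=0$ the arrangement has only two lines and the exponents are $(s+1, t+1)$; since $u \le s+t$ forces... actually when $u = 0$ we need $s+t+0 = s+t$, and the formula should read $(k+1,k+1)$ or $(k+1,k+2)$ — one checks directly that $(s+1,t+1)$ agrees with these since $s \le t$. For $u \geq 1$, the key idea is to reduce the number of lines to three by repeatedly applying an addition–deletion argument for multiarrangements, or more directly by invoking part~(\ref{lem:2-dim exponents wakamiko}): group the $u$ lines $\{x_{1}-q^{g}x_{2}=0\}$ together. The natural move is to merge them with one of the coordinate lines so that we are left with a $3$-line configuration whose multiplicities are comparable to $s, t, u$. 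Concretely, consider the $3$-arrangement $\{x_{1}=0, x_{2}=0, x_{1}-q^{g_0}x_{2}=0\}$ for some fixed $g_0 \in \Lambda$ with multiplicity $(s+1, t+1, u)$ — here we have folded the other $u-1$ lines into the single line $\{x_{1}-q^{g_0}x_{2}=0\}$ — and argue that $(\mathcal{A},m)$ has the same exponents as this $3$-arrangement. The justification is that a generic choice of the $q^{g}$ (which is what transcendence or the positive-real hypothesis secures) makes the collision of the extra lines onto one line the "worst case" in the sense relevant to the balanced/unbalanced dichotomy of Lemma~\ref{lem:2-dim exponents}(\ref{lem:2-dim exponents wakamiko}).

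Applying Lemma~\ref{lem:2-dim exponents}(\ref{lem:2-dim exponents wakamiko}) to multiplicities $m_{1} \geq m_{2} \geq m_{3}$ drawn from $\{s+1, t+1, u\}$: since $u \leq s+t \leq (s+1)+(t+1)$ and also $t+1 \leq (s+1)+u$ need not be automatic, one must check which of the three is largest. In the regime $u \leq s+t$ with $s \leq t \leq u$, the largest is $u$ (as $u \ge t \ge s+1-1$... careful: $u \ge t$ but $t+1$ could exceed $u$ when $u = t$); I would split into the subcase $t+1 \leq s+1+u$, i.e. $t \leq s+u$, which holds whenever $u \geq 1$ and $s \le t \le u$ fails only if... in fact $t \le u$ gives $t \le s+u$, so the "largest minus the other two" quantity is nonpositive and we land in the $(k,k)$ or $(k,k+1)$ branch with $2k$ or $2k+1 = |m| = (s+1)+(t+1)+u$, i.e. $s+t+u+2$, giving $\exp = (k+1,k+1)$ or $(k+1,k+2)$ where $s+t+u = 2k$ or $2k+1$ respectively — exactly the claimed second and third branches. \textbf{The main obstacle} is the reduction in the previous paragraph: making rigorous the claim that collapsing $u$ generic lines into a single line of multiplicity $u$ preserves the exponents. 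This likely needs an explicit construction of a basis of $D(\mathcal{A},m)$, or an inductive addition–deletion along the lines $\{x_1 - q^g x_2 = 0\}$ using that at each step the restriction exponents fit, with the transcendence of $q$ ensuring the relevant $2\times 2$ or $3\times 3$ coefficient determinants (Vandermonde-type in the $q^{g}$) are nonzero so that the logarithmic derivations one writes down are genuinely independent.
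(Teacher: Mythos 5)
Your first case ($u \geq s+t$) is correct and is exactly the paper's argument: count $n = u+2$, check $n \geq |m|/2+1$, and apply part~(1) of Lemma~\ref{lem:2-dim exponents}. The problem is the second case, where the step you yourself flag as ``the main obstacle'' is in fact the entire content of the lemma, and the reduction you propose does not close it.

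Collapsing the $u$ lines $\{x_1 - q^g x_2 = 0\}$ onto a single line of multiplicity $u$ changes the underlying arrangement, and there is no general principle that a multiarrangement in generic position has the same exponents as such a degeneration. Exponents of $2$-multiarrangements are only upper semicontinuous in the larger one, and the family in which $u$ simple lines collide into one line of multiplicity $u$ is not a deformation of a fixed multiarrangement, so even semicontinuity is not directly available. More tellingly, your argument never uses the hypothesis that $q$ is transcendental or a positive real other than $1$ beyond ensuring the lines are distinct --- but distinctness alone is not enough. The exponents genuinely depend on arithmetic relations among the $q^{g}$, and this is where the hypothesis does its work. The paper's proof makes this explicit: writing a putative derivation $\theta = x_1^{s+1} f \partial_1 + x_2^{t+1} g \partial_2$ of degree $d \leq k$, the conditions $\theta(x_1 - q^{g_i}x_2) \equiv 0 \pmod{x_1 - q^{g_i}x_2}$ become a homogeneous linear system in the coefficients of $f$ and $g$ whose matrix is a generalized Vandermonde matrix in the $q^{g_i}$; its determinant factors as $s_{\lambda}(q^{g_1},\dots) \cdot \Delta(q^{g_1},\dots)$, and the Schur polynomial $s_{\lambda}$ is a positive sum of monomials, hence nonzero exactly when $q$ is positive real or transcendental. (The inequality $u \leq s+t$ and $d \leq k$ are what guarantee the exponent sequence $\lambda$ is a genuine partition and that the system has at least as many equations as unknowns.) This forces $\theta = 0$, so $d_1 \geq k+1$, and $d_1 + d_2 = |m| = 2k+2$ (resp.\ $2k+3$) pins down the exponents. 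Without an argument of this kind --- or an explicit basis construction, which you mention but do not carry out --- the second and third branches of the lemma remain unproved. I would also note that part~(2) of Lemma~\ref{lem:2-dim exponents}, which you invoke for the collapsed configuration, is stated for exactly three lines and cannot be applied to $(\mathcal{A},m)$ itself; it only tells you what the answer \emph{should} be if your reduction were valid.
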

\begin{proof}
First, suppose that $u \geq s+t$. 
In order to use Lemma \ref{lem:2-dim exponents}\ref{lem:2-dim exponents n}, let $n \coloneqq |\mathcal{A}| = u+2$. 
Then
\begin{align*}
n-\dfrac{|m|}{2}-1 = u+2-\dfrac{s+t+u+2}{2}-1
= \dfrac{u-s-t}{2} \geq 0. 
\end{align*}
Therefore $\exp(\mathcal{A},m) = (|m|-n+1,n-1) = (s+t+1,u+1)$. 

Next, suppose that $u \leq s+t$ and $s+t+u = 2k$. 
Let $\exp(\mathcal{A},m) = (d_{1},d_{2})$ and $d_{1} \leq d_{2}$. 
It satisfies to show that $d_{1} \geq k+1$ since $d_{1} + d_{2} = |m| = s+t+u+2 = 2(k+1) \leq 2d_{1}$ implies $d_{1} = d_{2} = k+1$. 
Assume that $\theta \in D(\mathcal{A},m)$ is a homogeneous element and $d \coloneqq \deg \theta \leq k$. 
We will show that $\theta = 0$. 

Since $\theta(x_{1}) \in x_{1}^{s+1}S$ and $\theta(x_{2}) \in x_{2}^{t+1}S$, there exist polynomials $f,g \in S$ such that 
\begin{align*}
\theta = x_{1}^{s+1}f\partial_{1} + x_{2}^{t+1}g\partial_{2}, 
\end{align*}
where $\partial_{1}$ denotes the derivation $\frac{d}{dx_{i}}$. 
Write $f$ and $g$ as 
\begin{align*}
f = \sum_{i=0}^{d-s-1}b_{i}x_{1}^{d-s-1-i}x_{2}^{i} 
\quad \text{ and } \quad 
g = \sum_{i=0}^{d-t-1}c_{i}x_{1}^{d-t-1-i}x_{2}^{i}. 
\end{align*}

Let $\Lambda = \{g_{1}, \dots, g_{u}\}$. 
To ease notation write $q_{i} \coloneqq q^{g_{i}}$. 
Then in modulus $x_{1}-q_{i}x_{2}$
\begin{align*}
0 &\equiv \theta(x_{1}-q_{i}x_{2}) \\
&= x_{1}^{s+1}f - q_{i}x_{2}^{t+1}g \\
&\equiv x_{2}^{d}\left( b_{0}q_{i}^{d}+b_{1}q_{i}^{d-1} + \dots + b_{d-s-1}q_{i}^{s+1} -c_{0}q_{i}^{d-t} - c_{1}q_{i}^{d-t-1} - \dots - c_{d-t-1}q_{i} \right). 
\end{align*}
Note that the powers in the last expression distinct since 
\begin{align*}
(s+1)-(d-t) 
&= s+1-d+t \\
&\geq s+1-k+t \\
&= \dfrac{1}{2}(2s+2-s-t-u+2t) \\
&= \dfrac{1}{2}(s+t-u+2) > 0. 
\end{align*}
Therefore 
\begin{align*}
\lambda \coloneqq (d, d-1, \dots, s+1, d-t, d-t-1, \dots, 1)
\end{align*}
is decreasing and defines an integer partition. 
The length $\ell(\lambda)$ satisfies 
\begin{align*}
\ell(\lambda) = (d-s) + (d-t) = 2d-s-t
\leq 2k-s-t = u. 
\end{align*}
Thus we have the following linear equation. 
\begin{align*}
\begin{pmatrix}
q_{1}^{d} & \dots & q_{1}^{s+1} & q_{1}^{d-t} & \dots & q_{1} \\
q_{2}^{d} & \dots & q_{2}^{s+1} & q_{2}^{d-t} & \dots & q_{2} \\
\vdots & & \vdots & \vdots & & \vdots \\
q_{\ell(\lambda)}^{d} & \dots & q_{\ell(\lambda)}^{s+1} & q_{\ell(\lambda)}^{d-t} & \dots & q_{\ell(\lambda)}
\end{pmatrix}\begin{pmatrix}
b_{0} \\ \vdots \\ b_{d-s-1} \\ -c_{0} \\ \vdots \\ -c_{d-t-1}
\end{pmatrix} = 0. 
\end{align*}
Let $a_{\lambda}(q_{1}, \dots, q_{\ell(\lambda)})$ denote the determinant of the coefficient matrix. 
By the definition of the Schur polynomial $s_{\lambda}$, we have 
\begin{align*}
a_{\lambda}(q_{1}, \dots, q_{\ell(\lambda)}) = s_{\lambda}(q_{1}, \dots, q_{\ell(\lambda)}) \ \Delta(q_{1}, \dots, q_{\ell(\lambda)}), 
\end{align*}
where $\Delta$ denotes the Vandermonde determinant. 
Since $q \in \mathbb{C}^{\times}$ is not a root of unity, $q_{1}, \dots, q_{\ell(\lambda)}$ are distinct and hence $\Delta(q_{1}, \dots, q_{\ell(\lambda)}) \neq 0$. 
It is well known that the Schur polynomial is a positive linear combination of monomials. 
Therefore $s_{\lambda}(q_{1}, \dots, q_{\ell(\lambda)}) \neq 0$ if $q$ is a positive real number or a transcendental number. 
In this case we have $a_{\lambda}(q_{1}, \dots, q_{\ell(\lambda)}) \neq 0$, which implies $\theta = 0$. 

The remaining case is similar. 
We leave the proof for the reader. 
\end{proof}

Now, we are ready to prove Theorem \ref{main theorem 3dim}. 

\begin{theorem}[Restatement of Theorem \ref{main theorem 3dim}]
Let $\Gamma$ be a simple gain graph on $3$ vertices with gain group $G = \mathbb{Z}$. 
Suppose that $q$ is a transcendental number or a positive real number other than $1$. 
Then $\mathrm{c}\mathcal{A}(\Gamma)$ is free if and only if $\mathcal{B}(\Gamma)$ is free. 
\end{theorem}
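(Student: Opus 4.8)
The strategy is to reduce both sides to a finite case analysis over the simple gain graphs on three vertices with gain group $\mathbb{Z}$, using the ``switching'' normalization available for gain graphs (adding a constant to all gains along edges incident to a vertex) to bring $\Gamma$ into a manageable normal form. After switching, the edges between any fixed pair of vertices may be assumed to have gains forming a finite subset of $\mathbb{Z}$ normalized so that, say, the smallest gain on one distinguished pair is $0$. The key point is that $\mathrm{c}\mathcal{A}(\Gamma)$ lives in $\mathbb{C}^4$ (dimension $3$ after coning over a $3$-dimensional arrangement is $4$, but the relevant test is Lemma~\ref{lem:Yoshinaga} applied in dimension $3$ to the essential part), so I would apply Lemma~\ref{lem:Yoshinaga}: $\mathrm{c}\mathcal{A}(\Gamma)$ is free if and only if $\chi(\mathrm{c}\mathcal{A}(\Gamma),t)=(t-1)(t-d_1)(t-d_2)$ and the Ziegler restriction multiarrangement has exponents $(d_1,d_2)$. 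Similarly $\mathcal{B}(\Gamma)$ sits in $\mathbb{C}^3$, and freeness there is again governed by Lemma~\ref{lem:Yoshinaga} with a suitable choice of $H$ (e.g.\ $H=\{x_1=0\}$).

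\textbf{Key steps.} First I would record, via Lemma~\ref{lem:equalbetweencharpoly}, that $\chi(\mathrm{c}\mathcal{A}(\Gamma),t)=(t-1)\chi(\mathcal{B}(\Gamma),t+1)$, so that the ``characteristic polynomial factors'' halves of the two freeness criteria are equivalent automatically: if $\chi(\mathcal{B}(\Gamma),t)=(t-1)(t-e_1)(t-e_2)$ then $\chi(\mathrm{c}\mathcal{A}(\Gamma),t)=(t-1)(t-1)(t-(e_1-1))(t-(e_2-1))$, and conversely. So the entire content is to show that the Ziegler restriction of $\mathrm{c}\mathcal{A}(\Gamma)$ has the predicted exponents $(d_1,d_2)$ if and only if the Ziegler restriction of $\mathcal{B}(\Gamma)$ does. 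The second step is to identify both Ziegler restrictions concretely. Choosing the hyperplane at infinity $\{z=0\}$ for $\mathrm{c}\mathcal{A}(\Gamma)$, the Ziegler restriction is a multiarrangement in $\mathbb{C}^3$ whose underlying simple arrangement is $\mathrm{Cox}(3)$-type with multiplicities counting parallel affine hyperplanes; choosing $\{x_3=0\}$ (or $\{x_1=0\}$) for $\mathcal{B}(\Gamma)$, the Ziegler restriction is a multiarrangement in $\mathbb{C}^2$ of the form treated in Lemma~\ref{lem:exponents}, namely $x_1^{s+1}x_2^{t+1}\prod_{g\in\Lambda}(x_1-q^gx_2)$. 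The third step is the bookkeeping: translate the edge-multiset data of $\Gamma$ on its three vertices into the parameters $(s,t,u)$ and $\Lambda$ on the $\mathcal{B}$ side and into the parallel-class multiplicities on the $\mathrm{c}\mathcal{A}$ side, and check that Lemma~\ref{lem:2-dim exponents}\ref{lem:2-dim exponents wakamiko} applied to the $\mathrm{c}\mathcal{A}$-side Ziegler restriction yields exactly the same trichotomy ($u\ge s+t$ versus the two parity cases) as Lemma~\ref{lem:exponents} gives on the $\mathcal{B}$-side. Since by Lemma~\ref{lem:equalbetweencharpoly} the predicted exponents on the two sides differ by a shift of $1$ and the Ziegler restrictions have exponents matching these predictions in exactly the same combinatorial range, Lemma~\ref{lem:Yoshinaga} closes the argument in both directions.

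\textbf{Main obstacle.} The delicate part is the $\mathcal{B}(\Gamma)$ side when the Ziegler restriction is a $3$-line multiarrangement in $\mathbb{C}^2$ with the hyperplane $\{x_1=0\}$, $\{x_2=0\}$, and a ``diagonal'' $\{x_1-q^gx_2=0\}$ contributing with positive multiplicity, because there the formula of Wakamiko (Lemma~\ref{lem:2-dim exponents}\ref{lem:2-dim exponents wakamiko}) assumes characteristic $0$ and a \emph{generic} triple of lines, whereas here the lines are the three distinguished coordinate-type lines and one must verify that no coincidence spoils the exponent computation; this is precisely what Lemma~\ref{lem:exponents} is engineered to handle, via the nonvanishing of Schur polynomials at positive real or transcendental $q$. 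So the real work is to confirm that, after switching and relabeling, every three-vertex gain graph's $\mathcal{B}$-Ziegler restriction falls into the exact shape $x_1^{s+1}x_2^{t+1}\prod_{g\in\Lambda}(x_1-q^gx_2)$ covered by Lemma~\ref{lem:exponents}, and symmetrically that the $\mathrm{c}\mathcal{A}$-Ziegler restriction is the corresponding unshifted multiarrangement on $\mathrm{Cox}(3)$ so that Lemma~\ref{lem:2-dim exponents} applies with matching output. Assuming those identifications, the equivalence of freeness follows formally from the shifted equality of characteristic polynomials together with the shifted equality of Ziegler exponents and Lemma~\ref{lem:Yoshinaga}.
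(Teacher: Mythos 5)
Your proposal follows essentially the same route as the paper: apply Yoshinaga's criterion (Lemma~\ref{lem:Yoshinaga}) to both arrangements, use Lemma~\ref{lem:equalbetweencharpoly} to match the characteristic-polynomial halves up to a shift, and compare the Ziegler restrictions $(x_1-x_2)^{m_1}(x_2-x_3)^{m_2}(x_1-x_3)^{m_3}$ (handled by Lemma~\ref{lem:2-dim exponents}) and $x_1^{m_3+1}x_2^{m_2+1}\prod_{g\in\Lambda_1}(x_1-q^gx_2)$ (handled by Lemma~\ref{lem:exponents}) to see that the exponent trichotomies agree after the shift by one. The switching normalization you invoke is unnecessary but harmless, and your passing description of the $\mathcal{B}$-side Ziegler restriction as a three-line multiarrangement is imprecise (it has $|\Lambda_1|+2$ lines in general), but you correctly delegate that case to Lemma~\ref{lem:exponents}, so the argument is sound.
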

\begin{proof}
Define subsets $\Lambda_{1}, \Lambda_{2}$, and $\Lambda_{3}$ by 
\begin{align*}
\Lambda_{1} &\coloneqq \Set{g \in \mathbb{Z} | [1,2,g] \in E_{\Gamma}}, \\
\Lambda_{2} &\coloneqq \Set{g \in \mathbb{Z} | [2,3,g] \in E_{\Gamma}}, \\
\Lambda_{3} &\coloneqq \Set{g \in \mathbb{Z} | [1,3,g] \in E_{\Gamma}}. 
\end{align*}
Let $m_{i} = |\Lambda_{i}|$ for each $i \in \{1,2,3\}$. 
We may assume that $m_{1} \geq m_{2} \geq m_{3}$. 
The defining polynomials of $\mathrm{c}\mathcal{A}(\Gamma)$ and $\mathcal{B}(\Gamma)$ are 
\begin{align*}
Q(\mathrm{c}\mathcal{A}(\Gamma)) &= z\prod_{g \in \Lambda_{1}}(x_{1}-x_{2}-gz)\prod_{g \in \Lambda_{2}}(x_{2}-x_{3}-gz)\prod_{g \in \Lambda_{3}}(x_{1}-x_{3}-gz),  \\
Q(\mathcal{B}(\Gamma)) &= x_{1}x_{2}x_{3}\prod_{g \in \Lambda_{1}}(x_{1}-q^{g}x_{2})\prod_{g \in \Lambda_{2}}(x_{2}-q^{g}x_{3})\prod_{g \in \Lambda_{3}}(x_{1}-q^{g}x_{3}). 
\end{align*}
Taking the Ziegler restrictions of $\mathrm{c}\mathcal{A}(\Gamma)$ and $\mathcal{B}(\Gamma)$ with ${z=0}$ and $\{x_{3}=0\}$ respectively yields 
\begin{align*}
Q(\mathrm{c}\mathcal{A}(\Gamma)^{\{z=0\}}, m_{\mathcal{A}}) &= (x_{1}-x_{2})^{m_{1}}(x_{2}-x_{3})^{m_{2}}(x_{1}-x_{3})^{m_{3}}, \\
Q(\mathcal{B}(\Gamma)^{\{x_{3}=0\}}, m_{\mathcal{B}}) &= 
x_{1}^{m_{3}+1}x_{2}^{m_{2}+1}\prod_{g \in \Lambda_{1}}(x_{1}-q^{g}x_{2}). 
\end{align*}

By Lemma \ref{lem:equalbetweencharpoly}, \ref{lem:Yoshinaga}, \ref{lem:2-dim exponents}, and \ref{lem:exponents}, 
\begin{align*}
& \hspace{10.5mm} \mathrm{c}\mathcal{A}(\Gamma) \text{ is free.} \\
&\Longleftrightarrow \chi(\mathrm{c}\mathcal{A}(\Gamma), t) = t(t-1)(t-d_{1})(t-d_{2}) \\
& \hspace{60mm} \text{ and } \exp(\mathrm{c}\mathcal{A}(\Gamma)^{\{z=0\}}, m_{\mathcal{A}}) = (d_{1},d_{2}). \\
&\Longleftrightarrow \chi(\mathcal{B}(\Gamma),t) = (t-1)(t-d_{1}-1)(t-d_{2}-1) \\
& \hspace{60mm} \text{ and } \exp(\mathcal{B}(\Gamma)^{\{x_{3}=0\}}, m_{\mathcal{B}}) = (d_{1}+1, d_{2}+1). \\
&\Longleftrightarrow \mathcal{B}(\Gamma) \text{ is free.} 
\end{align*}
\end{proof}

\section*{Acknowledgment}
This work was supported by JSPS KAKENHI Grant Number JP	21K13777.

\bibliographystyle{amsalpha}
\bibliography{bibfile}

\providecommand{\bysame}{\leavevmode\hbox to3em{\hrulefill}\thinspace}
\providecommand{\MR}{\relax\ifhmode\unskip\space\fi MR }
\providecommand{\MRhref}[2]{%
  \href{http://www.ams.org/mathscinet-getitem?mr=#1}{#2}
}
\providecommand{\href}[2]{#2}
\begin{thebibliography}{ATW07}

\bibitem[Abe12]{abe2012conjecture-mrl}
T.~Abe, \emph{On a conjecture of {Athanasiadis} related to freeness of a family
  of hyperplane arrangements}, Mathematical Research Letters \textbf{19}
  (2012), no.~2, 469--474.

\bibitem[Abe16]{abe2016divisionally-im}
\bysame, \emph{Divisionally free arrangements of hyperplanes}, Inventiones
  mathematicae \textbf{204} (2016), no.~1, 317--346.

\bibitem[ANN09]{abe2009signed-eliminable-jotlms}
T.~Abe, K.~Nuida, and Y.~Numata, \emph{Signed-eliminable graphs and free
  multiplicities on the braid arrangement}, Journal of the London Mathematical
  Society \textbf{80} (2009), no.~1, 121--134.

\bibitem[Ath98]{athanasiadis1998free-ejoc}
C.~A. Athanasiadis, \emph{On free deformations of the braid arrangement},
  European Journal of Combinatorics \textbf{19} (1998), no.~1, 7--18.

\bibitem[Ath00]{athanasiadis2000deformations-asipm}
\bysame, \emph{Deformations of {Coxeter} hyperplane arrangements and their
  characteristic polynomials}, Advanced {Studies} in {Pure} {Mathematics}
  (Tokyo Metropolitan University, Tokyo, Japan), vol.~27, 2000, pp.~1--26.

\bibitem[ATW07]{abe2007characteristic-aim}
T.~Abe, H.~Terao, and M.~Wakefield, \emph{The characteristic polynomial of a
  multiarrangement}, Advances in Mathematics \textbf{215} (2007), no.~2,
  825--838.

\bibitem[Bai97]{bailey1997tilings}
G.~D. Bailey, \emph{Tilings of zonotopes: {Discriminantal} arrangements,
  oriented matroids, and enumeration}, Ph.D. thesis, University of Minnesota,
  1997.

\bibitem[DMS23]{deshpande2023refinements-joac}
P.~Deshpande, K.~Menon, and W.~Sarkar, \emph{Refinements of the braid
  arrangement and two-parameter {Fuss}{\textendash}{Catalan} numbers}, Journal
  of Algebraic Combinatorics \textbf{57} (2023), no.~3, 687--707.

\bibitem[ER94]{edelman1994free-mz}
P.~H. Edelman and V.~Reiner, \emph{Free hyperplane arrangements between
  {${A}_{n-1}$} and {${B}_{n}$}}, Mathematische Zeitschrift \textbf{215}
  (1994), no.~1, 347--365.

\bibitem[ER95]{edelman1995not-botams}
\bysame, \emph{Not all free arrangements are {${K}(\pi,1)$}}, Bulletin of the
  American Mathematical Society \textbf{32} (1995), no.~1, 61--65.

\bibitem[ER96]{edelman1996free-dcg}
\bysame, \emph{Free arrangements and rhombic tilings}, Discrete \&
  Computational Geometry \textbf{15} (1996), no.~3, 307--340.

\bibitem[JT84]{jambu1984free-aim}
M.~Jambu and H.~Terao, \emph{Free arrangements of hyperplanes and supersolvable
  lattices}, Advances in Mathematics \textbf{52} (1984), no.~3, 248--258.

\bibitem[NT23]{nakashima2023freeness-dm}
N.~Nakashima and S.~Tsujie, \emph{Freeness for restriction arrangements of the
  extended {Shi} and {Catalan} arrangements}, Discrete Mathematics \textbf{346}
  (2023), no.~12, 113601.

\bibitem[OT92]{orlik1992arrangements}
P.~Orlik and H.~Terao, \emph{Arrangements of {Hyperplanes}}, Grundlehren der
  mathematischen {Wissenschaften}, vol. 300, Springer Berlin Heidelberg,
  Berlin, Heidelberg, 1992.

\bibitem[PT20]{palezzato2018free}
E.~Palezzato and M.~Torielli, \emph{Free hyperplane arrangements over arbitrary
  fields}, {J}ournal of {A}lgebraic {C}ombinatorics \textbf{52} (2020), no.~2,
  237--249.

\bibitem[Sta72]{stanley1972supersolvable-au}
R.~P. Stanley, \emph{Supersolvable lattices}, Algebra Universalis \textbf{2}
  (1972), no.~1, 197--217.

\bibitem[Sta07]{stanley2007introduction-gc}
\bysame, \emph{An introduction to hyperplane arrangements}, Geometric
  combinatorics (E.~Miller, V.~Reiner, and B.~Sturmfels, eds.), {IAS}/{Park}
  {City} mathematics series, vol.~13, American mathematical society, Providence
  (R.I.), 2007, p.~389 496.

\bibitem[STT19]{suyama2019signed}
D.~Suyama, M.~Torielli, and S.~Tsujie, \emph{Signed graphs and the freeness of
  the {W}eyl subarrangements of type {$B_l$}}, Discrete Mathematics
  \textbf{342} (2019), no.~1, 233--249.

\bibitem[SY21]{suyama2021primitive-siagmaa}
D.~Suyama and M.~Yoshinaga, \emph{The {Primitive} {Derivation} and {Discrete}
  {Integrals}}, Symmetry, Integrability and Geometry: Methods and Applications
  (2021).

\bibitem[Ter81]{terao1981generalized-im}
H.~Terao, \emph{Generalized exponents of a free arrangement of hyperplanes and
  {Shepherd}-{Todd}-{Brieskorn} formula}, Inventiones mathematicae \textbf{63}
  (1981), no.~1, 159--179.

\bibitem[Tor22]{Tor22}
M.~Torielli, \emph{Freeness for multiarrangements of hyperplanes over arbitrary
  fields}, Communications in Algebra \textbf{50} (2022), no.~1, 292--299.

\bibitem[Tsu20]{tsujie2020modular-siagmaa}
S.~Tsujie, \emph{Modular {Construction} of {Free} {Hyperplane} {Arrangements}},
  Symmetry, Integrability and Geometry: Methods and Applications (2020).

\bibitem[TT20]{torielli2020freeness-tejoc}
M.~Torielli and S.~Tsujie, \emph{Freeness of {Hyperplane} {Arrangements}
  between {Boolean} {Arrangements} and {Weyl} {Arrangements} of {Type} {$
  {B}_{\ell} $}}, The Electronic Journal of Combinatorics \textbf{27} (2020),
  no.~3, P3.10.

\bibitem[Wak07]{wakamiko2007exponents-tjom}
A.~Wakamiko, \emph{On the {Exponents} of 2-{Multiarrangements}}, Tokyo Journal
  of Mathematics \textbf{30} (2007), no.~1, 99--116. \MR{MR2328057}

\bibitem[WJ22]{wang2022free-gac}
Z.~Wang and G.~Jiang, \emph{Free {Subarrangements} of {Shi} {Arrangements}},
  Graphs and Combinatorics \textbf{38} (2022), no.~3, 59.

\bibitem[Yos05]{yoshinaga2005freeness-botlms}
M.~Yoshinaga, \emph{On the {Freeness} of 3-{Arrangements}}, Bulletin of the
  London Mathematical Society \textbf{37} (2005), no.~1, 126--134.

\bibitem[Yos14]{yoshinaga2014freeness-adlfdsdtm}
\bysame, \emph{Freeness of hyperplane arrangements and related topics}, Annales
  de la facult{\'e} des sciences de {Toulouse} {Math{\'e}matiques}, vol.~23,
  Universit{\'e} Paul Sabatier, Toulouse, 2014, pp.~483--512.

\bibitem[Zas89]{zaslavsky1989biased}
T.~Zaslavsky, \emph{Biased graphs {I}. {B}ias, balance, and gains}, Journal of
  Combinatorial Theory, Series B \textbf{47} (1989), no.~1, 32--52.

\bibitem[Zas91]{zaslavsky1991biased}
\bysame, \emph{Biased graphs {II}. {T}he three matroids}, Journal of
  Combinatorial Theory, Series B \textbf{51} (1991), no.~1, 46--72.

\bibitem[Zas95]{zaslavsky1995biased}
\bysame, \emph{Biased graphs {III}. {C}hromatic and dichromatic invariants},
  Journal of Combinatorial Theory, Series B \textbf{64} (1995), no.~1, 17--88.

\bibitem[Zas01]{zaslavsky2001supersolvable-ejoc}
\bysame, \emph{Supersolvable {Frame}-matroid and {Graphic}-lift {Lattices}},
  European Journal of Combinatorics \textbf{22} (2001), no.~1, 119--133.

\bibitem[Zas03]{zaslavsky2003biased}
\bysame, \emph{Biased graphs {IV}. {G}eometrical realizations}, Journal of
  Combinatorial Theory, Series B \textbf{89} (2003), no.~2, 231--297.

\bibitem[Zie89]{ziegler1989multiarrangements-sci}
G.~M. Ziegler, \emph{Multiarrangements of hyperplanes and their freeness},
  Singularities ({Iowa} {City}, {IA}, 1986), Contemp. {Math}., vol.~90, Amer.
  Math. Soc., Providence, RI, 1989, pp.~345--359. \MR{1000610}

\end{thebibliography}

\end{document}